\numberwithin{equation}{section}
\newtheorem{theorem}{Theorem}[section]
\newtheorem{lemma}[theorem]{Lemma}
\newtheorem{corollary}[theorem]{Corollary}
\theoremstyle{definition}
\newtheorem{remark}[theorem]{{\bf Remark}}
\newtheorem{definition}[theorem]{Definition}
\newtheorem{problem}[theorem]{Problem}
\newcommand{\BR}{\mathbb{R}}
\newcommand{\bemph}[1]{\textbf{\textit{#1}}}
\newcommand{\midbar}{ \ \big\vert \ }
\newcommand{\p}[1]{\left( #1 \right)}
\newcommand{\set}[1]{\left\{ #1 \right\}}
\newcommand{\modulo}[1]{\left| #1 \right|}
\newcommand{\norm}[1]{\left\| #1 \right\|}
\newcommand{\dif}{\hspace*{0.5mm}d\hspace*{0.25mm}}
\newcommand{\Spin}{\textup{Spin}}
\newcommand{\spin}{\mathfrak{spin}}
\newcommand{\Sc}[1]{\textup{Sc}\p{#1}}
\newcommand{\inner}[1]{\left\langle  #1 \right\rangle }
\crefname{enumi}{}{}
\crefname{enumii}{}{}
\author[I. Beschastnyi]{Ivan Beschastnyi}
\address{(IB)
	Charge de Recherche, Université Côte d'Azur, LJAD / Centre Inria d'Université Côte d'Azur, MCTAO
} \email{ ivan.beschastnyi@inria.fr}
\title[]{The $S$-resolvent estimates for the Dirac operator on hyperbolic and spherical spaces}
\author[F. Colombo]{Fabrizio Colombo}
\address{(FC)
	Politecnico di Milano\\Dipartimento di Matematica\\Via E. Bonardi, 9\\20133
	Milano, Italy}
\email{fabrizio.colombo@polimi.it}
\author[Simão Andrade Lucas]{Simão Andrade Lucas}
\address{(SL)
	Politecnico di Milano\\Dipartimento di Matematica\\Via E. Bonardi, 9\\20133
	Milano, Italy
} \email{simao.lucas@polimi.it}
\author[I. Sabadini]{Irene Sabadini}
\address{(IS)
	Politecnico di Milano\\Dipartimento di Matematica\\Via E. Bonardi, 9\\20133
	Milano, Italy
} \email{irene.sabadini@polimi.it}
\date{}
\begin{document}
\maketitle

\begin{abstract}

This seminal paper marks the beginning of our investigation into on the spectral theory based on  $S$-spectrum applied to the Dirac operator on manifolds.
Specifically, we examine in detail the cases of the Dirac operator $\mathcal{D}_H$  on hyperbolic space and the Dirac operator $\mathcal{D}_S$ on the spherical space, where these operators, and their squares $\mathcal{D}_H^2$ and $\mathcal{D}_S^2$, can be written in a very explicit form.
This fact is very important for the application of the spectral theory on the $S$-spectrum.
In fact, let $T$ denote a (right) linear Clifford operator, the $S$-spectrum is associated with a second-order polynomial in the operator $T$, specifically the operator defined as
$
Q_s(T) := T^2 - 2s_0T + |s|^2.
$
This allows us to associate to the Dirac operator boundary conditions that can be of Dirichlet type but also of Robin-like type.
Moreover, our theory is not limited to Hilbert modules; it is applicable to Banach modules as well. The spectral theory based on the $S$-spectrum has gained increasing attention in recent years, particularly as it aims to provide quaternionic quantum mechanics with a solid mathematical foundation from the perspective of spectral theory.
This theory was extended to Clifford operators, and more recently, the spectral theorem has been adapted to this broader context.
The $S$-spectrum is crucial for defining the so-called $S$-functional calculus for quaternionic and Clifford operators in various forms. This includes bounded as well as unbounded operators, where suitable estimates of sectorial and bi-sectorial type for the $S$-resolvent operator are essential for the convergence of the Dunford integrals in this setting.

\end{abstract}
\vskip 1cm
\par\noindent
 AMS Classification: 47A10, 58J50.
\par\noindent
\noindent {\em Key words}: $S$-spectrum, Dirac operator on hyperbolic space, Dirac operator on the spherical space.

\medskip
\textbf{Acknowledgements:} F. Colombo and I. Sabadini are supported by MUR grant Dipartimento di Eccellenza 2023-2027. I. Beschastnyi was supported by the French government through the France 2030 investment plan managed by the National Research Agency (ANR), as part of the Initiative of Excellence Université Côte d’Azur under reference number ANR-15-IDEX-01

\tableofcontents

\section{Introduction }

This paper represents the onset of our investigation into the spectral theory based on the $S$-spectrum as applied to the Dirac operator on manifolds. In particular, we focus on the Dirac operator in hyperbolic space and spherical space. In these contexts, the Dirac operator  $\mathcal{D}_H$ on hyperbolic space and the Dirac operator $\mathcal{D}_S$ on spherical space, along with their squares $\mathcal{D}_H^2$ and $\mathcal{D}_S^2$, can be expressed in a very explicit form. For these specific cases, we provide explicit estimates that indicate the regions where the $S$-spectrum of Dirac operators, under various boundary conditions, is contained, along with suitable estimates for the $S$-resolvent operator.
More in general in this paper we illustrates the applicability of the new spectral theory based on the $S$-spectrum to the Dirac operator on manifolds, establishing a link between spectral theory and differential geometry, thereby giving rise to the $S$-spectral theory in Riemannian geometry.
Our theory on the $S$-spectrum associated with Dirac operators on manifolds works even when these operators are not assumed to be self-adjoint.

\medskip
The spectral theory, rooted in the concept of the $S$-spectrum and associated with slice hyperholomorphicity, began its development in 2006. This framework reveals that
it is far more than a mere extension of complex operator theory, especially concerning unbounded operators (see \cite{RD}). The identification of the $S$-spectrum was accomplished using techniques from hypercomplex analysis, with initial insights arising from quaternionic quantum mechanics, see \cite{BF},  and for more details see \cite{adler} and  \cite{JONAQSTUD}.

An exhaustive description on the discovery of the $S$-spectrum can be found in the introduction of the book \cite{CGK}. The spectral theory of the $S$-spectrum was developed in several papers and it is systematically organized in the books \cite{ACS2016,AlpayColSab2020,AlpayColSab2024,FJBOOK,CGK,ColomboSabadiniStruppa2011}, where one can find also the development of the theory of slice hyperholomorphic functions. It is also important to point out that the quaternionic and the Clifford spectral theorems are based on the notion of $S$-spectrum as well, see \cite{ACK,ColKim}. However, this spectral theory goes beyond the quaternionic and the Clifford setting, as shown in \cite{ADVCGKS}.
As an example we mention that in the context of Clifford algebras, the theory has been recently applied to investigate the spectral properties of the gradient operator with nonconstant coefficients in $n$-dimensions
\begin{equation}\label{Eq_T}
T = \sum_{i=1}^n e_i a_i(x) \frac{\partial}{\partial x_i}, \qquad x \in \Omega,
\end{equation}
and associated with various boundary conditions, see \cite{Gradient}.
Here, $e_1, \ldots, e_n$ represent the imaginary units of the Clifford algebra
$\mathbb{R}_n$, and the coefficients $a_1, \ldots, a_n$ are defined on a
 smooth domain $\Omega \subseteq \mathbb{R}^n$. We require that the coefficients $a_1, \ldots, a_n: \overline{\Omega} \to (0, \infty)$  belong to $C^1(\overline{\Omega})$ and satisfy the appropriate bounds.
Unlike in classical complex spectral theory, the definition of the $S$-spectrum of $T$ is tied to the invertibility of the second-order operator defined as
\begin{equation}\label{Eq_Qs}
Q_s(T) := T^2 - 2s_0 T + |s|^2,
\end{equation}
where $s=s_0+s_1e_1+\dots+s_ne_n$ is a paravector, $s_0$ its real part and $|s|^2$ is its module.
This fact is a significant distinction from classical operator theory.

\medskip
Let $V$ be a Clifford module and by $\mathcal{B}(V)$ the denote the space of all bounded right-linear operators $T:V\to V$. The concept of the $S$-spectrum is grounded in the sum of an analogue of the resolvent series expansion, where the bounded operator $T$ and the paravector $s$ do not necessarily commute.
Specifically, we have:
\begin{subequations}
\begin{align}
\sum_{n=0}^\infty T^ns^{-1-n}=(T^2-2s_0T+|s|^2)^{-1} (\overline{s}-T),\qquad|s|>\Vert T\Vert, \label{Eq_SL_series} \\
\sum_{n=0}^\infty s^{-1-n}T^n=(\overline{s}-T)(T^2-2s_0T+|s|^2)^{-1},\qquad|s|>\Vert T\Vert, \label{Eq_SR_series}
\end{align}
\end{subequations}
for $s\in\mathbb{R}_n$ of the form $s=s_0+s_1e_1+\dots+s_ne_n$.
That is, for every $T\in\mathcal{B}(V)$, the \textit{$S$-spectrum} and the \textit{$S$-resolvent set} of $T$ are defined as
\begin{equation*}
\rho_S(T):=\{s\in\mathbb{R}^{n+1} \ \ |\ \  Q_s(T)^{-1}\in\mathcal{B}(V)\}\qquad\text{and}\qquad\sigma_S(T):=\mathbb{R}^{n+1}\setminus\rho_S(T).
\end{equation*}
 For every $s\in\rho_S(T)$, the inverse $Q_s(T)^{-1}$ is called the \textit{pseudo $S$-resolvent operator}.
A second main difference between complex and $S$-spectral theory lies in the fact that
\begin{equation*}
 s\mapsto Q_s(T)^{-1}\ \ {\rm for}\  s\in \rho_S(T),
\end{equation*}
is not a (operator valued) slice hyperholomorphic function. The operator $Q_s(T)$ is used for the definition of the $S$-spectrum, while the \textit{left} and the \textit{right $S$-resolvent operators} are motivated by the series representations \eqref{Eq_SL_series} and \eqref{Eq_SR_series}
\begin{equation*}
S_L^{-1}(s,T):=Q_s(T)^{-1}\overline{s}-TQ_s(T)^{-1}\qquad\text{and}\qquad S_R^{-1}(s,T):=(\overline{s}-T)Q_s(T)^{-1},\qquad s\in\rho_S(T).
\end{equation*}
These resolvent operators now preserve the slice hyperholomorphicity and are used in the definition of the $S$-functional calculus for Clifford operators.

 Observe that for unbounded operators $T: D(T)\subset X\to X$ we have that
 $ Q_s(T):  D(Q_s(T))\subset D(T)\to X$ so that the $S$-resolvent operators are defined on $X$.

\medskip

Clifford operators that are sectorial, bi-sectorial or strip-operators, can be defined in analogy to complex operators, but in this case the estimates associated with the $S$-resolvents are
\begin{equation*}
\Vert S_L^{-1}(s,T)\Vert\leq\frac{C}{|s|}\qquad\text{and}\qquad\Vert S_R^{-1}(s,T)\Vert\leq\frac{C}{|s|},
\end{equation*}
where the parameter $s$ belongs to suitable sectorial, bi-sectorial or strip-type subsets of $\rho_S(T)$.

\medskip
{\em The content of the paper and description of the main results.}
This paper addresses potential interests for a dual audience of researchers: those working in spectral theory and those interested in differential geometry.
In Section \ref{sec_Lax_Milgram} we introduce the functional setting in Clifford modules over $\mathbb{R}_n$ on which we work while in Section \ref{FACTONMANIFOLD}, we recall the basic results on the Dirac operator on Riemannian manifolds
Specifically, we revisit the essential concepts related to the description of Dirac operators on manifolds
and the fact that one can  write the Dirac operator in terms of the covariant derivative $\nabla_{ X}^\tau$. {\color{black} Let $U$ be a coordinate neighborhood of a Riemannian manifold $(M,g)$ of dimension $n$. In this neighborhood we can find an orthonormal frame of vector fields $E_1,\dots,E_n$. Then the Dirac operator on $(U,g)$ can be written as a differential operator on $\mathcal{C}^\infty(U,\mathcal{H})$
\begin{equation}
\mathcal{D} = \sum_{i=1}^ne_i\nabla^\tau_{E_i}.
\end{equation}
}
In order to apply the spectral theory on the $S$-spectrum we need
to recall that $\mathcal{D}^2$ is given as the sum of a second-order Laplacian and a curvature operator, in accordance with the Bochner-Weitzenb\"ock theorem.
By Theorem \ref{QUADRATODID} on $\mathcal{C}^\infty(U,\mathcal{H})$
   the square of the Dirac operator is given by (\ref{Dquadrato}).

As we mentioned, the Dirac operator on hyperbolic and spherical spaces takes on a particularly elegant form, and the squares of these operators given by (\ref{Dquadrato}) also have a very nice expression.

\medskip
Section \ref{SubSec:Hyperbolic} we study Dirac operator on the hyperbolic space with Dirichlet boundary conditions.
In order to illustrate our results,
choosing a representation $d\tau$, see Lemma \ref{dtaurep}, the Dirac operator on the hyperbolic space becomes:
\begin{equation}
		\mathcal{D}_H = \sum_{i=1}^{n-1} e_iy\partial_{x_i} - \alpha_ne_{n} + e_{n}y\partial_y,\ \ \ {\rm where} \ \ \   \alpha_n:= \frac{n-1}{2}
	\end{equation}
and $\mathcal{D}_H^2$ turns out to be
		\begin{equation}
			\mathcal{D}_H^2 = -y^2\Delta + e_{n}\mathcal{D}_H - \beta_n,\ \ \ {\rm where} \ \ \  \Delta:=\sum_{i=1}^{n-1}\partial_{x_i}^2+\partial_y^2,\ \ \ {\rm and} \ \ \ \beta_n :=\alpha_n+\alpha_n^2.
		\end{equation}
Since $\mathcal{D}_H$ and $\mathcal{D}_H^2$ on $\BR^{n}_+ := \set{(x,y) \midbar x=\p{x_1,\cdots,x_{n-1}}\in\BR^{n-1}, \quad y>0}$ have unbounded coefficients,  we consider domains $\Omega\subset \mathbb{R}_+^{n}$ for which there exist $0< m<M<+\infty$ such that
$\inf_{(x,y)\in\Omega}y=m$ and $\sup_{(x,y)\in\Omega}y=M$.
We study the
elliptic boundary value problem
\begin{equation}\label{ellBVP}
		\begin{cases}
			Q_s(\mathcal{D}_H)F = f, & \\ F\vert_{\partial \Omega} = 0
		\end{cases}
	\end{equation}
	and the invertibility of the operator $Q_s(\mathcal{D}_H)=\mathcal{D}_H^2-2s_0\mathcal{D}_H+|s|^2$ in the weak sense. In Theorem \ref{Th:Main_Hyper}, we prove that the invertibility is assured when the spectral parameter $s\in \mathbb{R}^{n+1}$ satisfies the two
explicit inequalities, given by (\ref{CONDWITHPUTCP}),
\begin{align*}
\Big||s|^2 - \beta_n\Big| - \alpha_n\sqrt{1+ 4s_0^2}+m^2>0,
\end{align*}
and
\begin{align*}
&
4m^2\Big||s|^2 - \beta_n\Big|-4nM^2 s_0^2
 - 4\Big(m^2\alpha_n +M^2(1+\alpha_n)\sqrt{n}\Big)\sqrt{1+4s_0^2}\nonumber
>M^2\Big(4(1+\alpha_n)^2+n\Big),
\end{align*}
for $|s|^2 - \beta_n\not=0$, where $\beta_n$ and $\alpha_n$  the constants $M>m>0$ are given above.
		Thanks to those estimates, for every $f\in L^2(\Omega)$, there exists a unique $F_f\in H_0^1(\Omega)$ such that
		\begin{equation}
			q_s(F_f,G) = \inner{f,G}_2, \qquad \text{for all } G\in H_0^1(\Omega)
		\end{equation}
where $q_s(F_f,G)$ is the  sesquilinear form, given by (\ref{FORMWITHSCALPROD}), associated with the boundary value problem (\ref{ellBVP}).
		Moreover, this solution satisfies the bounds
		\begin{equation*}
			\|Q_s(\mathcal{D}_H)^{-1}f\|_2\leq \frac{1}{K_{n,m,M}(s)}\, \norm{f}_2,
		\ \ \ {\rm and}\ \
			\|Q_s(\mathcal{D}_H)^{-1}f\|_D\leq \frac{1 }{K_{n,m,M}(s)}\,\norm{f}_2,
		\end{equation*}
where  $K_{n,m,M}(s)$ is given by (\ref{KAPPAnm}) and $\|\cdot\|_2$ is the $L^2$ norm of the function and  $\|\cdot\|_D$ is the $L^2$ norm of the derivatives.
Moreover in Corollary \ref{STIMASRESOL} we deduce the estimate  of the right $S$-resolvent operator
for $\mathcal{D}_H$ as a direct consequence of estimates (\ref{Eq:norm_est_hyper}). Precisely, we have
$$
\|S_R^{-1}(s,\mathcal{D}_H)f\|_2\leq \frac{1}{K_{n,m,M}(s)} \Big(|s|+\alpha_n+ M\sqrt{n} \Big)\,\,\norm{f}_2
$$
for $s\in \mathbb{R}^{n+1}$ satisfies the two explicit inequalities reported above given in (\ref{CONDWITHPUTCP}).

\medskip
In Section \ref{sec_Robin_HYPERBOLIC}
we give  the weak formulation of the spectral problem $Q_s(\mathcal{D}_H))F=f$ under suitable Robin-like boundary conditions for the operator $\mathcal{D}_H$, more precisely we investigate the problem
\begin{equation}
\begin{cases}
Q_s(\mathcal{D}_H)F = f \qquad\text{on } \Omega,
&
\\
\sum_{i=1}^{n-1}\nu_iy^2\partial_{x_i}F+\nu_ny^2\partial_yF +bF=0,\qquad\text{on }\partial\Omega
\end{cases}
\end{equation}
where $b$ is a given real valued function such that $b\in L^\infty(\partial\Omega)$.

In Theorem \ref{THFORTHEHYPERB} we prove that for $s\in\mathbb{R}^{n+1}$ varying in the suitable set described by condition \eqref{SECODESTROBPRIMEH} the problem admits a unique solution $F_f\in H^1(\Omega)$ for  every $f\in L^2(\Omega)$.

\medskip
In the second part of the paper we consider the Dirac operator $\mathcal{D}_S$ on the spherical space, which is computed  in \cite{DiracHarm} page 275, and by choosing the representation given by the left multiplication is given by
$$
\mathcal{D}_S=(1+|x|^2)\sum_{i=1}^ne_i\partial_{x_i}-nx.
$$

In Section \ref{sphericalDirichlet}, we study the Dirac operator on spherical space with Dirichlet boundary conditions. More precisely, we first examine the elliptic boundary value problem of the form \eqref{ellBVP} for the operator $ \mathcal{D}_S $. Using some technical lemmas, we prove in Theorem \ref{THFORTHESPHER} that, when $s \in \mathbb{R}^{n+1}$ varies in a suitable set, the boundary value problem for the Dirac operator on spherical spaces admits a unique solution $ F_f \in H^1_0(\Omega) $ for any $ f \in L^2(\Omega)$. Additionally, as we did for the hyperbolic space, we discuss the bounds satisfied by the solution $F_f$.

 Finally in Section \ref{sphericalRobin} we consider the
 Dirac operator on the spherical space with the associated Robin-like boundary conditions,
see Problem \ref{RobinSphere}. In Theorem \ref{ROBENTHFORTHESPHER} we prove conditions under which we have existence and uniqueness of the solution in $H^1(\Omega)$ for any datum in $L^2(\Omega)$.

We conclude the paper with Section \ref{sec_Concluding_remarks} that contains some
 concluding remarks on the sectorial and bi-sectorial estimates obtained in this paper in order to give a panorama on the related papers on this topic.

\section{Functional setting in Clifford modules over $\mathbb{R}_n$}\label{sec_Lax_Milgram}

Clifford algebras are a class of algebras that generalize the concept of complex numbers and quaternions. They are formed from a vector space equipped with a quadratic form, where the key feature is the relation between the basis vectors, typically expressed through anticommutation relations. In the following we denote by  $\mathbb{R}_n$, for $n\in\mathbb{N}$, $n\geq 2$ the
Clifford algebra generated by $n$ \textit{imaginary units} $e_1,\dots,e_n$ which satisfy the relations
\begin{equation*}
e_i^2=-1\qquad\text{and}\qquad e_ie_j=-e_je_i,\qquad i\neq j\in\{1,\dots,n\}.
\end{equation*}
More precisely, $\mathbb{R}_n$ is given by
\begin{equation}
\mathbb{R}_n:=\left\{\sum\nolimits_{A\in\mathcal{A}}x_Ae_A\ \  |\ \  x_A\in\mathbb{R},\,A\in\mathcal{A}\right\},
\end{equation}
using the index set
\begin{equation*}
\mathcal{A}:=\{(i_1,\dots,i_r)\  | \ r\in\{0,\dots,n\},\ \ 1\leq i_1<\dots<i_r\leq n\},
\end{equation*}
and the \textit{basis vectors} $e_A:=e_{i_1}\dots e_{i_r}$. Note, for $A=\emptyset$ the empty product of imaginary units is the real number $e_\emptyset:=1$. Moreover, we will consider the set of all \textit{paravectors}
\begin{equation*}
\mathbb{R}^{n+1}:=\left\{x_0+\sum\nolimits_{i=1}^nx_ie_i \ \ |\ \  x_0,x_1,\dots,x_n\in\mathbb{R}\right\}.
\end{equation*}
For any Clifford number $x\in\mathbb{R}_n$, we define
\begin{align*}
{\rm Sc}(x)&:=x_\emptyset=x_0, && \textit{(scalar part)} \\
\overline{x}&:=\sum\nolimits_{A\in\mathcal{A}}x_A\overline{e_A}, && \textit{(conjugate)} \\
|x|&:=\Big(\sum\nolimits_{A\in\mathcal{A}}|x_A|^2\Big)^{\frac{1}{2}}=({\rm Sc}(x\overline{x}))^{\frac{1}{2}}=({\rm Sc}(\overline{x}x))^{\frac{1}{2}}, && \textit{(norm)}
\end{align*}
where $\overline{e_A}=\overline{e_{i_r}}\dots\overline{e_{i_1}}$ and $\overline{e_i}=-e_i$. It is now obvious that for any Clifford number $x\in\mathbb{R}_n$ one can calculate its coefficients $x_A$ by
\begin{equation}\label{Eq_xA}
{\rm Sc}(x\overline{e_A})={\rm Sc}\Big(\sum\nolimits_{B\in\mathcal{A}}x_Be_B\overline{e_A}\Big)=\sum\nolimits_{B\in\mathcal{A}}x_B{\rm Sc}(e_B\overline{e_A})=x_A,
\end{equation}
where in the last equation we used that
\begin{equation}\label{Eq_SceBeA}
{\rm Sc}(e_B\overline{e_A})=\begin{cases} 1, & \text{if }B=A, \\ 0, & \text{if }B\neq A. \end{cases}
\end{equation}
\subsection{Clifford Hilbert modules}

Clifford (Hilbert) modules arise in the context of Clifford algebras. Specifically, a Clifford module is a vector space equipped with a linear action of a Clifford algebra.
This action is typically defined such that the elements of the Clifford algebra act on the module in a way that respects the algebra's multiplication rules.
Clifford modules are particularly important to provide a framework for studying the representations of the Dirac operator.
For any real Hilbert space $V_\mathbb{R}$ with inner product $\langle\cdot,\cdot\rangle_\mathbb{R}$ and norm $\Vert\cdot\Vert_\mathbb{R}^2=\langle\cdot,\cdot\rangle_\mathbb{R}$, we define the \textit{Clifford module}
\begin{equation*}
V:=\left\{\sum\nolimits_{A\in\mathcal{A}}F_A\otimes e_A\ \  |\ \  F_A\in V_\mathbb{R}\right\}.
\end{equation*}
For any vector $F=\sum_{A\in\mathcal{A}}F_A\otimes e_A\in V$ and any Clifford number $x=\sum_{A\in\mathcal{A}}x_Ae_A\in\mathbb{R}_n$, we equip this space with a left and a right scalar multiplication
\begin{subequations}
\begin{align}
xF:=&\sum\nolimits_{A,B\in\mathcal{A}}(x_BF_A)\otimes(e_Be_A), && \textit{(left-multiplication)} \\
Fx:=&\sum\nolimits_{A,B\in\mathcal{A}}(F_Ax_B)\otimes(e_Ae_B). && \textit{(right-multiplication)} \label{Eq_Right_multiplication}
\end{align}
\end{subequations}
Moreover, we define the \textit{inner product}
\begin{equation}\label{Eq_Inner_product}
\langle F,G\rangle:=\sum\nolimits_{A,B\in\mathcal{A}}\langle F_A,G_B\rangle_\mathbb{R}\,\overline{e_A}e_B,\qquad F,G\in V,
\end{equation}
and the \textit{norm}
\begin{equation}\label{Eq_Norm}
\Vert F\Vert:=\Big(\sum\nolimits_{A\in\mathcal{A}}\Vert F_A\Vert_\mathbb{R}^2\Big)^{\frac{1}{2}}=\big({\rm Sc}\langle F,F\rangle\big)^{\frac{1}{2}},\qquad F\in V.
\end{equation}
The sesquilinear form \eqref{Eq_Inner_product} is clearly right-linear in the second, and right-antilinear in the first argument, i.e. for every $F,G,H\in V$, $x\in\mathbb{R}_n$, there holds
\begin{align*}
\langle F,G+H\rangle&=\langle F,G\rangle+\langle F,H\rangle, && \langle G,Hx\rangle=\langle G,H\rangle x, \\
\langle F+H,G\rangle&=\langle F,G\rangle+\langle H,G\rangle, && \langle Gx,H\rangle=\overline{x}\langle G,H\rangle.
\end{align*}
Moreover, there also holds
\begin{equation}\label{Eq_Inner_product_property}
\langle G,xH\rangle=\langle\overline{x}G,H\rangle.
\end{equation}
Next we recall some more basic properties of the inner product \eqref{Eq_Inner_product} and the norm \eqref{Eq_Norm}.

\begin{lemma}\label{lem_Properties}
For every $F,G\in V$, $x\in\mathbb{R}_n$, we have:

\begin{enumerate}
\item[i)] $\Vert Fx\Vert\leq 2^{\frac{n}{2}}|x|\Vert F\Vert$
\qquad and\qquad$\Vert xF\Vert\leq 2^{\frac{n}{2}}|x|\Vert F\Vert$,
\item[ii)] $\Vert Fx\Vert=\Vert xF\Vert=|x|\Vert F\Vert$
 for $x\in\mathbb{R}^{n+1}$,
\item[iii)] $|\langle F,G\rangle|\leq 2^{\frac{n}{2}}\Vert F\Vert\,\Vert G\Vert$,
\item[iv)] $|{\rm Sc}\langle F,G\rangle|\leq\Vert F\Vert\,\Vert G\Vert$.
\end{enumerate}
\end{lemma}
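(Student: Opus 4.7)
The plan is to reduce all four bounds to computations in the underlying real Hilbert space $V_\mathbb{R}$ by expanding Clifford-valued expressions in the basis $\{e_A\}_{A\in\mathcal{A}}$ and invoking the standard Cauchy--Schwarz inequality. The decisive combinatorial input is that the product of any two basis vectors is again a signed basis vector, $e_Ae_B=\varepsilon_{A,B}\,e_{A\triangle B}$ and likewise $\overline{e_A}e_B=\varepsilon'_{A,B}\,e_{A\triangle B}$, where $\triangle$ denotes symmetric difference and $\varepsilon_{A,B},\varepsilon'_{A,B}\in\{\pm 1\}$, together with the orthogonality relation ${\rm Sc}(\overline{e_A}e_B)=\delta_{A,B}$ already established in \eqref{Eq_SceBeA}.

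For (i) and (iii) I would use a two-level Cauchy--Schwarz. Writing $Fx=\sum_D (Fx)_D\otimes e_D$ and collecting by basis element gives $(Fx)_D=\sum_A \varepsilon_{A,A\triangle D}\,F_A\,x_{A\triangle D}$, since for fixed $D$ the relation $A\triangle B=D$ uniquely determines $B$ from $A$. A single application of the real Cauchy--Schwarz then yields $\|(Fx)_D\|_\mathbb{R}^2\le(\sum_A |x_{A\triangle D}|^2)(\sum_A \|F_A\|_\mathbb{R}^2)=|x|^2\|F\|^2$, where the first factor equals $|x|^2$ because $A\mapsto A\triangle D$ is a bijection of $\mathcal{A}$; summing over the $2^n$ indices $D$ delivers (i), and the proof for $xF$ is identical. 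The same two-step argument applied to the $e_D$-component of $\langle F,G\rangle$, with $|\langle F_A,G_B\rangle_\mathbb{R}|\le\|F_A\|_\mathbb{R}\|G_B\|_\mathbb{R}$ in place of $|F_A\,x_B|_\mathbb{R}\le|x_B|\|F_A\|_\mathbb{R}$, yields (iii).

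For (ii) I would use the adjoint-type relations preceding the lemma to write $\|Fx\|^2={\rm Sc}\langle Fx,Fx\rangle={\rm Sc}(\overline{x}\,\langle F,F\rangle\,x)$ and then invoke the trace-like cyclic identity ${\rm Sc}(ab)={\rm Sc}(ba)$ in $\mathbb{R}_n$; this is the single conceptual step of the proof, and it follows by bilinearity from the observation that ${\rm Sc}(e_Ae_B)$ is nonzero only when $A=B$, in which case it equals $e_A^2$ symmetrically in $A$ and $B$. After cycling one obtains ${\rm Sc}(\langle F,F\rangle\,x\overline{x})$, and since for a paravector $x\in\mathbb{R}^{n+1}$ the product $x\overline{x}=|x|^2\in\mathbb{R}$ is a real scalar, the right-hand side collapses to $|x|^2\,{\rm Sc}\langle F,F\rangle=|x|^2\|F\|^2$; the $xF$ identity is symmetric, via $\langle G,xH\rangle=\langle\overline{x}G,H\rangle$. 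Finally, (iv) follows at once from ${\rm Sc}(\overline{e_A}e_B)=\delta_{A,B}$: expansion of the inner product gives ${\rm Sc}\langle F,G\rangle=\sum_A\langle F_A,G_A\rangle_\mathbb{R}$, and two applications of the real Cauchy--Schwarz yield the sharp bound $\|F\|\|G\|$ with no $2^{n/2}$ loss. The only genuine subtlety in the lemma is the cyclic property used for (ii); everything else is a straightforward repackaging of the Cauchy--Schwarz inequality in $V_\mathbb{R}$.
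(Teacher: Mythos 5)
The paper states Lemma~\ref{lem_Properties} without proof, so there is no in-text argument to compare against; your proof is correct and complete. The two-level Cauchy--Schwarz for (i) and (iii), exploiting that $A\mapsto A\triangle D$ is a bijection of $\mathcal{A}$, does yield exactly the $2^{n/2}$ factor from summing over the $2^n$ components; the computation $\|Fx\|^2=\mathrm{Sc}\!\left(\overline{x}\langle F,F\rangle x\right)$ followed by the cyclicity $\mathrm{Sc}(ab)=\mathrm{Sc}(ba)$ (which you correctly justify on basis monomials via $\mathrm{Sc}(e_Ae_B)=\mathrm{Sc}(e_Be_A)$) and $x\overline{x}=\overline{x}x=|x|^2$ for paravectors cleanly gives the equality in (ii); and (iv) follows immediately from the orthogonality relation \eqref{Eq_SceBeA} plus two applications of Cauchy--Schwarz in $V_{\mathbb R}$ and $\ell^2(\mathcal{A})$. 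No gaps.
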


\begin{remark}
In the vector space $V$, we consider two inner products that serve different purposes.
The first inner product, $\langle \cdot,\cdot\rangle: V \times V \to \mathbb{R}_n $ is the one in (\ref{Eq_Inner_product}), while the second is its scalar part, namely $\mathrm{Sc}\langle \cdot,\cdot\rangle: V \times V \to \mathbb{R}$.
The inner product $\langle \cdot,\cdot\rangle$ is used in the
 Riesz's representation theorem, that is,
 every linear and continuous functional $\varphi: V \to \mathbb{R}_n$  is
  represented by an element $F_\varphi \in V$ such that
  $\varphi(G) = \langle F_\varphi, G \rangle$ for every $G\in V$.
The second inner product is important since, unlike the first one, it gives rise to a norm in the classical sense.
\end{remark}

We now recall the Lax-Milgram lemma for Clifford modules. This lemma is well known in complex functional analysis while its Clifford algebra version can be found in \cite{Gradient}.
We will need this result in the later sections to prove the unique solvability of the spectral problem for the $S$-spectrum and, in particular, to detect regions of $S$-resolvent set and related estimates of the $S$-resolvent operator.

\begin{lemma}[Lax-Milgram lemma]\label{lem_Lax_Milgram}
Let $q:V\times V\to\mathbb{R}_n$ be right-linear in the second and right-antilinear in the first argument. Moreover, we assume that there exist constants $C\geq 0$ and $\kappa>0$, such that
\begin{equation*}
{\rm Sc} (q(F,F))\geq\kappa\Vert F\Vert^2\qquad\text{and}\qquad|q(F,G)|\leq C\Vert F\Vert\Vert G\Vert,\qquad F,G\in V.
\end{equation*}
Then, for every right-linear bounded functional $\varphi:V\to\mathbb{R}_n$, there exists a unique vector $F_\varphi\in V$, satisfying
\begin{equation*}
\Vert F_\varphi\Vert\leq\frac{1}{\kappa}\Vert{\rm Sc}(\varphi)\Vert\qquad\text{and}\qquad q(F_\varphi,G)=\varphi(G),\qquad\text{for all }G\in V.
\end{equation*}
\end{lemma}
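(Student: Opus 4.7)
The plan is to follow the standard Hilbert-space Lax–Milgram argument, adapted to the Clifford-valued inner product $\langle\cdot,\cdot\rangle$ and its scalar part $\mathrm{Sc}\langle\cdot,\cdot\rangle$. First, for each fixed $F\in V$, the map $G\mapsto q(F,G)$ is right-linear and bounded (by $C\Vert F\Vert$), so the Riesz representation theorem recalled in the remark yields a unique $AF\in V$ with $q(F,G)=\langle AF,G\rangle$ for every $G\in V$. Using that $q$ is additive in the first argument and right-antilinear (so $q(Fx,G)=\overline{x}q(F,G)$), together with the analogous property $\langle Gx,H\rangle=\overline{x}\langle G,H\rangle$ of the inner product, uniqueness in Riesz gives $A(F_1+F_2)=AF_1+AF_2$ and $A(Fx)=(AF)x$, so $A:V\to V$ is a right-linear operator.

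Next I would establish the three structural properties of $A$. Boundedness: $\Vert AF\Vert^2=\mathrm{Sc}\langle AF,AF\rangle=\mathrm{Sc}\,q(F,AF)\leq |q(F,AF)|\leq C\Vert F\Vert\,\Vert AF\Vert$, giving $\Vert AF\Vert\leq C\Vert F\Vert$. Coercivity/injectivity: combining $\mathrm{Sc}\,q(F,F)=\mathrm{Sc}\langle AF,F\rangle$ with Lemma \ref{lem_Properties}(iv), we obtain
\begin{equation*}
\kappa\Vert F\Vert^2\leq\mathrm{Sc}\,q(F,F)=\mathrm{Sc}\langle AF,F\rangle\leq\Vert AF\Vert\,\Vert F\Vert,
\end{equation*}
so $\Vert AF\Vert\geq\kappa\Vert F\Vert$. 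This lower bound forces injectivity and, since Cauchy sequences in $\mathrm{ran}\,A$ pull back to Cauchy sequences in $V$, also closedness of $\mathrm{ran}\,A$.

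Surjectivity is the step where one must be most careful, and it is the point I expect to be the main technical subtlety. Since $(V,\mathrm{Sc}\langle\cdot,\cdot\rangle)$ is a real Hilbert space and $\mathrm{ran}\,A$ is norm-closed, if $\mathrm{ran}\,A$ were a proper subspace we could pick $G_0\neq 0$ with $\mathrm{Sc}\langle AF,G_0\rangle=0$ for every $F\in V$. Choosing $F=G_0$ and invoking coercivity gives $0=\mathrm{Sc}\,q(G_0,G_0)\geq\kappa\Vert G_0\Vert^2>0$, a contradiction; hence $A$ is surjective. Given any bounded right-linear $\varphi:V\to\mathbb{R}_n$, Riesz provides $H_\varphi\in V$ with $\varphi(G)=\langle H_\varphi,G\rangle$, and we set $F_\varphi:=A^{-1}H_\varphi$; then $q(F_\varphi,G)=\langle AF_\varphi,G\rangle=\varphi(G)$.

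Finally, the quantitative bound follows directly from coercivity applied to the solution itself:
\begin{equation*}
\kappa\Vert F_\varphi\Vert^2\leq\mathrm{Sc}\,q(F_\varphi,F_\varphi)=\mathrm{Sc}\,\varphi(F_\varphi)\leq\Vert\mathrm{Sc}\,\varphi\Vert\,\Vert F_\varphi\Vert,
\end{equation*}
yielding $\Vert F_\varphi\Vert\leq\kappa^{-1}\Vert\mathrm{Sc}\,\varphi\Vert$. Uniqueness is immediate: if two solutions $F_1,F_2$ existed, subtracting and testing against $G=F_1-F_2$ gives $\kappa\Vert F_1-F_2\Vert^2\leq\mathrm{Sc}\,q(F_1-F_2,F_1-F_2)=0$. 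The only nonstandard element of the argument is the systematic bookkeeping of which scalar (real vs. $\mathbb{R}_n$-valued) inner product is invoked at each step; everything else is a formal transcription of the classical proof.
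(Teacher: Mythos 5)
Your proof is correct. The paper itself does not prove this lemma (it states it and cites \cite{Gradient}), but your argument is the standard Lax--Milgram construction: Riesz representation for $\mathbb{R}_n$-valued right-linear functionals produces a right-linear operator $A$ with $q(F,G)=\langle AF,G\rangle$; coercivity gives $\Vert AF\Vert\geq\kappa\Vert F\Vert$, hence injectivity and closed range; the orthogonal-complement argument in the real Hilbert space $(V,\mathrm{Sc}\langle\cdot,\cdot\rangle)$ forces surjectivity; and testing against $F_\varphi$ gives the bound. The Clifford-specific bookkeeping --- using $\langle Gx,H\rangle=\overline{x}\langle G,H\rangle$ together with right-antilinearity of $q$ in the first slot to show $A(Fx)=(AF)x$, and passing from $|q|$ to $\mathrm{Sc}\,q$ via Lemma~\ref{lem_Properties}(iv) --- is all handled correctly.
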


\subsection{Clifford-Sobolev spaces}

The spectral theory on the $S$-spectrum is designed to work in Banach modules; however, the simpler setting to investigate boundary value problems is the $L^2$ spaces.
In this setting the $L^2$ space is composed of square-integrable functions which means that the integral of $Sc(\overline{F(x)}F(x))$, over a specified domain $\Omega \subseteq \mathbb{R}^{n}$, converges to a finite value.
Precisely, let $\Omega \subseteq \mathbb{R}^{n}$ be an open set with smooth boundary.
We define
$$
L^2(\Omega) \coloneqq L^2(\Omega,\mathbb{R}_n) \coloneqq \Bigl\{ F\colon\Omega\to\mathbb{R}_n\ \ | \ \ \int_\Omega Sc(\overline{F(x)}F(x))\,dx<+\infty\Bigr\}
 $$
with the sesquilinear form
$$
 \inner{F,G}_2\coloneqq \inner{F,G}_{L^2(\Omega,\mathbb{R}_n)} \coloneqq \int_\Omega \overline{F(x)}G(x)\,dx,
 $$
where $dx$ is the Lebesgue measure and $F = \sum_A F_A e_A$ and $G= \sum_A G_A e_A$, with $F_A,G_A: \Omega \to \mathbb{R}$.
Clearly
$
L^2(\Omega,\mathbb{R}_n) = L^2(\Omega,\mathbb{R}) \otimes \mathbb{R}_n
$
and it is a Clifford module.
Moreover, we introduce the Sobolev space:
$$
H^1(\Omega,\mathbb{R}_n) = H^1(\Omega,\mathbb{R}) \otimes \mathbb{R}_n
$$
 equipped with the sesquilinear form:
$$
\inner{F,G}_{H^1(\Omega,\mathbb{R}_n)} \coloneqq \inner{F,G}_{H^1(\Omega,\mathbb{R}_n)}
\coloneqq \inner{F,G}_{L^2(\Omega,\mathbb{R}_n)} + \sum_{j=1}^n \inner{\partial_{x_j}F,\partial_{x_j}G}_{L^2}
$$
where $\partial_{x_j} u$  denotes the $j$-th weak derivative of $u$ and so
$H^1(\Omega,\mathbb{R}_n) = H^1(\Omega,\mathbb{R}) \otimes \mathbb{R}_n$ is a Clifford module.

We define $H^1_0(\Omega) \coloneqq H^1_0(\Omega,\mathbb{R}_n)$ to be the closure of $C^\infty_c(\Omega,\mathbb{R}_n)$ in $H^1(\Omega,\mathbb{R}_n)$. As in the scalar case,
 see \cite[Theorem 9.17]{BREZIS}, if we suppose that $\partial\Omega$ is of class $\mathcal{C}^1$ and we assume that
\[
 F\in H^1(\Omega,\mathbb{R}_n)\cap \mathcal{C}(\overline{\Omega},\mathbb{R}_n)
\]
 then the condition $u=0$ on $\partial \Omega$ is equivalent to $F\in H_0^1(\Omega,\mathbb{R}_n)$.
In the general case, for arbitrary functions $F\in H^1(\Omega,\mathbb{R}_n)$, we have to consider the trace operator  $u\mapsto F|_{\partial \Omega}$,
and the space $H_0^1(\Omega,\mathbb{R}_n)$ turns out to be the kernel of the trace operator, i.e.
  \[
 H_0^1(\Omega,\mathbb{R}_n) := \left\{ F\in H^1(\Omega,\mathbb{R}_n): F|_{\partial \Omega} = 0 \right\},
 \]
 where $F|_{\partial \Omega} := \mathrm{tr}\, F$ is the Dirichlet trace, also denoted by $\tau_D$,
  has to be understood in the sense of the  trace operator, see \cite[p. 315]{BREZIS}.
Finally,  $H_0^1(\Omega,\mathbb{R}_n)=H_0^1(\Omega,\mathbb{R})\otimes\mathbb{R}_n$ is a subspace of $H^1(\Omega,\mathbb{R}_n)$ that is a Clifford Hilbert module itself.
For the sake of brevity we will also use the notation $L^2$ for $L^2(\Omega,\mathbb{R}_n)$ and similarly  $H^1$ and $H_0^1$ for $H^1(\Omega,\mathbb{R}_n)$ and $H_0^1(\Omega,\mathbb{R}_n)$, respectively.
We finally state two basic lemmas, which will be crucial throughout the paper, namely the Sobolev and the Poincar\'e inequality, for more details see \cite{Gradient}.
In order to state these inequalities we will need for every function $F\in H^1(\Omega)$ the \textit{Sobolev-seminorm}
\begin{equation*}
\Vert F\Vert_D:=\bigg(\sum_{i=1}^n\Big\Vert
\partial_{x_i}F\Big\Vert_{L^2(\Omega,\mathbb{R}_n)}^2\bigg)^{1/2}.
\end{equation*}
\begin{lemma}[Poincar\'e inequality]\label{lem_Poincare_inequality}\cite[Corollary 9.19 \& Remark 21]{BREZIS}
Let $\Omega\subset\mathbb{R}^n$ be open and either bounded in one direction or of finite measure. Then there exists a constant $C_P(\Omega)>0$, such that
\begin{equation}\label{Eq_Poincare_inequality}
\Vert F\Vert_{L^2}\leq C_P(\Omega)\Vert F\Vert_D,\qquad F\in H_0^1(\Omega).
\end{equation}
\end{lemma}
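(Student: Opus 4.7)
The plan is to reduce the Clifford-valued Poincar\'e inequality to the classical scalar Poincar\'e inequality via the tensor product structure $H^1_0(\Omega,\mathbb{R}_n) = H^1_0(\Omega,\mathbb{R}) \otimes \mathbb{R}_n$, exploiting the fact that both the $L^2$-norm and the Sobolev-seminorm decompose as a sum of squared scalar component norms.

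First, I would write $F = \sum_{A\in\mathcal{A}} F_A \otimes e_A$ with each real-valued component $F_A \in H^1_0(\Omega,\mathbb{R})$; this component-wise membership follows from the definition of $H^1_0(\Omega,\mathbb{R}_n)$ as the closure of $C^\infty_c(\Omega,\mathbb{R}_n) = C^\infty_c(\Omega,\mathbb{R})\otimes\mathbb{R}_n$ together with the fact that the tensor decomposition is orthogonal with respect to ${\rm Sc}\langle\cdot,\cdot\rangle$. Using \eqref{Eq_Norm} and \eqref{Eq_SceBeA}, I would check the identity
\begin{equation*}
\|F\|_{L^2}^2 = \sum_{A\in\mathcal{A}} \|F_A\|_{L^2(\Omega,\mathbb{R})}^2,
\end{equation*}
and, since the weak derivative satisfies $\partial_{x_i} F = \sum_{A} (\partial_{x_i} F_A)\otimes e_A$ component-wise, the analogous identity
\begin{equation*}
\|F\|_D^2 = \sum_{i=1}^n \|\partial_{x_i} F\|_{L^2}^2 = \sum_{A\in\mathcal{A}} \sum_{i=1}^n \|\partial_{x_i} F_A\|_{L^2(\Omega,\mathbb{R})}^2.
\end{equation*}

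Next, I would invoke the classical scalar Poincar\'e inequality (Corollary 9.19 and Remark 21 in Brezis) on each real component: under the hypothesis that $\Omega$ is bounded in one direction or of finite measure, there exists a constant $C_P(\Omega)>0$ depending only on $\Omega$ such that for every $A \in \mathcal{A}$
\begin{equation*}
\|F_A\|_{L^2(\Omega,\mathbb{R})}^2 \leq C_P(\Omega)^2 \sum_{i=1}^n \|\partial_{x_i} F_A\|_{L^2(\Omega,\mathbb{R})}^2.
\end{equation*}
Summing this inequality over the finite index set $\mathcal{A}$ and applying the two identities above yields $\|F\|_{L^2}^2 \leq C_P(\Omega)^2 \|F\|_D^2$, which after taking square roots gives \eqref{Eq_Poincare_inequality}.

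There is no genuine obstacle here: the only point requiring care is verifying that the tensor decomposition commutes with the closure operation defining $H^1_0$ and with taking weak derivatives, so that each scalar component $F_A$ inherits membership in $H^1_0(\Omega,\mathbb{R})$. Once this compatibility is in place, the result is an immediate diagonal consequence of the classical inequality, and the same constant $C_P(\Omega)$ from the scalar case serves in the Clifford setting.
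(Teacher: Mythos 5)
Your proof is correct and is exactly the natural argument behind this lemma: the paper itself gives no proof, merely citing Brezis (for the scalar case) and referring to \cite{Gradient} for the Clifford extension, and the extension to $H^1_0(\Omega,\mathbb{R}_n) = H^1_0(\Omega,\mathbb{R})\otimes\mathbb{R}_n$ is precisely the componentwise reduction you describe. Your verification that both $\Vert\cdot\Vert_{L^2}$ and $\Vert\cdot\Vert_D$ split as sums of squares over the finite index set $\mathcal{A}$, together with the observation that the same constant $C_P(\Omega)$ from the scalar case carries over, is the intended argument and needs no change.
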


\section{Basic results on the Dirac operator on manifolds}\label{FACTONMANIFOLD}

In this section we collect the necessary concepts of the  description of Dirac operators on manifolds, for more details on this topic see e.g. \cite{Ni},  and  for the proofs of the theorems stated in this section  see the book \cite{DiracHarm}.
To give a precise definition of the $S$-spectrum of the Dirac operator on $M$, we need to the following steps:
\begin{itemize}
\item
Definition of the Dirac operator $\mathcal{D}$ on $M$,
\item
Identification of $\mathcal{D}^2$ as the sum of a second-order Laplacian and a curvature operator (Bochner-Weitzenb\"ock theorem).
\end{itemize}

The precise expression of the operator $\mathcal{D}^2$ is of crucial importance in order to define the $S$-spectrum because it is associated with the operator
$
Q_s(\mathcal{D}):=\mathcal{D}^2-2s_0\mathcal{D}+|s|^2.
$

	A way to treat functions on a Riemannian manifold $(M,g)$ that take values on a Clifford module (see Definition \ref{Def:Cliff_module}) is through the theory of fiber bundles. One uses the fact that the space of such functions is in bijection with the sections of a vector bundle with such Clifford module as fiber. Moreover, to define a Dirac operator on $(M, g)$, so that when $(M,g)$ is $(\BR^n,g_{\text{eucl}})$ we get the standard Dirac operator in Clifford analysis, one needs several things:
	
	\begin{enumerate}[label=\bemph{(\arabic*)}]
		\item A bundle of Clifford algebras $\mathcal{C}\longrightarrow M$ such that $\mathcal{C}_x\cong\BR_n$, $\forall x\in M$;
		\item A fiberwise injective morphism of vector bundles $\iota: \mathcal{T}^\ast M\hookrightarrow\mathcal{C}$ such that, $\forall x\in M$
		\begin{equation*}
			\set{\iota(u),\iota(v)}_{\mathcal{C}} =-2g(u,v), \quad \forall u,v\in \mathcal{T}_x^\ast M,
		\end{equation*}
       where $g$ is the co-metric on the cotangent bundle;
		\item A bundle of Clifford modules, i.e. a vector bundle $\mathcal{E}\longrightarrow M$, together with a morphism $\mathbf{c}:\mathcal{C}\longrightarrow\textup{End}(\mathcal{E})$ whose restrictions to the fibers are morphism of algebras;
		\item A connection on $\mathcal{E}$.		
	\end{enumerate}

 One way to define a connection exploits the fact that the spin group $\Spin(n)$, seen as a subgroup of the Clifford algebra $\mathbb R_n$, is a $2$-fold cover of the special orthogonal group $SO(n)$ with the covering map $\sigma: \Spin(n) \to SO(n)$. This can be exploited by introducing additional structure on a Riemannian manifold $(M,g)$.
    \begin{definition}
        Consider a Riemannian manifold $(M,g)$ and denote by $\pi: SO(\mathcal{T}M) \to M$ the principal bundle of oriented orthonormal frames of $M$. A \textit{spin structure} on a Riemannian manifold $(M,g)$ is a principle $\Spin(n)$-bundle $\rho: P_\Spin \to M$ together with a double-covering map $\mu: P_\Spin \to SO(\mathcal{T}M)$ such that $\rho = \pi \circ \mu$ and
        $$
        \mu(g\cdot p) = \sigma(g)\cdot \mu(p), \qquad \forall p\in P_\Spin, \,\forall g\in \Spin(n).
        $$
    \end{definition}

    We recall that neither existence nor uniqueness is guaranteed for existence of spin structures. Nevertheless, existence is completely determined by a topological invariant (the vanishing of the second Stiefel-Whitney class of $M$~\cite{Spin}). A manifold that allows a spin structure on its tangent bundle $\mathcal{T}M$ is then called a \emph{spin manifold}. We shall, in this article, restrict our studies to the case when $(M,g)$ is a spin manifold. Additionally, as for the moment we are not concerned with the dependence of our studies on the spin structure, we shall consider that our manifold $(M,g)$ is simply connected.

    In order to define the necessary ingredients recall, that given a $G$-principle bundle $P$ with a principle connection and a representation $\tau: G \to GL(V)$, one can construct the associated vector bundle $\mathcal{E} := P \times_\tau V$, which will have a connection induced from $P$. In our example, a principal connection on $SO(\mathcal{T}M)$ can be lifted to a principal connection on the Spin bundle $P_\Spin$. Given a representation $\tau:\Spin(n) \to GL(V)$, we obtain the associated bundle and the induced connection. We can take the $V$ to be the Clifford algebra $\mathbb R_n$ and the $\Spin(n)\subset \mathbb R_n$ representation on $\mathbb R_n$. Thus we can define the Dirac operator in an invariant way as follows
	\begin{definition} \label{Def:Dirac_op_inv}
		The Dirac operator $\mathcal{D}$ is the first-order differential operator defined by the composition of the Clifford multiplication with the covariant derivative $\nabla^\tau$,
		\begin{equation*}
			\mathcal{D} := \mathbf{c}\circ\nabla^\tau : C^\infty(\mathcal{E}) \overset{\nabla^\tau}{\longrightarrow} C^\infty(T^*M\otimes \mathcal{E}) \cong C^\infty(TM\otimes \mathcal{E}) \overset{\mathbf{c}}{\longrightarrow} C^\infty(\mathcal{E}).
		\end{equation*}
	\end{definition}

For simplicity of exposition, we introduce all the tools locally on a simply connected subset $U\subset\BR^n$. Let $g:U\to \mathbb{R}^{n \times n}$, where $g(x)=[g_{ij}(x)]$
is a smooth matrix-valued function defined on the open set $U$ in $\mathbb{R}^n$, and that $g(x)$ will always be taken to be positive-definite and symmetric.
Then
\begin{equation}\label{1.2i}
	g_x(u,v)=\sum_{i,j=1}^ng_{ij}(x)u_iv_j, \ \ u,\ v\in \mathbb{R}^n
\end{equation}
is a positive-definite inner product on $\mathbb{R}^n$ and
\begin{equation}\label{1.2ii}
	g_x(X,Y)=\sum_{i,j=1}^ng_{ij}(x)a_ib_j,
	\ \ {\rm where}\ \
	X=\sum_{i=1}^n a_{i}(x)\partial_{x_i},\ \ \ Y=\sum_{j=1}^n b_{j}(x)\partial_{x_j},
\end{equation}
defines a positive-definite inner product space on the tangent space $\mathcal{T}_x(U)$ to $U$ at $x$.
So $U$ can be seen as a coordinate neighborhood for the Riemannian manifold $M$ taking $x=(x_1,...,x_n)$ as coordinates and  (\ref{1.2ii}) as inner product, making it possible to study Dirac operators on $M$, by introducing it as a nonconstant coefficients nonhomogeneous first-order systems of differential operators on $\mathcal{C}^\infty(U,\mathcal{H})$. Here $\mathcal{H}$ should be considered as a Clifford module in the following sense:
\begin{definition} \label{Def:Cliff_module}
	A finite-dimensional real or complex Hilbert space $\mathcal{H}$ is said to be a \emph{Clifford module} if there exist skew-adjoint operators $e_1,\dots,e_n$ in $\textup{Aut}(\mathcal{H})$ such that
	\begin{equation} \label{Eq:Cliff_relation}
		e_je_k + e_ke_j = -2\delta_{jk}e_0 \qquad (1\leq j,k\leq n),
	\end{equation}
	where $e_0 \ (=I)$ is the identity operator.
\end{definition}
Let now
\begin{equation}\label{1.3i}
g^{-1}:U\to \mathbb{R}^{n\times n}, \ \ g^{-1}(x)=[g^{ij}(x)]
\end{equation}
be the inverse matrix-valued function inverse to $g$ and let
$$
\gamma^{}(x)=[\gamma_{ij}(x)],\ \ \gamma^{-1}(x)=[\gamma^{ij}(x)]:U\to \mathbb{R}^{n\times n},
$$
be the unique square roots of $g$ and $g^{-1}$, respectively.

	\begin{definition}\label{Def:iota_emb}
		Let $e_1,\dots,e_n$ be the skew-adjoint operators on $\mathcal{H}$
		satisfying the Clifford relation (\ref{Eq:Cliff_relation}), and set
		\begin{equation*}
			\iota(\sum_{j=1}^n \gamma^{ij}(x)\dif x_j) := e_i(x)=\sum_{j=1}^n \gamma^{ij}(x)e_j, \quad x\in U, \quad \text{for every } i=1,...,n,
		\end{equation*}		
		where $\dif x_j$ are just the duals to $\partial_{x_j}$.
	\end{definition}
	By definition we have that
	\begin{equation}\label{1.6}
		e_j(x)e_k(x)+e_k(x)e_j(x)=-2g^{jk}(x),\ \ \ x\in U.
	\end{equation}

	With respect to $g$ one has an orthonormal frame, i.e., a set of vector fields that form an orthonormal basis at each point $x\in U$. These are given by
	\begin{equation*}
		E_i(x)=\sum_{j=1}^n\gamma^{ij}(x)\partial_{x_j},\ \ \ x\in U \ \ i=1,...,n,
	\end{equation*}
	where $\partial_{x_i}$ are the basis with respect to the local coordinates $x = \p{x_1,\dots,x_n}$.

	On a manifold one differentiates vector fields with respect to another vector field through the notion of \emph{connection}. Let $\mathcal{C}^\infty(U,\mathcal{T}(U))$ be the space of smooth vector fields $X = \sum_i a_i(x)\partial_{x_i}$ on $U$.
	
	\begin{definition}\label{1.8}
		A connection on $(U,g)$ is any bilinear mapping $\nabla:(X,Y)\to \nabla_X(Y)$ from $\mathcal{C}^\infty(U,\mathcal{T}(U))\times \mathcal{C}^\infty(U,\mathcal{T}(U))\to\mathcal{C}^\infty(U,\mathcal{T}(U))$ which satisfy the conditions
		\begin{itemize}
			\item[(i)]
			$\nabla_{f X + g Y}(Z)=f\nabla_{ X}+g\nabla_{Y}(Z)$,
			\item[(ii)]
			$\nabla_X(fY)=(Xf)Y+f\nabla_{X}(Y)$,
		\end{itemize}
		for all $X,Y,Z\in \mathcal{C}^\infty(U,\mathcal{T}(U))$ and for all $f,g\in \mathcal{C}^\infty(U)$.
	\end{definition}

	Of particular interest is the Levi-Civita connection, which is the unique connection that is torsion-free and compatible with the metric. This is stated in the following theorem.
	
	\begin{theorem}\label{th1.10}
		There exists a unique connection on $(U,g)$ having the further properties
		\begin{itemize}
			\item[(i)]
			$\nabla_X(Y)-\nabla_Y(X)=[X,Y],$
			\item[(ii)]
			$Xg(Y,Z)=g(\nabla_X(Y),Z)+g(Y,\nabla_X(Z))$
		\end{itemize}
		for all $X,Y,Z\in \mathcal{C}^\infty(U,\mathcal{T}(U))$.
	\end{theorem}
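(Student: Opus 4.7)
The plan is to prove existence and uniqueness simultaneously by deriving the Koszul formula, which forces the definition of $\nabla$ once the two additional properties are imposed. The argument has two phases: first, assuming such a connection exists, I derive a necessary expression for $g(\nabla_X Y, Z)$ purely in terms of $g$, brackets, and directional derivatives; second, I turn that expression around and use it as a definition, verifying that it satisfies the axioms of Definition~\ref{1.8} as well as properties (i) and (ii).

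For the uniqueness/derivation step, I would write property (ii) three times with cyclic permutations:
\begin{align*}
Xg(Y,Z) &= g(\nabla_X Y, Z) + g(Y, \nabla_X Z), \\
Yg(Z,X) &= g(\nabla_Y Z, X) + g(Z, \nabla_Y X), \\
Zg(X,Y) &= g(\nabla_Z X, Y) + g(X, \nabla_Z Y).
\end{align*}
Adding the first two and subtracting the third, then using property (i) repeatedly to swap $\nabla_X Z$, $\nabla_Y Z$ and $\nabla_Y X$ at the cost of bracket terms, together with the symmetry of $g$, collapses the right-hand side to $2g(\nabla_X Y, Z)$ plus bracket contributions. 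The resulting \emph{Koszul formula}
\begin{equation*}
2g(\nabla_X Y, Z) = Xg(Y,Z) + Yg(Z,X) - Zg(X,Y) + g([X,Y],Z) - g([X,Z],Y) - g([Y,Z],X)
\end{equation*}
determines $\nabla_X Y$ uniquely because $g$ is positive-definite on every tangent space $\mathcal{T}_x(U)$, and hence the assignment $Z\mapsto g(\nabla_X Y, Z)$ pins down $\nabla_X Y$ by non-degeneracy.

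For existence, I would take the Koszul formula as a definition of the map $(X,Y)\mapsto \nabla_X Y$ and check the required properties one by one. Bilinearity over $\mathbb{R}$ and symmetry arguments give properties (i) and (ii) directly by construction once one rearranges the identity. The $\mathcal{C}^\infty(U)$-linearity in $X$ (axiom (i) of Definition~\ref{1.8}) is verified by replacing $X$ with $fX$ in the Koszul formula and noting that every $(fX)$-term picks up the same factor $f$ while the bracket terms produce $(fX)g(Y,Z) = fXg(Y,Z)$ and $[fX,Y] = f[X,Y] - (Yf)X$ contributions that cancel in pairs.

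The main technical obstacle I anticipate is the Leibniz rule $\nabla_X(fY) = (Xf)Y + f\nabla_X Y$. Substituting $fY$ for $Y$ in the Koszul formula produces extra terms of the form $(Xf)g(Y,Z)$, $(Zf)g(Y,X)$ and contributions from $[fY,Z] = f[Y,Z] - (Zf)Y$, and one must show that all the $f$-derivative pieces collapse exactly to $2(Xf)g(Y,Z)$. This is a careful but purely algebraic bookkeeping exercise using the symmetry of $g$; once it is done, the verification that the definition satisfies both (i) and (ii) follows from re-reading the Koszul identity in the appropriate cyclic permutation, completing the proof.
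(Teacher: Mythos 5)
The paper does not give its own proof of this theorem; it is stated as a standard fact with the reader referred to the book \cite{DiracHarm} for proofs of the results in that section. Your argument is the classical Koszul-formula proof of the fundamental theorem of Riemannian geometry, which is precisely the standard argument one finds in such references, so there is no genuine divergence in approach to report.

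Your derivation is sound: adding the first two cyclic permutations of the compatibility identity and subtracting the third, then eliminating $\nabla_Y X$, $\nabla_X Z$, $\nabla_Y Z$ in favour of $\nabla_X Y$, $\nabla_Z X$, $\nabla_Z Y$ via the torsion-free condition, lets the $\nabla_Z$-terms cancel by symmetry of $g$ and yields
\begin{equation*}
2g(\nabla_X Y, Z) = Xg(Y,Z) + Yg(Z,X) - Zg(X,Y) + g([X,Y],Z) - g([X,Z],Y) - g([Y,Z],X),
\end{equation*}
from which non-degeneracy of $g$ gives uniqueness, and reading the formula as a definition gives existence after the tensoriality and Leibniz checks you describe. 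One small remark: in the existence direction, when you verify property (ii) you should also explicitly confirm that the right-hand side of the Koszul formula, for fixed $X,Y$, is $\mathcal{C}^\infty(U)$-linear in $Z$ (so that it defines a one-form and hence, by non-degeneracy, a vector field $\nabla_X Y$); the $f$-derivative terms coming from $Zg(X,Y)$, $g([X,Z],Y)$, and $g([Y,Z],X)$ cancel exactly. This is of the same flavour as the checks you already flag, so it is bookkeeping rather than a gap, but it is the step that legitimizes defining $\nabla_X Y$ by the formula at all.
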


	A connection when applied to two vector fields is again a vector field, i.e., $\nabla_X(Y)\in\mathcal{C}^\infty(U,T(U))$. This makes it possible to write the connection as the linear combination of the elements of the frame.

	\begin{definition}
		The functions $\nu_{ij}^k\in C^\infty(U)$ given
		by
		\begin{equation}
		\nabla_{E_i}(E_j(x))=\sum_{k=1}^n\nu_{ij}^k(x)E_k(x)
		\end{equation}
		are called \emph{connection coefficients}.
	\end{definition}

	There is a rather straightforward way of computing such functions.
	
	\begin{theorem}[ ] \label{Th:ChrisSymb}
		The connection coefficients associated to an orthonormal frame $E_1,\dots,E_n$ are given by
		\begin{equation*}
			\nu_{ij}^k = \frac{1}{2}\p{c_{ij}^k - c_{jk}^i + c_{ki}^j}, \qquad i,j,k = 1,\dots,n,
		\end{equation*}
		where $c_{ij}^k$ are structure functions defined by the identities $[E_i,E_j] = \sum_{k=0}^n c_{ij}^k E_k$, for $i,j = 1,\dots,n$.
	\end{theorem}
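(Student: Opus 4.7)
The plan is to exploit the two defining properties of the Levi-Civita connection from Theorem \ref{th1.10}, specialized to the orthonormal frame $E_1,\dots,E_n$, and then recover $\nu_{ij}^k$ by the standard cyclic-permutation trick familiar from the Koszul formula. The key simplification is that on an orthonormal frame one has $g(E_j,E_l)=\delta_{jl}$, so every $g$-inner product becomes a Kronecker delta and the connection coefficients can be read off directly.

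First I would extract two linear relations among the $\nu_{ij}^k$. Metric compatibility (property (ii)) applied to the constants $g(E_j,E_l)=\delta_{jl}$ gives
\begin{equation*}
0 = E_i\,g(E_j,E_l) = g(\nabla_{E_i}E_j,E_l) + g(E_j,\nabla_{E_i}E_l) = \nu_{ij}^l + \nu_{il}^j,
\end{equation*}
so the coefficients are antisymmetric in the last two slots: $\nu_{ij}^k = -\nu_{ik}^j$. The torsion-free condition (i), together with the definition $[E_i,E_j]=\sum_k c_{ij}^k E_k$, gives
\begin{equation*}
\nu_{ij}^k - \nu_{ji}^k = c_{ij}^k.
\end{equation*}

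Next I would combine these two identities by cyclically permuting the indices. Writing the torsion-free relation for the three cyclic permutations $(i,j,k)$, $(j,k,i)$, $(k,i,j)$ and using antisymmetry $\nu_{ji}^k = -\nu_{jk}^i$, $\nu_{kj}^i = -\nu_{ki}^j$, $\nu_{ik}^j = -\nu_{ij}^k$ to eliminate the ``wrong-order'' terms, one obtains
\begin{align*}
c_{ij}^k &= \nu_{ij}^k + \nu_{jk}^i, \\
c_{jk}^i &= \nu_{jk}^i + \nu_{ki}^j, \\
c_{ki}^j &= \nu_{ki}^j + \nu_{ij}^k.
\end{align*}
Forming the alternating sum $c_{ij}^k - c_{jk}^i + c_{ki}^j$ cancels $\nu_{jk}^i$ and $\nu_{ki}^j$ and leaves $2\nu_{ij}^k$, which yields the claimed formula after dividing by $2$.

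The only bookkeeping subtlety, and what I expect to be the main point to get right, is the careful use of the antisymmetry $\nu_{ij}^k=-\nu_{ik}^j$ when rewriting $\nu_{ji}^k$, $\nu_{kj}^i$, $\nu_{ik}^j$ with the indices in the correct order: a sign error here flips one of the terms in the final formula. Apart from that the argument is a direct algebraic manipulation, and existence/uniqueness of the $\nu_{ij}^k$ satisfying both relations is guaranteed by Theorem \ref{th1.10}, so the explicit formula obtained must agree with the Levi-Civita coefficients.
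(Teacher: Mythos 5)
The paper states this theorem without proof, referring the reader to the standard literature (e.g.\ the cited book of Gilbert--Murray), so there is no proof in the paper to compare against. That said, your argument is correct and is exactly the standard derivation: it is the Koszul formula specialized to an orthonormal frame, where $g(E_j,E_l)=\delta_{jl}$ turns metric compatibility into the index antisymmetry $\nu_{ij}^k=-\nu_{ik}^j$ and torsion-freeness into $\nu_{ij}^k-\nu_{ji}^k=c_{ij}^k$. The three cyclic relations you write, $c_{ij}^k=\nu_{ij}^k+\nu_{jk}^i$, $c_{jk}^i=\nu_{jk}^i+\nu_{ki}^j$, $c_{ki}^j=\nu_{ki}^j+\nu_{ij}^k$, are each a single application of the antisymmetry to one term of the torsion identity, and the alternating combination $c_{ij}^k-c_{jk}^i+c_{ki}^j$ telescopes to $2\nu_{ij}^k$ as you claim. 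Your final remark about uniqueness from Theorem \ref{th1.10} correctly closes the loop: the formula you derive is forced, not merely consistent. No gaps.
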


	We now wish to be able not only to differentiate vector fields but also $\mathcal{H}$-valued functions. For that we need to extend the Levi-Civita connection on $(U,g)$ to a connection $(X,F)\longrightarrow\nabla_X(F)$ $\mathcal{C}^\infty(U,\mathcal{T}(U))\times \mathcal{C}^\infty(U,\mathcal{H})\to \mathcal{C}^\infty(U,\mathcal{H})$. For that, we define connection coefficients that are not $\BR$ but $\spin(n)$-valued, the Lie algebra of the Lie group $\Spin(n)$, in terms of the Levi-Civita connection coeffiecients.
	
	\begin{definition}
		Let $\nu_{ij}^k$ be the functions defined above. The function $\nu_i(x)$ given by
		\begin{equation}\label{1.13iiinu}
			\nu_i(x)=\frac{1}{2}\sum_{j<k}\nu_{ij}^k(x)e_je_k=\frac{1}{4}\sum_{j,k=1}^n\nu_{ij}^k(x)e_je_k
		\end{equation}
		is a smooth $\spin(n)$-valued function on $U$.
	\end{definition}
	
	Let $d\tau :\spin(n)\to \textup{Aut}(\mathcal{H})$ be the representation of the Lie algebra of $\Spin(n)$ derived from the representation $\tau: \Spin(n)\to \textup{Aut}(\mathcal{H})$.
	
	\begin{definition}
		The map $\nabla^\tau:(X,F)\to \nabla_X(F)$ from $\mathcal{C}^\infty(U,\mathcal{T}(U))\times \mathcal{C}^\infty(U,\mathcal{H})\to\mathcal{C}^\infty(U,\mathcal{H})$ defines a connection, which is expressed as a covariant derivative $\nabla_X^\tau$ by
		\begin{equation*}
			\nabla_{X}^\tau = X(x) + d\tau(\nu_i(x))
		\end{equation*}
		acting on $\mathcal{C}^\infty(U,\mathcal{H})$.
	\end{definition}

In analogy with Definition \ref{1.8} it has the properties
the conditions
\begin{itemize}
\item[(i)]
$\nabla^\tau_{fX+gY}(F)=f\nabla^\tau_{ X}(F)+g\nabla^\tau_{Y}(F)$,
\item[(ii)]
$\nabla^\tau_X(fF)=(Xf)F+f\nabla^\tau_{X}(F)$,
\end{itemize}
for all $F,G\in \mathcal{C}^\infty(U,\mathcal{H})$ and for all $X\in \mathcal{C}^\infty(U,\mathcal{T}(U))$ and $f,g\in \mathcal{C}^\infty(U)$.
\begin{remark}
{\rm
Since $ \mathcal{C}^\infty(U,\mathcal{H})$ is a $\mathcal{C}^\infty(U)$-module
condition (i) in Theorem \ref{th1.10} does not make sense but (ii) in Theorem \ref{th1.10} becomes
\begin{equation}\label{1.18iii}
Xg_x(F(x),G(x))=(\nabla_X^\tau F(x),G(x))+(F(x),\nabla_X^\tau G(x)),
\end{equation}
where $(\cdot,\cdot)$
denotes the inner product on $\mathcal{H}$.
}
\end{remark}
The following theorem shows how to write the Dirac operator locally in terms of the covariant derivative $\nabla_{E_i}^\tau$ with respect to a local orthonormal basis $(E_1,\dots,E_n)$, which is a consequence of the embedding $\iota$ in Definition \ref{Def:iota_emb} and Definition \ref{Def:Dirac_op_inv}.
\begin{theorem}\label{Diraccovar}
As a differential operator on $\mathcal{C}^\infty(U,\mathcal{H})$ the Dirac operator on $(U,g)$ becomes
\begin{equation}
\mathcal{D} = \sum_{i=1}^ne_i\nabla^\tau_{E_i}
\end{equation}
and, more explicitly, we can write
\begin{equation}
\mathcal{D} = \sum_{i=1}^ne_i \Big(E_i(x)+d\tau(\nu_i(x))\Big),
\end{equation}
where $\nu_i(x)$ are given by (\ref{1.13iiinu}).
\end{theorem}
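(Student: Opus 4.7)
My plan is to unwind the invariant definition $\mathcal{D} = \mathbf{c} \circ \nabla^\tau$ of Definition \ref{Def:Dirac_op_inv} in the local orthonormal frame $\{E_1,\ldots,E_n\}$ on $(U,g)$, and then to substitute the explicit local expression for the spin covariant derivative on $\mathcal{C}^\infty(U,\mathcal{H})$. The two formulas in the statement then appear in sequence: first the frame-intrinsic one, then the coordinate one.

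The first step proceeds as follows. For $F\in \mathcal{C}^\infty(U,\mathcal{H})$ the covariant derivative $\nabla^\tau F$ is an element of $\mathcal{C}^\infty(T^\ast U\otimes \mathcal{E})$, and expanding it in the coframe $\{\theta^1,\ldots,\theta^n\}$ dual to $\{E_1,\ldots,E_n\}$ yields
$$\nabla^\tau F \;=\; \sum_{i=1}^n \theta^i\otimes \nabla^\tau_{E_i} F,$$
directly from the defining pairing $(\nabla^\tau F)(E_i)=\nabla^\tau_{E_i}F$. Under the metric identification $T^\ast U\simeq TU$ followed by the embedding $\iota$ of Definition \ref{Def:iota_emb}, each orthonormal covector $\theta^i$ is sent to the constant Clifford generator $e_i$, because $\iota$ is designed precisely so that the orthonormal coframe of $g$ lands on the generators satisfying $e_je_k+e_ke_j=-2\delta_{jk}$, in agreement with \eqref{1.6}. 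Applying the Clifford module morphism $\mathbf{c}$ then gives
$$\mathcal{D}F \;=\; \mathbf{c}(\nabla^\tau F) \;=\; \sum_{i=1}^n e_i\,\nabla^\tau_{E_i}F,$$
which is the first identity.

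For the second identity, I would insert the explicit local form of the spin covariant derivative stated just above the theorem, namely $\nabla^\tau_{E_i} = E_i(x) + d\tau(\nu_i(x))$, where $\nu_i(x)$ is the $\spin(n)$-valued function \eqref{1.13iiinu} built from the Levi-Civita connection coefficients through Theorem \ref{Th:ChrisSymb}. Plugging this into the previous display immediately produces
$$\mathcal{D} \;=\; \sum_{i=1}^n e_i\bigl(E_i(x)+d\tau(\nu_i(x))\bigr).$$
The only non-mechanical point I anticipate is justifying the identification $\mathbf{c}(\theta^i)=e_i$, but this is built into the paper's construction of the Clifford bundle $\mathcal{C}$ and the embedding $\iota$; once it is taken as given, the rest of the argument is a direct expansion of the composition $\mathbf{c}\circ\nabla^\tau$ in the orthonormal frame, and no further estimate is needed.
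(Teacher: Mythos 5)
The paper does not supply a proof of Theorem \ref{Diraccovar}: the section's opening defers all proofs to \cite{DiracHarm}, and the sentence immediately preceding the statement says only that it ``is a consequence of the embedding $\iota$ in Definition \ref{Def:iota_emb} and Definition \ref{Def:Dirac_op_inv}.'' Your argument is exactly the unwinding of that remark---expand $\nabla^\tau F=\sum_i\theta^i\otimes\nabla^\tau_{E_i}F$ in the orthonormal coframe, push the covector leg through $\iota$ and $\mathbf{c}$, then substitute $\nabla^\tau_{E_i}=E_i+d\tau(\nu_i)$---and the overall structure is correct and aligned with what the paper points to.

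The one spot to tighten is precisely the hinge you flag, namely $\iota(\theta^i)=e_i$. You cite \eqref{1.6} as justification, but \eqref{1.6} reads $e_j(x)e_k(x)+e_k(x)e_j(x)=-2g^{jk}(x)$ for the $x$-dependent elements $e_j(x)=\sum_l\gamma^{jl}(x)e_l$; these are the Clifford images of the \emph{coordinate} covectors $dx_j$ (whose cometric pairing is $g^{jk}$), not of the orthonormal coframe, and the anticommutator is $-2g^{jk}$ rather than $-2\delta_{jk}$. The flat relation $e_je_k+e_ke_j=-2\delta_{jk}$ is \eqref{Eq:Cliff_relation}. To land on $\iota(\theta^i)=e_i$ from Definition \ref{Def:iota_emb} one still needs a short computation: with $\iota(dx_j)=e_j(x)$ and $\theta^i=\sum_j\gamma_{ij}\,dx_j$, linearity of $\iota$ gives
\begin{equation*}
\iota(\theta^i)=\sum_j\gamma_{ij}\,e_j(x)=\sum_{j,k}\gamma_{ij}\gamma^{jk}e_k=\sum_k\delta_{ik}e_k=e_i.
\end{equation*}
Equivalently, the isometry requirement $\{\iota(u),\iota(v)\}=-2g(u,v)$ in condition (2) forces the orthonormal coframe to map to flat generators. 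This is a bookkeeping correction, not a logical gap; the remainder of your argument, including the final substitution using $\nu_i$ from \eqref{1.13iiinu}, is fine.
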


\begin{remark}
{\rm
Since the covariant derivative $\nabla^\tau_{E_i}$ becomes simply the vector field $\partial_{x_i}$ on
$\mathcal{C}^\infty(U,\mathcal{H})$ when $(U,g)$ is an open set of $\mathbb{R}^n$ with the usual Euclidean structure, it is clear that the operator $\mathcal{D}$ in Theorem \ref{Diraccovar} reduces to the standard Dirac operator.
}
\end{remark}
In order to define the $S$-spectrum it is necessary to write an explicit
form for the operator $\mathcal{D}^2$.

\begin{theorem}\label{QUADRATODID}
As a differential operator on $\mathcal{C}^\infty(U,\mathcal{H})$ the square of the Dirac operator is given by
\begin{equation}\label{Dquadrato}
\begin{split}
\mathcal{D}^2&=-\sum_{j=1}^n\Big( \nabla^\tau_{E_j}\nabla^\tau_{E_j} -\Big( \sum_{k=1}^n\nu_{jj}^k \nabla^\tau_{E_k} \Big) \Big)
+\sum_{i<j}e_ie_j\big(\nabla^\tau_{E_i}\nabla^\tau_{E_j}-
\nabla^\tau_{E_j}\nabla^\tau_{E_i}-\nabla^\tau_{[E_i,E_j]}
  \big).
\end{split}
\end{equation}
\end{theorem}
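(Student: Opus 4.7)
\textbf{Proof plan for Theorem \ref{QUADRATODID}.} The statement is a local coordinate version of the Bochner--Weitzenböck decomposition. The plan is to expand $\mathcal{D}^2$ in the orthonormal frame, carefully track how the Clifford generators commute past $\nabla^\tau$, and then reorganize using the symmetries of the Levi-Civita connection coefficients.

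First I would write, using Theorem \ref{Diraccovar} and setting $\nabla_i := \nabla^\tau_{E_i}$ for brevity,
\begin{equation*}
\mathcal{D}^2 = \sum_{i,j=1}^n e_i\, \nabla_i (e_j \nabla_j).
\end{equation*}
The key ingredient is the commutator $[\nabla_i, e_\ell]$. Since $\nabla_i = E_i + d\tau(\nu_i)$ with $\nu_i = \tfrac14 \sum_{j,k} \nu_{ij}^k e_j e_k$, and since $e_\ell$ does not depend on $x$, one has $[E_i, e_\ell]=0$ on $\mathcal{H}$-valued functions; the only contribution comes from $[d\tau(\nu_i), e_\ell]$. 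Using the Clifford identity $[e_j e_k, e_\ell] = -2\delta_{k\ell}e_j + 2\delta_{j\ell}e_k$ together with the metric-compatibility antisymmetry $\nu_{ab}^c = -\nu_{ac}^b$ (which follows from Theorem \ref{th1.10}(ii) applied to the orthonormal frame), a short computation yields
\begin{equation*}
[\nabla_i, e_\ell] = \sum_{k=1}^n \nu_{i\ell}^k\, e_k.
\end{equation*}
Substituting back gives
\begin{equation*}
\mathcal{D}^2 = \sum_{i,j} e_i e_j \nabla_i \nabla_j \; + \; \sum_{i,j,k} \nu_{ij}^k\, e_i e_k \nabla_j .
\end{equation*}

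Next I would handle the two pieces separately. For the first, the Clifford relations $e_i^2=-1$ and $e_i e_j = -e_j e_i$ for $i\neq j$ give
\begin{equation*}
\sum_{i,j} e_i e_j \nabla_i \nabla_j = -\sum_i \nabla_i^2 + \sum_{i<j} e_i e_j (\nabla_i \nabla_j - \nabla_j \nabla_i).
\end{equation*}
For the second piece, I would split according to $i=k$ versus $i\neq k$. When $i=k$ the Clifford square $e_i^2=-1$ appears; combining with $\nu_{ij}^i = -\nu_{ii}^j$ (metric compatibility again), the diagonal contribution reduces exactly to $\sum_{j,k}\nu_{jj}^k \nabla_k$, which is the ``lower order'' correction in the target formula. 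For $i\neq k$, pairing the $(i,k)$ term with the $(k,i)$ term (using $e_k e_i=-e_i e_k$) and then applying the metric antisymmetry once more, the off-diagonal contribution collapses to
\begin{equation*}
-\sum_{i<j} e_i e_j \sum_{k} (\nu_{ij}^k - \nu_{ji}^k)\, \nabla_k \; = \; -\sum_{i<j} e_i e_j\, \nabla^\tau_{[E_i,E_j]},
\end{equation*}
where the last equality uses the torsion-free identity of Theorem \ref{th1.10}(i), namely $[E_i, E_j] = \sum_k (\nu_{ij}^k - \nu_{ji}^k)E_k$.

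Assembling the two pieces, one obtains
\begin{equation*}
\mathcal{D}^2 = -\sum_j \nabla_j^2 + \sum_{j,k}\nu_{jj}^k \nabla_k + \sum_{i<j} e_i e_j\bigl(\nabla_i \nabla_j - \nabla_j \nabla_i - \nabla^\tau_{[E_i,E_j]}\bigr),
\end{equation*}
which is exactly \eqref{Dquadrato}. The main technical obstacle is the computation of $[\nabla^\tau_{E_i}, e_\ell]$: one must use both the explicit form of $\nu_i$ given in \eqref{1.13iiinu} and the Clifford commutator identity to reduce a sum of $\tfrac14 \sum_{j,k}\nu_{ij}^k [e_j e_k, e_\ell]$ down to the clean expression $\sum_k \nu_{i\ell}^k e_k$. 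Once this commutator is in place, the remainder is a careful but routine bookkeeping of indices exploiting the antisymmetries of $\nu_{ij}^k$ together with the torsion-free and metric-compatibility conditions for the Levi-Civita connection.
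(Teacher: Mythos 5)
Your proof is correct and is essentially the standard derivation found in the literature (the paper itself does not prove Theorem \ref{QUADRATODID}; it refers to Gilbert--Murray for all proofs in that section). The central computation is the compatibility identity $[\nabla^\tau_{E_i},e_\ell]=\sum_k\nu_{i\ell}^k e_k$, which you correctly obtain from the Clifford bracket $[e_je_k,e_\ell]=2\delta_{j\ell}e_k-2\delta_{k\ell}e_j$ together with the antisymmetry $\nu_{ij}^k=-\nu_{ik}^j$ forced by metric compatibility of the Levi-Civita connection in an orthonormal frame; once this is in hand, the symmetrization $\sum_{i,j}e_ie_j\nabla_i\nabla_j=-\sum_i\nabla_i^2+\sum_{i<j}e_ie_j(\nabla_i\nabla_j-\nabla_j\nabla_i)$ and the splitting of the lower-order term $\sum_{i,j,k}\nu_{ij}^k e_ie_k\nabla_j$ into its $i=k$ diagonal piece ($\sum_{j,k}\nu_{jj}^k\nabla_k$ via $\nu_{ij}^i=-\nu_{ii}^j$) and its $i\neq k$ antisymmetric piece ($-\sum_{i<j}e_ie_j\nabla^\tau_{[E_i,E_j]}$ via $[E_i,E_j]=\sum_k(\nu_{ij}^k-\nu_{ji}^k)E_k$) are routine but carried out correctly. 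One small point worth making explicit if this were fleshed out: the identity $[d\tau(\nu_i),e_\ell]=[\nu_i,e_\ell]$ relies on the standard compatibility $\tau(g)\,e_\ell\,\tau(g)^{-1}=\mathbf c(\sigma(g)e_\ell)$ between the $\Spin(n)$-representation $\tau$ and the Clifford module action on $\mathcal H$; this is implicit in the paper's framework but would deserve a sentence.
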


\begin{remark}
{\rm
The formula for $\mathcal{D}^2$ is at the heart of the property of $\mathcal{D}$ on manifold and in the case the operator $\mathcal{D}$ is not self-adjoint this formula is
of crucial importance for the definition of the $S$-spectrum.
We point out that the first term generalizes the Laplace operator while
the second term plays the role of a curvature operator.

The definition of the second order Laplacian $\Delta_\tau$
follows the classical $\Delta={\rm div} ({\rm grad})$ form of the classical Laplacian on $\mathbb{R}^n$
where in the present context the gradient operator is represented by the total derivative
\begin{equation}\label{1.23}
\nabla_\tau:F\to \sum_{j=1}^n\nabla_{E_j}^\tau F(x)\otimes e_j \ \ {\rm for}\ \ \ F\in \mathcal{C}^\infty(U,\mathcal{H})
\end{equation}
mapping
$\mathcal{C}^\infty(U,\mathcal{H})$ into $\mathcal{C}^\infty(U,\mathcal{H}\otimes \mathbb{R}^n)$.
As $\mathcal{H}\otimes \mathbb{R}^n$ is an $\mathcal{U}_n$-module on which $\tau\otimes\sigma$ defined a representation of $Spin(n)$
a linear connection $(X,G)\to \nabla_X^{\tau\otimes\sigma}$ on
$\mathcal{H}\otimes \mathbb{R}^n$-valued functions is obtained by replacing $\tau$ by
$\tau\otimes\sigma$ throughout the covariant derivative.
}
\end{remark}

\begin{definition}
The definition of the divergence operator from
$\mathcal{C}^\infty(U,\mathcal{H}\otimes \mathbb{R}^n)$ to $\mathcal{C}^\infty(U,\mathcal{H})$ is given by
\begin{equation}
G\to \sum_{j=1}^n\Big(\nabla^\tau_{E_j}(G_j)-\sum_{k=1}^n\nu_{jj}^k(x)G_k  \Big),\ \ \ \ {\rm where} \ \  \ G=\sum_{j=1}^nG_j\otimes e_j.
\end{equation}
This operator is the adjoint $\nabla_\tau^*$ of the divergence operator defined in (\ref{1.23}).
\end{definition}

\begin{definition}
The Laplace operator $\Delta_\tau$
is defined by
\begin{equation}
\Delta_\tau F=\nabla_\tau^*\nabla_\tau F
 =\sum_{j=1}^n\Big(\nabla^\tau_{E_j}\nabla^\tau_{E_j}(F)-\sum_{k=1}^n\nu_{jj}^k(x) \nabla^\tau_{E_k}F  \Big),
\end{equation}
for $F\in \mathcal{C}^\infty(U,\mathcal{H})$.
\end{definition}
Clearly $\Delta_\tau$ is given by
$$
\Delta_\tau =\sum_{ij=1}^n g^{ij}(x)\partial_{x_i}\partial_{x_j}+{\rm \ lower order\  terms},
$$
so every $\Delta_\tau$ has the same principal symbol, in particular $\Delta_\tau$ is elliptic. We now write the part of the operator related to the curvature.
Heuristically speaking the curvature is a measure of the failure of $\nabla_X$, $\nabla_Y$, to commute as operators on
$\mathcal{C}^\infty(U,\mathcal{T}(U))$, while the curvature on $\mathcal{H}$
 measures of the failure of $\nabla_X^\tau$, $\nabla_Y^\tau$, to commute as operators on $\mathcal{C}^\infty(U,\mathcal{H})$.

 \begin{definition}[Curvature operators]
 The curvature operators are defined as
 $$
 R(X,Y)=\nabla_X\nabla_Y-\nabla_Y\nabla_X-\nabla_{[X,Y]},\ \ \ \ {\rm for\  all}\ \ \ X,Y\in \mathcal{C}^\infty(U,\mathcal{T}(U))
 $$
 and
 $$
 R^\tau(X,Y)=\nabla_X^\tau\nabla_Y^\tau-\nabla_Y^\tau\nabla_X^\tau-\nabla^\tau_{[X,Y]},\ \ \ \ {\rm for\  all}\ \ \ X,Y\in \mathcal{C}^\infty(U,\mathcal{H})
 $$
 \end{definition}
So we have the important theorem on the structure of the operator $\mathcal{D}^2$.
\begin{theorem}\label{DdueStruc}
On $\mathcal{C}^\infty(U,\mathcal{H})$ we have
\begin{equation}
\begin{split}
\mathcal{D}^2&=-\Delta_\tau+\sum_{i<j}e_ie_jR^\tau (E_i,E_j)
\\
&
=-\Delta_\tau+ \frac{1}{2} \sum_{i<j}e_ie_j \Big(\sum_{k<\ell} R_{ijk\ell}(x)\dif\tau (e_ke_\ell)\Big),
\end{split}
\end{equation}
where $\Delta_\tau =\nabla_\tau^*\nabla_\tau$ is the Laplace operator on $\mathcal{C}^\infty(U,\mathcal{H})$ and
$$
R_{ijk\ell}(x) := g_x(R(E_i,E_j)E_k,E_l)
$$
are the components of the Riemann curvature tensor with respect to the orthonormal frame $E_1,\dots,E_n$.
\end{theorem}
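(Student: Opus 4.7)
The plan is to deduce Theorem~\ref{DdueStruc} directly from the expression for $\mathcal{D}^2$ already established in Theorem~\ref{QUADRATODID}, which separates $\mathcal{D}^2$ into a ``Laplacian piece'' and an ``antisymmetrized piece.'' First I would observe that the first sum in (\ref{Dquadrato}), namely $-\sum_{j}\bigl(\nabla^\tau_{E_j}\nabla^\tau_{E_j} - \sum_{k}\nu_{jj}^k\nabla^\tau_{E_k}\bigr)$, coincides with $-\Delta_\tau$ by the very definition $\Delta_\tau = \nabla_\tau^*\nabla_\tau$ given just above. Simultaneously, the second sum in (\ref{Dquadrato}) is, by the definition of the curvature operator, exactly $\sum_{i<j} e_ie_j\, R^\tau(E_i,E_j)$. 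This establishes the first equality by inspection, with no further computation.

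For the second equality I must rewrite $R^\tau(E_i,E_j)$ in terms of the scalar components $R_{ijk\ell}$. Since $\nabla^\tau$ was obtained by lifting the Levi-Civita principal connection on $SO(\mathcal{T}M)$ to the $\Spin(n)$-bundle $P_\Spin$ and then associating via $d\tau:\mathfrak{spin}(n)\to\operatorname{End}(\mathcal{H})$, the standard fact that induced connections on associated bundles have curvature equal to the image of the lift's curvature under the representation gives
$$
R^\tau(X,Y) \;=\; d\tau\bigl(\widetilde{R}(X,Y)\bigr),
$$
where $\widetilde{R}$ is the $\mathfrak{spin}(n)$-valued curvature of the spin lift. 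Expanding the Levi-Civita curvature in the orthonormal frame yields $R(E_i,E_j)E_k = \sum_\ell R_{ijk\ell}\,E_\ell$, so that as an element of $\mathfrak{so}(n)$ in the frame, $R(E_i,E_j) = \sum_{k<\ell} R_{ijk\ell}\,(E_k \wedge E_\ell)$. The differential of the double cover $\sigma:\Spin(n)\to SO(n)$ identifies $\tfrac{1}{2}e_ke_\ell\in\mathfrak{spin}(n)$ with $E_k\wedge E_\ell\in\mathfrak{so}(n)$; inverting this identification and applying $d\tau$ gives
$$
R^\tau(E_i,E_j) \;=\; \tfrac{1}{2}\sum_{k<\ell} R_{ijk\ell}\,d\tau(e_ke_\ell),
$$
and substitution into the first equality yields the claimed formula.

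The step I expect to require the most care is the normalization in the spin-lift identification: the factor $\tfrac{1}{2}$ is a direct consequence of the convention $d\sigma(\tfrac{1}{2} e_ke_\ell) = E_k\wedge E_\ell$, and it must be verified once using the Clifford relations $e_je_k+e_ke_j = -2\delta_{jk}$ together with the formula $\sigma(g)v = g v g^{-1}$ for $g\in\Spin(n)$ acting on $\mathbb{R}^n\subset\mathbb{R}_n$. Concretely, one computes $\tfrac{d}{dt}\bigl(e^{t e_ke_\ell/2}\, v\, e^{-te_ke_\ell/2}\bigr)\big|_{t=0}$ and checks that the resulting linear endomorphism of $\mathbb{R}^n$ is precisely $E_k\wedge E_\ell$. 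Once this normalization is in place, the remainder of the argument reduces to bookkeeping in the orthonormal frame, and the theorem follows.
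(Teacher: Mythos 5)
Your proof is correct; note that the paper itself does not prove Theorem~\ref{DdueStruc} --- it, like the other results of Section~\ref{FACTONMANIFOLD}, is cited from \cite{DiracHarm} --- so there is no in-text argument to compare against. Your route is the standard one: the first equality is read off directly from Theorem~\ref{QUADRATODID} together with the definitions of $\Delta_\tau$ and $R^\tau$, and the second equality reduces to $R^\tau = d\tau\circ\widetilde{R}$ combined with the normalization $d\sigma\bigl(\tfrac{1}{2}e_ke_\ell\bigr) = E_k\wedge E_\ell$ for the differential of the covering map, which you correctly identify as the step supplying the factor $\tfrac{1}{2}$ and correctly propose to verify via the Clifford relations and $\sigma(g)v = gvg^{-1}$.
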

	
In some special case the curvature term simplifies.
We recall that the scalar curvature of $(U,g)$ is defined by
\begin{equation} \label{Eq:Scalar_curv}
\kappa(x)=-\sum_{i,j=1}^n R_{ijij}(x).
\end{equation}
When we assume $\mathcal{H}=\Theta_n$ (see \cite{DiracHarm}) and $\tau$ is the Spin representation of $\Spin(n)$ on $\Theta_n$, $n=2m$, the second order operator
$\mathcal{D}^2$ is often called spinor Laplacian is
$$
\mathcal{D}^2=-\Delta_S+\frac{1}{4}\kappa(x).
$$
 We conclude this section by presenting results that are essential for explicitly formulating the Dirac operator on hyperbolic and spherical spaces, which will serve our purposes in the subsequent sections. Precisely, we need to select a representation of $\tau$
appearing in the Dirac operator, see (\ref{Eq:Dirac_Hyper_A}) below.
To this end, as representation $\tau$ of $\Spin(n)$ on $\BR_n$ we choose the left multiplication on $\BR_n$. More precisely, the map
	\begin{equation} \label{Eq:Spin_Rep}
		\tau: \Spin(n) \longrightarrow \text{End}(\BR_n),
	\end{equation}
	is defined as  $\tau(a)(u):=au$ for any $a\in\Spin(n)$ and $u\in\BR_n$. In the next lemma we determine $d\tau$ for a given representation $\tau$.
	
	\begin{lemma}\label{dtaurep}
		Let $\tau$ be the representation of $\Spin(n)$ on $\BR_n$ given by left multiplication. Then $\tau$ induces a representation $\dif\tau$
		\begin{equation*}
			\dif\tau:\spin(n)\longrightarrow\mathfrak{end}(\BR_n)
		\end{equation*}
		of $\spin(n)$ on $\BR_n$ given by $\dif\tau \equiv e_ie_j u$, for $u\in\BR_n$.
	\end{lemma}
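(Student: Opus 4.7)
The plan is to differentiate the representation $\tau$ at the identity of $\Spin(n)$, exploiting the fact that $\Spin(n)$ sits inside the Clifford algebra $\mathbb{R}_n$ as an embedded Lie subgroup, so that its Lie-group exponential coincides (when restricted to the Lie subalgebra $\spin(n)\subset\mathbb{R}_n$) with the Clifford-algebraic exponential series $\exp(X)=\sum_{k\geq 0}X^k/k!$ (the series converges absolutely in the finite-dimensional algebra $\mathbb{R}_n$).

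First I would recall the standard identification of $\spin(n)$ as the real vector subspace of $\mathbb{R}_n$ spanned by the bivectors $\{e_ie_j\}_{1\le i<j\le n}$, with Lie bracket given by the Clifford commutator $[X,Y]=XY-YX$. In particular, for $i<j$ each $e_ie_j$ is a genuine element of $\spin(n)$, so evaluating $d\tau$ on this basis is sufficient to determine it by linearity.

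Next, for any $X\in\spin(n)$ and $u\in\mathbb{R}_n$, the induced Lie-algebra representation is by definition
\[
d\tau(X)(u)=\frac{d}{dt}\bigg|_{t=0}\tau(\exp(tX))(u)=\frac{d}{dt}\bigg|_{t=0}\bigl(\exp(tX)\,u\bigr).
\]
Because right-multiplication by the fixed element $u$ is a linear (hence continuous) endomorphism of $\mathbb{R}_n$, it commutes with $d/dt$; combined with termwise differentiation of the absolutely convergent series $\exp(tX)=\sum_{k\geq 0}(tX)^k/k!$ at $t=0$, this yields $d\tau(X)(u)=Xu$. Specializing to $X=e_ie_j$ with $i<j$ gives $d\tau(e_ie_j)(u)=e_ie_j\,u$, which is the claim.

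There is no real obstacle: the one substantive point is the (standard) coincidence of the Clifford-algebra exponential on $\spin(n)$ with the Lie-group exponential into $\Spin(n)\subset\mathbb{R}_n$, which legitimizes computing $d\tau$ via the power series in the ambient algebra. Everything else reduces to a direct and short calculation using the linearity of right-multiplication.
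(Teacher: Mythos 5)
Your proof is correct and follows essentially the same route as the paper: identify $\spin(n)$ with the bivector subspace of $\BR_n$ and differentiate $t\mapsto\tau(\exp(tX))u=\exp(tX)\,u$ at $t=0$. The only stylistic difference is that the paper evaluates $\exp(te_ie_j)=\cos t+e_ie_j\sin t$ explicitly for basis bivectors (using $(e_ie_j)^2=-1$) and then extends by linearity, whereas you differentiate the power series termwise directly for a general $X\in\spin(n)$; both yield $d\tau(X)u=Xu$.
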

	\begin{proof}
		There is a canonical embedding $\Lambda^2\BR^n\subset\BR_n$. Using Proposition 6.1 in \cite{Spin}, we know that the Lie algebra of $(\BR_n,[\cdot,\cdot])$ corresponding to the subgroup $\Spin(n)\subset\BR_n^{\times}$ is
		\begin{equation*}
			\spin(n) = \Lambda^2\BR^n.
		\end{equation*}
		This implies that any element $X\in\spin(n)$ is a linear combination of $e_{i}e_{j}$. In particular, we can consider the basis elements $X_{ij} = e_{i}e_{j}$
		and compute the represention of Lie algebra $\spin(n)$ on $\BR_n$ as follows,
		\begin{align*}
			\dif\tau(X_{ij})(u) = \frac{\dif}{\dif t} \tau(\exp tX_{ij})(u)\Big\vert_{t=0} = \frac{\dif}{\dif t} (\cos t + e_{i}e_j\sin t)u\Big\vert_{t=0} = e_{i}e_{j}u.
		\end{align*}
		This then induces as representation of the whole Lie algebra $\spin(n)$ as the left multiplication by $e_ie_j$.
	\end{proof}

	\section{Dirac operator on the hyperbolic space with Dirichlet  boundary conditions} \label{SubSec:Hyperbolic}
	
	Let us consider the Dirac operator $\mathcal{D}_H$ on the hyperbolic space.
	The operator $\mathcal{D}_H$ is computed in \cite{DiracHarm}, p. 273, for the whole half-space $\BR^{n}_+$ for $n\in \mathbb{N}$ and with $n\geq2$
		\begin{equation*}
			\BR^{n}_+ := \set{(x,y) \midbar x=\p{x_1,\cdots,x_{n-1}}\in\BR^{n-1}, \quad y>0}
		\end{equation*}
	with hyperbolic metric $g$ given by
		\begin{equation*}
			g(x,y) = \frac{1}{y^2}\p{\sum_{i=1}^{n-1} \dif x_i^2 + \dif y^2},
		\end{equation*}
$\mathcal{D}_H$ becomes
	\begin{equation} \label{Eq:Dirac_Hyper_A}
		\mathcal{D}_H = \sum_{i=1}^{n-1} e_i\p{y\partial_{x_i} + \frac{1}{2}d\tau(e_ie_{n}) } + e_{n}y\partial_y.
	\end{equation}

An immediate consequence of the  Lemma \ref{dtaurep} is the following:	
\begin{corollary}
Let $n\in \mathbb{N}$, $n\geq2$, and $\tau$ be the representation of $\spin(n)$ given by th eleft multiplication. Then, the Dirac operator (\ref{Eq:Dirac_Hyper_A}) on the hyperbolic space turned out to be
\begin{equation} \label{Eq:Dirac_HyperA}
		\mathcal{D}_H = \sum_{i=1}^{n-1} e_i\p{y\partial_{x_i} + \frac{1}{2}e_ie_{n}} + e_{n}y\partial_y
	\end{equation}
which can be written as:
\begin{equation} \label{Eq:Dirac_Hyper}
		\mathcal{D}_H = \sum_{i=1}^{n-1} e_iy\partial_{x_i} - \alpha_ne_{n} + e_{n}y\partial_y,
	\end{equation}
where we have set
\begin{equation}\label{alphan}
\alpha_n:= \frac{n-1}{2}.
\end{equation}
\end{corollary}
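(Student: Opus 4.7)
The corollary is essentially a direct substitution into the invariant formula \eqref{Eq:Dirac_Hyper_A} together with a one-line Clifford algebra simplification, so the plan is short and mechanical.

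First, I would invoke Lemma \ref{dtaurep} directly. Since $\tau$ is chosen as left multiplication on $\mathbb{R}_n$, the induced Lie algebra representation satisfies $d\tau(e_i e_n)u = e_i e_n u$ for any $u \in \mathbb{R}_n$. Substituting this into \eqref{Eq:Dirac_Hyper_A} yields \eqref{Eq:Dirac_HyperA} immediately, with no further argument required. This takes care of the first identity in the statement.

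Next, to pass from \eqref{Eq:Dirac_HyperA} to the final form \eqref{Eq:Dirac_Hyper}, I would distribute $e_i$ across the parenthesis and simplify the curvature-like term. Using the Clifford relation $e_i^2 = -1$, we get
\begin{equation*}
e_i \cdot \tfrac{1}{2} e_i e_n \;=\; \tfrac{1}{2} e_i^2 e_n \;=\; -\tfrac{1}{2} e_n,
\end{equation*}
and summing over $i = 1, \dots, n-1$ gives the constant term
\begin{equation*}
\sum_{i=1}^{n-1} e_i \cdot \tfrac{1}{2} e_i e_n \;=\; -\tfrac{n-1}{2}\, e_n \;=\; -\alpha_n e_n,
\end{equation*}
where $\alpha_n = (n-1)/2$ as in \eqref{alphan}. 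Combining this with the surviving transport terms $\sum_{i=1}^{n-1} e_i y \partial_{x_i}$ and $e_n y \partial_y$ produces \eqref{Eq:Dirac_Hyper}.

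There is no real obstacle: the only content is the observation that the ``spin-connection correction'' $\frac{1}{2} d\tau(e_i e_n)$, when acted upon by $e_i$ on the left and summed, collapses to a scalar multiple of $e_n$ by the defining Clifford anticommutation relations. The calculation is short enough that I would present it as a two-line display rather than breaking it into subcases.
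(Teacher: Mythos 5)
Your proof is correct and matches the paper's (implicit) approach exactly: the paper states this corollary as an ``immediate consequence'' of Lemma \ref{dtaurep} without spelling out the computation, and what you've written is precisely that computation---substitute $d\tau(e_ie_n)=e_ie_n$ from the lemma, then use $e_i^2=-1$ and sum over $i$ to collapse the connection term to $-\alpha_n e_n$.
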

	The spectral problem we consider in this section refers to a second-order problem
\begin{equation} \label{Eq:Q_operator}
	Q_s(\mathcal{D}_H) = \mathcal{D}_H^2 - 2s_0\mathcal{D}_H + |s|^2,
\end{equation}
in which the square of the Dirac operator $\mathcal{D}_H$ also appears. Since we now have an explicit expression for the Dirac operator in $\mathcal{D}_H$ as shown in (\ref{Eq:Dirac_Hyper}), we will compute $\mathcal{D}_H^2$ formally when the operators act on smooth functions.

	\begin{lemma}\label{HYPDDUE}
Let $n\in \mathbb{N}$ and with $n\geq2$.
		The square of $\mathcal{D}_H$, in (\ref{Eq:Dirac_Hyper}), is given by
		\begin{equation}
			\mathcal{D}_H^2 = -y^2\Delta_n + e_{n}\mathcal{D}_H +2\alpha_ny\partial_yF - \beta_n,
		\end{equation}
		where
\begin{equation}
\Delta_n=\sum_{i=1}^{n-1}\partial_{x_i}^2+\partial_y^2
\end{equation}
and
\begin{equation}\label{BETAN}
\beta_n :=\alpha_n+\alpha_n^2= \frac{n-1}{2} + \frac{(n-1)^2}{4}.
\end{equation}
	\end{lemma}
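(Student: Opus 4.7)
The plan is a direct computation. Split the operator into three blocks
\[
A := \sum_{i=1}^{n-1} e_i\, y\,\partial_{x_i}, \qquad B := -\alpha_n e_n, \qquad C := e_n\, y\,\partial_y,
\]
so that $\mathcal{D}_H = A+B+C$, and expand
\[
\mathcal{D}_H^2 \;=\; A^2 + B^2 + C^2 + (AB+BA) + (AC+CA) + (BC+CB).
\]
I would then compute each of the nine pieces separately. The Clifford identities $e_i^2 = -1$ and $e_ie_j+e_je_i=0$ for $i\neq j$ give the ``diagonal'' pieces:
\[
A^2 \;=\; -\,y^2\!\sum_{i=1}^{n-1}\partial_{x_i}^2, \qquad B^2 \;=\; -\alpha_n^2,
\]
while $C^2 = e_n y\partial_y(e_n y\partial_y) = -y\partial_y(y\partial_y) = -y\partial_y - y^2\partial_y^2$ since $\partial_y$ does not commute with multiplication by $y$.

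Next I would handle the three cross-terms. The mixed $A$--$B$ term vanishes, $AB+BA = -\alpha_n\sum_i(e_ie_n+e_ne_i)\,y\partial_{x_i} = 0$. The $B$--$C$ term gives the scalar first-order contribution $BC + CB = -\alpha_n e_n\cdot e_n y\partial_y + e_n y\partial_y(-\alpha_n e_n) = 2\alpha_n\, y\partial_y$. The $A$--$C$ term is the one requiring attention to the $\partial_y(y\,\bullet)$ Leibniz term:
\[
AC \;=\; \sum_{i}e_ie_n\,y^2\partial_{x_i}\partial_y, \qquad CA \;=\; \sum_{i}e_ne_i\,y\partial_{x_i} + \sum_{i}e_ne_i\,y^2\partial_{x_i}\partial_y,
\]
and since $e_ie_n = -e_ne_i$ for $i\leq n-1$, the second-order terms cancel, leaving $AC+CA = \sum_{i=1}^{n-1}e_ne_i\, y\partial_{x_i}$.

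Adding everything,
\[
\mathcal{D}_H^2 \;=\; -y^2\Delta_n \;+\; \sum_{i=1}^{n-1}e_ne_i\,y\partial_{x_i} \;+\; 2\alpha_n\,y\partial_y \;-\;y\partial_y\;-\;\alpha_n^2,
\]
with $\Delta_n = \sum_{i=1}^{n-1}\partial_{x_i}^2 + \partial_y^2$. The last step is to recognise the mixed first-order block as a multiple of $e_n\mathcal{D}_H$: left-multiplying \eqref{Eq:Dirac_Hyper} by $e_n$ and using $e_n^2=-1$ yields
\[
e_n \mathcal{D}_H \;=\; \sum_{i=1}^{n-1}e_ne_i\,y\partial_{x_i} \;+\;\alpha_n \;-\; y\partial_y,
\]
so $\sum_{i}e_ne_i\,y\partial_{x_i} = e_n\mathcal{D}_H - \alpha_n + y\partial_y$. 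Substituting and using $\beta_n = \alpha_n + \alpha_n^2$ from \eqref{BETAN} collapses the scalar and $y\partial_y$ pieces to $-\beta_n$ (plus the advertised $2\alpha_n y\partial_y$ term) and gives the claimed identity.

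There is no conceptual obstacle: the proof is a bookkeeping exercise. The only trap is the non-commutation of $\partial_y$ with multiplication by $y$, which is what produces both the first-order piece in $C^2$ and the first-order piece in $AC+CA$; careful accounting of these two Leibniz contributions is what ultimately pairs with $e_n\mathcal{D}_H$ to yield the stated compact form.
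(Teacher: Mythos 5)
Your computation is correct and matches the paper's proof in substance: both are direct expansions of the nine product terms, with the same cancellation of the $A$--$B$ cross-term and the mixed $y^2\partial_{x_i}\partial_y$ pieces, and both conclude by rewriting the remaining first-order block $\sum_i e_ne_i\,y\partial_{x_i} - y\partial_y$ as $e_n(\mathcal{D}_H + \alpha_n e_n)F = e_n\mathcal{D}_H F - \alpha_n F$. The $A,B,C$ decomposition you use is just a cleaner way of organizing the identical bookkeeping.
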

	\begin{proof}
		The result follows from direct computations, indeed for $F\in C^2(\Omega,\BR_n)$, we have
		\begin{align*}
			\mathcal{D}_H^2F & = \p{\sum_{i=1}^{n-1} e_iy\partial_{x_i} - \alpha_ne_{n} + e_{n}y\partial_y}\p{\sum_{j=1}^{n-1} e_jy\partial_{x_j} - \alpha_ne_{n} + e_{n}y\partial_y}F \\ & = \p{\sum_{i=1}^{n-1} e_iy\partial_{x_i}}\p{\sum_{j=1}^{n-1} e_jy\partial_{x_j}}F - \p{\sum_{i=1}^{n-1} e_iy\partial_{x_i}}\p{\alpha_ne_{n}}F + \p{\sum_{i=1}^{n-1} e_iy\partial_{x_i}}\p{e_{n}y\partial_y}F
\\
& \hspace*{0.5cm} - \p{\alpha_ne_{n}}\p{\sum_{j=1}^{n-1} e_jy\partial_{x_j}}F + \p{\alpha_ne_{n}}\p{\alpha_ne_{n}}F - \p{\alpha_ne_{n}}\p{e_{n}y\partial_y}F
\\
& \hspace*{1cm} + \p{e_{n}y\partial_y}\p{\sum_{j=1}^{n-1} e_jy\partial_{x_j}}F - \p{e_{n}y\partial_y}\p{\alpha_ne_{n}}F + \p{e_{n}y\partial_y}\p{e_{n}y\partial_y}F
 \end{align*}
 so that
 \begin{align*}
			\mathcal{D}_H^2F & =
  - \sum_{i=1}^{n-1} y^2\partial_{x_i}^2F + \sum_{i=1}^{n-1} e_ie_{n}y^2\partial_{x_i}\partial_yF + e_{n}y\partial_y\p{\sum_{j=1}^{n-1} e_jy\partial_{x_j}}F - \alpha_n^2F + \alpha_ny\partial_yF \\ & \hspace*{1cm} + y\alpha_n\partial_yF - y^2\partial_y^2F - y\partial_yF .
 \end{align*}
 With some further simplifications we get
 \begin{align*}
			\mathcal{D}_H^2F & = - \sum_{i=1}^{n-1} y^2\partial_{x_i}^2F - y^2\partial_y^2F
+ e_{n}\p{e_{n}y\partial_yF + y \sum_{i=1}^{n-1} e_i\partial_{x_i}F} - \alpha_n^2F
+2\alpha_ny\partial_yF
\\
&
= - \sum_{i=1}^{n-1} y^2\partial_{x_i}^2F - y^2\partial_y^2F + e_{n}\p{\mathcal{D}_H + \alpha_ne_{n}}F - \alpha_n^2F +2\alpha_ny\partial_yF
\\ &
= -y^2\Delta_n F + e_{n}\mathcal{D}_HF - \beta_nF+2\alpha_ny\partial_yF ,
		\end{align*}
where $\beta_n$ are given by (\ref{BETAN}) and this concludes the proof.
	\end{proof}

\begin{remark}\label{DOMAINS}
We will consider the hyperbolic Dirac operator on some domains that can be bounded or unbounded,
as an example of unbounded domain in one direction like the infinite strip  $\BR_{a,b}^{n}$, is defined as
		\begin{equation*}\label{HYPSTRP}
			\BR_{a,b}^{n} := \set{(x,y) \midbar x=\p{x_1,\cdots,x_{n-1}}\in\BR^{n-1}, \quad 0<a<y<b}
		\end{equation*}
 where $0<a<b< +\infty$ are two given constants.
We can also consider a bounded domain with smooth boundary $\Omega\subset \mathbb{R}_+^{n}$ denoted by $\partial \Omega$.
Since the coefficients of $\mathcal{D}_H$ and of the second order operator in $\mathcal{D}_H^2$ are unbounded functions on $\mathbb{R}_+^{n}$ in order to apply our techniques we assume that $\Omega$ is such that there exist constants $0< m<M<+\infty$ such that
\begin{equation}\label{CONTnM1}
\inf_{(x,y)\in\Omega}y=m,\ \ \ \ \sup_{(x,y)\in\Omega}y=M,
\end{equation}

Observe that in the case of the strip $\BR_{a,b}^{n}$ it is $m=a$ and $b=M$.

	\end{remark}	
We will study the elliptic boundary value problem with homogeneous Dirichlet boundary condition
\begin{equation} \label{Eq:BVProblem}
		\begin{cases}
			Q_s(\mathcal{D}_H)F = f,\ \ {\rm in} \ \ \Omega & \\ F\vert_{\partial \Omega} = 0
		\end{cases}
	\end{equation}
	and the invertibility of the operator $Q_s(\mathcal{D}_H)$ in (\ref{Eq:Q_operator}) in the weak
	sense.
 That is, we ask whether the problem (\ref{Eq:BVProblem}) admits a unique
	weak solution for every $f\in L^2(\Omega, \mathbb{R}_n)$.
Since the operator $Q_s(\mathcal{D}_H)$ is elliptic, by standard properties of elliptic equations, the weak solution is also a strong solution.

\subsection{Weak formulation of the Dirichlet boundary value problem (\ref{Eq:BVProblem})}
We work in the space of square-integrable functions on $\Omega$ with values in $\mathbb{R}_n$. For the current section we use the Euclidean volume $\dif x := \dif x_1 \cdots\dif x_{n-1}\dif y$.
We introduce the inner product
\begin{equation}\label{SCLAL2}
			\inner{F,G}_2 := \inner{F,G}_{L^2(\Omega,\mathbb{R}_n)} = \int_{\Omega} \overline{F}G \: \dif x.
		\end{equation}
and the  scalar product
\begin{equation}\label{ScSCLAL2}
			{\rm Sc}\inner{F,G}_2 := {\rm Sc}\inner{F,G}_{L^2(\Omega,\mathbb{R}_n)} = {\rm Sc}\int_{\Omega} \overline{F}G \: \dif x.
		\end{equation}
The first one will be used for the weal formulation of the boundary
 value problem and the second one is used to define $L^2(\Omega,\mathbb{R}_n)$ and
the norm squared in $L^2(\Omega,\mathbb{R}_n)$ will be denoted by
$$
\|F\|_2^2={\rm Sc}\inner{F,F}_{L^2(\Omega,\mathbb{R}_n)}:={\rm Sc}\int_{\Omega}\overline{F}F \dif x.
$$
We denote by $C^\infty_c(\Omega,\mathbb{R}_n)$ set of Clifford-valued infinitely differentiable functions with compact support on $\Omega$.
Because of the homogeneous boundary conditions in problem (\ref{Eq:BVProblem}) we  will work on the space $H^1_0(\Omega,\mathbb{R}_n)$ which is the closure of $C^\infty_c(\Omega,\mathbb{R}_n)$  in the norm
 $$
\|F\|_{H^1(\Omega,\mathbb{R}_n)}^2 \coloneqq {\rm Sc}\inner{F,F}_{H^1(\Omega,\mathbb{R}_n)}
\coloneqq {\rm Sc}\inner{F,F}_{L^2(\Omega,\mathbb{R}_n)} + {\rm Sc}\sum_{j=1}^n \inner{\partial_{x_j}F,\partial_{x_j}F}_{L^2(\Omega,\mathbb{R}_n)}.
$$
This norm associated with the scalar product
$$
{\rm Sc}\inner{F,G}_{H^1(\Omega,\mathbb{R}_n)} \coloneqq {\rm Sc}\inner{F,G}_{H^1(\Omega,\mathbb{R}_n)}
\coloneqq {\rm Sc}\inner{F,G}_{L^2} + {\rm Sc}\sum_{j=1}^n \inner{\partial_{x_j}F,\partial_{x_j}G}_{L^2(\Omega,\mathbb{R}_n)}.
$$
We will use the scalar product
 $$
\inner{F,G}_{H^1} \coloneqq \inner{F,G}_{H^1(\Omega,\mathbb{R}_n)}
\coloneqq \inner{F,G}_{L^2} + \sum_{j=1}^n \inner{\partial_{x_j}F,\partial_{x_j}G}_{L^2}
$$
for the formulation of the boundary value problem.
In $H_0^1(\Omega, \mathbb{R}_n)$ we denote the norm as
$$
\|F\|_{H_0^1}:= \|F\|_{H_0^1(\Omega, \mathbb{R}_n)}:= \norm{F}_2^2+\norm{F}_D^2 ,
 $$
where we use the notation $\norm{F}_2^2$ for   $\norm{F}_{L^2}^2$ and
	$
		\norm{F}_D^2 = \sum_{i=1}^{n} \norm{\partial_{x_i} F}_2^2.
	$
\begin{remark}\label{INTBYPARTFORM} We recall the integration by part formula associated with the operator $A$ defined by
$$
AF:=-\sum_{i=1}^n a_i\partial_{x_i}\Big(a_i\partial_{x_i}F\Big)
$$
where the coefficients $a_i(x)$ are real-valued smooth functions of the variables $x=(x_1,...,x_n)\in \Omega\subseteq \mathbb{R}^{n}$.
Denoting by $\vec{\nu}=(\nu_1,\dots,\nu_n)$ the outer unit normal vector to the boundary smooth boundary  $\partial\Omega$ of $\Omega$, we have
\begin{align*}
-\sum_{i=1}^n\Big\langle a_i\partial_{x_i}
\Big(a_i\partial_{x_i}F\Big),G\Big\rangle_{L^2(\Omega, \mathbb{R}_n)}
=\sum_{i=1}^n\Big\langle a_i\partial_{x_i}F,\partial_{x_i}(a_iG)\Big\rangle_{L^2(\Omega, \mathbb{R}_n)}-\sum_{i=1}^n\Big\langle\nu_ia_i
\partial_{x_i}F,a_iG\Big\rangle_{L^2(\partial\Omega, \mathbb{R}_n)} .
\end{align*}
\end{remark}
To obtain the weak formulation of the boundary value problem
we integrate by parts the second order terms of $\mathcal{D}_H^2$ and for $F,G\in C^\infty_c(\Omega,\mathbb{R}_n)$, we get
	\begin{align*}
		-\sum_{i=1}^{n-1}\int_{\Omega} y^2\partial_{x_i}^2\overline{F}G \dif x &- \int_{\Omega} y^2\partial_y^2\overline{F}G \dif x
\\
&
=-\sum_{i=1}^{n-1}\int_{\Omega} y\partial_{x_i} (y\partial_{x_i}\overline{F}) G \dif x - \int_{\Omega} \partial_y^2\overline{F} (y^2G) \dif x
\\
& =
\sum_{i=1}^{n-1}\int_{\Omega} y^2\partial_{x_i}\overline{F}\partial_{x_i}G \dif x
+ \int_{\Omega} y^2\partial_y\overline{F}\partial_yG \dif x  + \int_{\Omega} 2y\partial_y\overline{F}G \dif x.
	\end{align*}
Now for $F,G\in H_0^1(\Omega, \mathbb{R}_n)$ and for every, $s\in\BR^{n+1}$ we consider the  sesquilinear form associated with the boundary value problem (\ref{Eq:BVProblem}) and we obtain:
	\begin{align}\label{THEFORM}
		q_s(F,G) & = \int_{\Omega} \sum_{i=1}^{n-1} y^2\partial_{x_i}\overline{F}\partial_{x_i}G \dif x + \int_{\Omega} y^2\partial_y\overline{F}\partial_yG \dif x
+ 2(1+\alpha_n)\int_{\Omega} y\partial_y\overline{F}G \dif x \nonumber
\\
&
 \hspace*{5cm} + \int_{\Omega} \overline{(e_{n} - 2s_0)\mathcal{D}_HF}G \dif x + \int_{\Omega} (\modulo{s}^2 - \beta_n)\overline{F}G \dif x.
	\end{align}
	\begin{problem}
Let $\mathcal{D}_H$ be the Dirac operator in (\ref{Eq:Dirac_Hyper}), let $\beta_n$ be the constants defined in (\ref{BETAN}) and
	using the scalar product $\inner{F,G}_2$ defined in (\ref{SCLAL2}) we write the  sesquilinear form
$q_s(F,G)$ defined in (\ref{THEFORM}) as
\begin{align}\label{FORMWITHSCALPROD}
		q_s(F,G) & = \inner{ \sum_{i=1}^{n-1} y^2\partial_{x_i}F,\partial_{x_i}G}_2
+ \inner{ y^2\partial_yF,\partial_yG}_2
 + 2(1+\alpha_n)\inner{ y\partial_yF,G}_2\nonumber
  \\ & \hspace*{5cm}
  + \inner{ (e_{n} - 2s_0)\mathcal{D}_HF,G}_2
  + \inner{ (\modulo{s}^2 - \beta_n)F,G}_2,
	\end{align}
with $\textup{dom } (q_s):= H^1_0(\Omega, \mathbb{R}_n) \times H^1_0(\Omega, \mathbb{R}_n)$.
Show that for some values of the spectral parameter
$s\in\mathbb{R}^{n+1},$ for every $f\in L^2(\Omega, \mathbb{R}_n)$ there exists a unique solution $F_f\in H^1_0(\Omega, \mathbb{R}_n)$ such that
	\begin{equation*}
		q_s(F_f,G) = \inner{f,G}_2, \quad \text{for all} \ G\in H^1_0(\Omega, \mathbb{R}_n).
	\end{equation*}
	Furthermore, determine $L^2$- and $D$-estimates of $F_f$, depending on the parameter $s\in \mathbb{R}^{n+1}$.
\end{problem}
We need the following preliminary lemma.
\begin{lemma}\label{STIMADF}
Let $\mathcal{D}_H$ be the Dirac operator in (\ref{Eq:Dirac_Hyper}). Then, we have
\begin{equation} \label{ESTIM:Dirac_Hyper}
		\|\mathcal{D}_HF\|_2=({\rm Sc}\inner{\mathcal{D}_HF,\mathcal{D}_HF}_2)^{1/2}
\leq  M\sqrt{n}\|F\|_D + \alpha_n\|F\|_2,
	\end{equation}
where $\alpha_n$ is given by (\ref{alphan}) and $M$ is as in \eqref{CONTnM1}.
\end{lemma}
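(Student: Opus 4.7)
The plan is to bound $\|\mathcal{D}_H F\|_2$ by splitting the operator into its three natural pieces and estimating each with triangle inequality in $L^2$, followed by a Cauchy–Schwarz step.

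First, I would write
\begin{equation*}
\mathcal{D}_H F = \sum_{i=1}^{n-1} e_i y\partial_{x_i} F \,+\, e_n y\partial_y F \,-\, \alpha_n e_n F,
\end{equation*}
and apply the triangle inequality for $\|\cdot\|_2$ to obtain
\begin{equation*}
\|\mathcal{D}_H F\|_2 \;\leq\; \sum_{i=1}^{n-1} \|e_i y\partial_{x_i} F\|_2 \,+\, \|e_n y\partial_y F\|_2 \,+\, \alpha_n \|e_n F\|_2.
\end{equation*}

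Next I would exploit the key isometric property from Lemma \ref{lem_Properties}(ii): for any imaginary unit $e_i \in \mathbb{R}^{n+1}$ and any $G \in V$ one has $\|e_i G\| = |e_i|\,\|G\| = \|G\|$. Since $y$ is a real-valued scalar function and $e_i$ is a unit paravector, each term of the form $\|e_i y \partial_{x_i} F\|_2$ equals $\|y \partial_{x_i} F\|_2$. Combining this with the pointwise bound $y \leq M$ on $\Omega$ (from \eqref{CONTnM1} of Remark \ref{DOMAINS}) gives
\begin{equation*}
\|e_i y \partial_{x_i} F\|_2 \;\leq\; M\,\|\partial_{x_i} F\|_2, \qquad i=1,\ldots,n-1,
\end{equation*}
and similarly $\|e_n y \partial_y F\|_2 \leq M\,\|\partial_y F\|_2$, while $\alpha_n \|e_n F\|_2 = \alpha_n \|F\|_2$.

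Combining these into one estimate yields
\begin{equation*}
\|\mathcal{D}_H F\|_2 \;\leq\; M \sum_{i=1}^{n} \|\partial_i F\|_2 \,+\, \alpha_n \|F\|_2,
\end{equation*}
with the convention $\partial_n = \partial_y$. The final step is a straightforward application of the Cauchy–Schwarz inequality in $\mathbb{R}^n$ to the vector $(\|\partial_1 F\|_2, \ldots, \|\partial_n F\|_2)$ against $(1,\ldots,1)$, producing
\begin{equation*}
\sum_{i=1}^{n} \|\partial_i F\|_2 \;\leq\; \sqrt{n}\,\bigg(\sum_{i=1}^n \|\partial_i F\|_2^2\bigg)^{1/2} \;=\; \sqrt{n}\,\|F\|_D,
\end{equation*}
which gives the claim. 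There is no genuine obstacle here; the only subtlety is keeping track of the paravector isometry \eqref{lem_Properties}(ii) so that multiplication by $e_i$ does not inflate norms, and recognizing that the hyperbolic metric weight $y$ is harmless on $\Omega$ thanks to the uniform upper bound $M$.
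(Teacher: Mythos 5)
Your proof is correct and reaches the same estimate as the paper. The route is slightly cleaner: you apply the triangle inequality directly to $\|\mathcal{D}_H F\|_2$, use the isometry $|e_i|=1$ together with the pointwise bound $y\leq M$ to dispose of the Clifford prefactors, and finish with Cauchy--Schwarz in $\mathbb{R}^n$. The paper instead expands $\mathrm{Sc}\langle\mathcal{D}_H F,\mathcal{D}_H F\rangle_2$ into four cross-terms (treating $\mathcal{D}_H F$ as a sum of the ``derivative part'' and $-\alpha_n e_n F$), bounds each term, and then recognizes the resulting expression $nM^2\|F\|_D^2 + 2\sqrt{n}\alpha_n M\|F\|_D\|F\|_2 + \alpha_n^2\|F\|_2^2$ as a perfect square before taking the root. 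Both rely on the same ingredients (Lemma~\ref{lem_Properties} and the bound $\sup_\Omega y = M$), and the key sub-estimate $\|\sum_i e_i y\partial_{x_i}F + e_n y\partial_y F\|_2 \leq M\sqrt{n}\|F\|_D$ appears verbatim inside the paper's argument; your version simply avoids the unnecessary detour through the inner product and the perfect-square factorization. One small technical point worth making explicit in either version: the norm identity $\|e_i y\,\partial_{x_i}F\|_2 = \|y\,\partial_{x_i}F\|_2$ is not literally Lemma~\ref{lem_Properties}(ii), since $y$ is a non-constant scalar function; it follows instead from the pointwise computation $\overline{e_i}e_i = 1$ inside the defining integral, after which $y\leq M$ gives the desired bound.
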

\begin{proof}
It follows from
\begin{align*}
{\rm Sc}\inner{\mathcal{D}_HF,\mathcal{D}_HF} &= {\rm Sc}\inner{\sum_{i=1}^{n-1} e_iy\partial_{x_i}F + e_{n}y\partial_yF - \alpha_ne_{n}F,\sum_{i=1}^{n-1} e_iy\partial_{x_i}F + e_{n}y\partial_yF - \alpha_ne_{n}F}_{2}
\\
&
=
{\rm Sc}\inner{\sum_{i=1}^{n-1} e_iy\partial_{x_i}F + e_{n}y\partial_yF  ,\sum_{i=1}^{n-1} e_iy\partial_{x_i}F + e_{n}y\partial_yF }_{2}
\\
&
-{\rm Sc}\inner{\sum_{i=1}^{n-1} e_iy\partial_{x_i}F + e_{n}y\partial_yF , \alpha_ne_{n}F}_{2}
\\
&
-{\rm Sc} \inner{  \alpha_ne_{n}F,\sum_{i=1}^{n-1} e_iy\partial_{x_i}F + e_{n}y\partial_yF }_{2}
\\
&
+{\rm Sc}\inner{ \alpha_ne_{n}F, \alpha_ne_{n}F}_{2}
\end{align*}
so that using the properties of the scalar product in Lemma \ref{lem_Properties} for the four terms we have
\begin{align*}
\Big|{\rm Sc}\inner{\sum_{i=1}^{n-1} e_iy\partial_{x_i}F + e_{n}y\partial_yF  ,\sum_{i=1}^{n-1} e_iy\partial_{x_i}F + e_{n}y\partial_yF }_{2}\Big|
&\leq \|\sum_{i=1}^{n-1} e_iy\partial_{x_i}F + e_{n}y\partial_yF \|_2^2
\\
& \leq  nM^2\|F\|_D^2
\end{align*}
which is obtained by observing that
\begin{align*}
\|\sum_{i=1}^{n-1} e_iy\partial_{x_i}F + e_{n}y\partial_yF \|_2&\leq
\sum_{i=1}^{n-1}\| e_iy\partial_{x_i}F\|_2 +\| e_{n}y\partial_yF \|_2
\\
&
=\sum_{i=1}^{n-1}|e_iy|\| \partial_{x_i}F\|_2 +|ye_{n}|\|\partial_yF \|_2
\leq\\
&
\leq M(\sum_{i=1}^{n-1}\| \partial_{x_i}F\|_2 +\|\partial_yF \|_2)
\\
&
\leq M (\sum_{i=1}^{n}\| \partial_{x_i}F\|_2)  \ \ \ setting  \ \ y=x_n
\\
&
\leq  M (\sum_{i=1}^{n}1^2)^{1/2}  \Big(\sum_{i=1}^{n}\|\partial_{x_i}F\|^2_2\Big))^{1/2}
\\
&
\leq M\sqrt{n}\|F\|_D.
\end{align*}

The second term is estimated by
\begin{align*}
\Big|{\rm Sc}\inner{\sum_{i=1}^{n-1} e_iy\partial_{x_i}F + e_{n}y\partial_yF , \alpha_ne_{n}F}_{2}\Big|
&
\leq  \|\sum_{i=1}^{n-1} e_iy\partial_{x_i}F + e_{n}y\partial_yF\|_2\|\alpha_ne_{n}F\|_2
\\
&
\leq  \sqrt{n}\alpha_nM \|F\|_D\|e_{n}F\|_2
\\
&
=\sqrt{n}\alpha_nM \|F\|_D\|F\|_2.
\end{align*}
Similarly we have
\begin{align*}
\Big| {\rm Sc}\inner{  \alpha_ne_{n}F,\sum_{i=1}^{n-1} e_iy\partial_{x_i}F + e_{n}y\partial_yF }_{2}\Big|
\leq \sqrt{n}\alpha_nM \|F\|_D\|F\|_2
\end{align*}
and finally
\begin{align*}
\Big|{\rm Sc}\inner{ \alpha_ne_{n}F, \alpha_ne_{n}F}_{2}\Big|\leq\alpha_n^2\|F\|_2^2.
\end{align*}
The estimates for $\|\mathcal{D}_HF\|_2^2$, using the above computations, becomes
\begin{align*}
\|\mathcal{D}_HF\|_2^2&\leq  nM^2\|F\|_D^2+2\sqrt{n}\alpha_nM \|F\|_D\|F\|_2+  \alpha_n^2\|F\|_2^2
=\Big(\sqrt{n} M\|F\|_D^2+ \alpha_n\|F\|_2^2\Big)^2,
\end{align*}
from which we get the statement.
\end{proof}

\subsection{Estimates without Poincaré inequality}
Using Lax-Milgram techniques we obtain some sufficient
 conditions in order to identify part of the $S$-resolvent set.

 Some on these inequalities can be expressed solely in terms of the constants $m$ and $M$, as well as the dimension of the space $n$. Therefore, we need to distinguish between two cases.

 More precisely, in the  sesquilinear form
(\ref{FORMWITHSCALPROD}) we consider the term $\inner{ (\modulo{s}^2 - \beta_n)F,G}_2$ in which we replace $G=F$, and then we have:
\begin{itemize}
\item[(I)]
 For $\modulo{s}^2 - \beta_n\not=0$ we can use in $H_0^1$ the norm
 $\|F\|_{H_0^1(\Omega, \mathbb{R}_n)}:= \norm{F}_2^2+\norm{F}_D^2 $.
 \item[(II)]
 For $\modulo{s}^2 - \beta_n=0$ we cannot use in $H_0^1$ the norm
 $\|F\|_{H_0^1(\Omega, \mathbb{R}_n)}:= \norm{F}_2^2+\norm{F}_D^2 $,
 but we have to use the Poincaré inequality in which the Poincaré constant appears. This constant is not explicitly computed for arbitrary domains, so the estimates we obtain depend on a positive constant that is not a priori known.
 The advantage is that the term $\norm{F}_D^2 $ gives a norm on $H_0^1$.
\end{itemize}

	\begin{theorem} \label{Th:Main_Hyper}
		Let $\Omega\in \mathbb{R}^{n}_+$ be as in Remark \ref{DOMAINS}
		 and let $\mathcal{D}_H$ be the Dirac operator on the hyperbolic space as in (\ref{Eq:Dirac_Hyper}).
Let $s\in\BR^{n+1}$ be such that
\begin{align}\label{CONDWITHPUTCP}
&\Big||s|^2 - \beta_n\Big| - \alpha_n\sqrt{1+ 4s_0^2}+m^2>0,
\\
&
4m^2\Big||s|^2 - \beta_n\Big|-4nM^2 s_0^2
 - 4\Big(m^2\alpha_n +M^2(1+\alpha_n)\sqrt{n}\Big)\sqrt{1+4s_0^2}\nonumber
\\
& >M^2\Big(4(1+\alpha_n)^2+n\Big),\nonumber
\end{align}
for $|s|^2 - \beta_n\not=0$,
where $\beta_n:=\alpha_n+ \alpha_n^2$ and $\alpha_n$ are the constants given in (\ref{alphan}),  and the constants $M>m>0$ are explicitly given by (\ref{CONTnM1}).
		Then, for every $f\in L^2(\Omega)$ there exists a unique $F_f\in H_0^1(\Omega)$ such that
		\begin{equation}
			q_s(F_f,G) = \inner{f,G}_2, \qquad \text{for all } G\in H_0^1(\Omega),
		\end{equation}
where $q_s(F_f,G)$ is the sesquilinear form given by (\ref{FORMWITHSCALPROD}) associated with the Dirichlet problem.
		Moreover, this solution satisfies the bounds
		\begin{equation} \label{Eq:norm_est_hyper}
			\|Q_s(\mathcal{D}_H)^{-1}f\|_2\leq \frac{1}{K_{n,m,M}(s)}\, \norm{f}_2,
		\ \ \ {\rm and}\ \
			\|Q_s(\mathcal{D}_H)^{-1}f\|_D\leq \frac{1 }{K_{n,m,M}(s)}\,\norm{f}_2,
		\end{equation}
where
\begin{align}\label{KAPPAnm}
K_{n,m,M}(s):&=\frac{1}{2}\Big||s|^2 - \beta_n\Big| -\frac{1}{2} \alpha_n\sqrt{1+ 4s_0^2}
+\frac{1}{2}m^2\nonumber
\\
&
-\frac{1}{2}\sqrt{\Big( \Big||s|^2 - \beta_n\Big| - \alpha_n\sqrt{1+ 4s_0^2}-m^2\Big)^2
+M^2\Big(\textcolor{black}{ 2(1+\alpha_n)}+\textcolor{black}{\sqrt{n}}\sqrt{1+ 4s_0^2}\Big)^2}.
\end{align}
	\end{theorem}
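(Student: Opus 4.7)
The plan is to apply the Clifford Lax-Milgram lemma (Lemma \ref{lem_Lax_Milgram}) to the sesquilinear form $q_s$ on the Clifford Hilbert module $H_0^1(\Omega)$. The two conditions to verify are continuity $|q_s(F,G)|\leq C\|F\|_{H^1}\|G\|_{H^1}$ and coercivity $\mathrm{Sc}(q_s(F,F))\geq \kappa \|F\|_{H^1}^2$, with $\kappa=K_{n,m,M}(s)$. The continuity is immediate: every term of (\ref{FORMWITHSCALPROD}) is bounded using $y\leq M$, Lemma \ref{STIMADF}, and the module-Cauchy-Schwarz inequality in Lemma \ref{lem_Properties}, so the precise value of $C$ is not needed for the existence/uniqueness statement.

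The coercivity is the heart of the argument. Starting from (\ref{FORMWITHSCALPROD}) with $G=F$, I bound each summand from below. The two second-order terms contribute $\sum_{i=1}^{n-1}\int y^2|\partial_{x_i}F|^2\,dx+\int y^2|\partial_yF|^2\,dx\geq m^2\|F\|_D^2$ because $y\geq m$ on $\Omega$. The pure $L^2$ term contributes exactly $(|s|^2-\beta_n)\|F\|_2^2$, whose sign may be arbitrary; taking absolute values we keep $||s|^2-\beta_n|\,\|F\|_2^2$ with a possible minus sign that is later absorbed. The two indefinite cross terms are controlled by Lemma \ref{lem_Properties}(iv) and $y\leq M$:
\begin{equation*}
\bigl|2(1+\alpha_n)\,\mathrm{Sc}\inner{y\partial_yF,F}_2\bigr|\leq 2(1+\alpha_n)M\,\|F\|_D\,\|F\|_2,
\end{equation*}
while for the Dirac-type term I use that $e_n-2s_0$ is a paravector of modulus $\sqrt{1+4s_0^2}$ (Lemma \ref{lem_Properties}(ii)) together with Lemma \ref{STIMADF} to obtain
\begin{equation*}
\bigl|\mathrm{Sc}\inner{(e_n-2s_0)\mathcal{D}_HF,F}_2\bigr|\leq \sqrt{1+4s_0^2}\bigl(M\sqrt{n}\,\|F\|_D+\alpha_n\|F\|_2\bigr)\|F\|_2.
\end{equation*}

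Collecting the estimates, $\mathrm{Sc}(q_s(F,F))$ is bounded below by the quadratic form $A\,u^2+B\,v^2-C\,uv$ in the variables $u=\|F\|_D$, $v=\|F\|_2$, with
\begin{equation*}
A=m^2,\qquad B=\bigl||s|^2-\beta_n\bigr|-\alpha_n\sqrt{1+4s_0^2},\qquad C=2(1+\alpha_n)M+M\sqrt{n}\sqrt{1+4s_0^2}.
\end{equation*}
Positive definiteness of the associated symmetric matrix $\bigl(\begin{smallmatrix}A & -C/2\\ -C/2 & B\end{smallmatrix}\bigr)$ is equivalent to positive trace $A+B>0$ together with positive determinant $4AB>C^2$. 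A direct substitution shows these are exactly the two inequalities (\ref{CONDWITHPUTCP}). The smaller eigenvalue equals $\tfrac{A+B}{2}-\tfrac{1}{2}\sqrt{(A-B)^2+C^2}$, which upon substituting the expressions for $A,B,C$ reproduces precisely the constant $K_{n,m,M}(s)$ in (\ref{KAPPAnm}). Hence the form is coercive with coercivity constant $K_{n,m,M}(s)>0$ on the full $H^1$-norm $\|F\|_{H^1}^2=\|F\|_2^2+\|F\|_D^2$.

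With both hypotheses of Lemma \ref{lem_Lax_Milgram} verified, applied to the bounded right-linear functional $\varphi(G)=\inner{f,G}_2$ (for which $\|\mathrm{Sc}(\varphi)\|\leq\|f\|_2$ by Lemma \ref{lem_Properties}(iv)), there exists a unique $F_f\in H_0^1(\Omega)$ with $q_s(F_f,G)=\inner{f,G}_2$ for every $G\in H_0^1(\Omega)$ and $\|F_f\|_{H^1}\leq K_{n,m,M}(s)^{-1}\|f\|_2$. Since $\|F_f\|_2^2+\|F_f\|_D^2=\|F_f\|_{H^1}^2$, this single inequality delivers both estimates in (\ref{Eq:norm_est_hyper}) simultaneously. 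The main technical obstacle is the Dirac cross term: without Lemma \ref{STIMADF} one cannot disentangle $\|\mathcal{D}_HF\|_2$ into $\|F\|_D$ and $\|F\|_2$ components, which is precisely what makes the reduction to a $2\times 2$ positive-definiteness condition possible and yields the explicit shape of (\ref{CONDWITHPUTCP}).
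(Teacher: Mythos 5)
Your proposal is correct and follows essentially the same route as the paper. The only cosmetic difference is in how you extract the coercivity constant from the lower bound $\mathrm{Sc}(q_s(F,F))\geq A\|F\|_D^2 + B\|F\|_2^2 - C\|F\|_D\|F\|_2$: you read off the smallest eigenvalue of the associated $2\times 2$ symmetric matrix, whereas the paper inserts Young's inequality $\|F\|_D\|F\|_2\leq \tfrac{1}{2\delta}\|F\|_D^2+\tfrac{\delta}{2}\|F\|_2^2$ and then picks the $\delta$ that equalizes the two coefficients; these are the same optimization and produce the identical constant $K_{n,m,M}(s)=\tfrac{A+B}{2}-\tfrac12\sqrt{(A-B)^2+C^2}$ and the identical pair of conditions (positive trace, $4AB>C^2$). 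You obtain the two estimates in \eqref{Eq:norm_est_hyper} from the Lax--Milgram bound $\|F_f\|_{H^1}\leq K^{-1}\|f\|_2$ directly, while the paper re-tests the weak identity with $G=F_f$; again equivalent. One imprecision you inherit from the paper and should be aware of: $\mathrm{Sc}\inner{(|s|^2-\beta_n)F,F}_2=(|s|^2-\beta_n)\|F\|_2^2$ has the sign of $|s|^2-\beta_n$, so replacing it by $||s|^2-\beta_n|\,\|F\|_2^2$ in the lower bound is only legitimate when $|s|^2-\beta_n>0$ — saying the minus sign is ``later absorbed'' does not fix this. In practice the second inequality in \eqref{CONDWITHPUTCP} can only hold when $|s|$ is large (so $|s|^2>\beta_n$), but the proof as written, in both your version and the paper's, silently assumes that regime.
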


	\begin{proof}
		We will verify that the  sesquilinear form $q_s$ satisfies the assumptions of Lax-Milgram, see Lemma \ref{lem_Lax_Milgram}.

\medskip
{\em Step 1}. The continuity of the  sesquilinear form $q_s$ follows from the estimates		
		\begin{align}\label{Eq:Sc_cont_hyper}
		|q_s(F,G)| & \leq \Big| \inner{ \sum_{i=1}^{n-1} y^2\partial_{x_i}F,\partial_{x_i}G}_2\Big|
+ |\inner{ y^2\partial_yF,\partial_yG}_2|
 +  2(1+\alpha_n)|\inner{ y\partial_yF,G}_2|\nonumber
  \\ & \hspace*{5cm}
  + |\inner{ (e_{n} - 2s_0)\mathcal{D}_HF,G}_2|
  + |\inner{ (\modulo{s}^2 - \beta_n)F,G}_2|,
	\end{align}
using Lemma \ref{lem_Properties} we have
\begin{align}\label{Eq:Sc_cont_hyper_TWO}
		|q_s(F,G)| &
\leq 2^{n/2}\sum_{i=1}^{n-1} \| y^2\partial_{x_i}F\|_2 \|\partial_{x_i}G\|_2
+ 2^{n/2}\|y^2\partial_yF\|_2 \|\partial_yG\|_2
 + 2^{1+n/2}(1+\alpha_n)\|y\partial_yF\|_2 \|G\|_2 \nonumber
  \\
  &
  +2^{n/2}\|(e_{n} - 2s_0)\mathcal{D}_HF\|_2 \| G\|_2
  + 2^{n/2}\|(\modulo{s}^2 - \beta_n)F\|_2 \| G\|_2
	\end{align}
and by Lemma \ref{STIMADF} we obtain
\begin{align}\label{Eq:Sc_cont_hyper_3}
		|q_s(F,G)| &
\leq 2^{n/2}M^2\sum_{i=1}^{n-1}\|\partial_{x_i}F\|_2 \|\partial_{x_i}G\|_2
+ 2^{n/2}M^2\|\partial_yF\|_2 \|\partial_yG\|_2
 + 2^{1+n/2}M(1+\alpha_n)\|\partial_yF\|_2 \|G\|_2 \nonumber
  \\
  &
  +2^{n/2}\sqrt{1+ 4s_0^2}\, \big(\sqrt{n}M\|F\|_D + \alpha_n\|F\|_2\big)\| G\|_2
  + 2^{n/2}\Big|\modulo{s}^2 - \beta_n\Big|\|F\|_2 \| G\|_2.
	\end{align}
Since
$\|\partial_{x_i}F\|_2 \leq \|F\|_D$ and $\|\partial_{y}F\|_2 \leq \|F\|_D$,
we get
\begin{align}\label{Eq:Sc_cont_hyper_4}
|q_s(F,G)| &
\leq  n 2^{n/2}M^2\|F\|_D\|G\|_D
 +  2^{1+n/2}M (1+\alpha_n)\|F\|_D \|G\|_2 \nonumber
  \\
  &
  +2^{n/2}\sqrt{n}M\sqrt{1+ 4s_0^2}\,\|F\|_D \| G\|_2
  +2^{1+n/2}\alpha_n\sqrt{1+ 4s_0^2}\, \|F\|_2\| G\|_2
  \\
  &
  + 2^{n/2}(\modulo{s}^2 + \beta_n)|\|F\|_2 \| G\|_2,		
	\end{align}
from which we deduce that there exists a positive constant
$C(s,n,M)$ such that
\begin{equation}\label{CONTIN}
|q_s(F,G)| \leq C(s,n,M)\|F\|_{H_0^1}\|G\|_{H_0^1},
\end{equation}
where we have set $\|F\|^2_{H_0^1}:= \norm{F}_2^2+\norm{F}_D^2$,
so the  sesquilinear form is continuous from $H^1_0(\Omega, \mathbb{R}_n) \times H^1_0(\Omega, \mathbb{R}_n)$ to $\mathbb{R}_n$.

\medskip
{\em Step 2}. For the coercivity of the scalar part $\Sc{q_s(F,F)}$ of $q_s(F,F)$ we use again the  inequalities
of Lemma \ref{lem_Properties} in this specific settings and the lower bound $m>0$ to obtain the inequalities
		\begin{align} \label{Eq:Coer_Hyper}
			\Sc{q_s(F,F)} & = {\rm Sc} \inner{ \sum_{i=1}^{n-1} y^2\partial_{x_i}F,\partial_{x_i}F}_2
+ {\rm Sc}\inner{ y^2\partial_yF,\partial_yF}_2
 + \textcolor{black}{ 2(1+\alpha_n)}{\rm Sc}\inner{ y\partial_yF,F}_2\nonumber
  \\
  &
  + {\rm Sc}\inner{ (e_{n} - 2s_0)\mathcal{D}_HF,F}_2
  + {\rm Sc}\inner{ (\modulo{s}^2 - \beta_n)F,F}_2\nonumber
\\
&
\geq m^2\norm{F}_D^2 - \textcolor{black}{ 2(1+\alpha_n)}\|y\partial_yF\|_2\| F\|_2\nonumber
-
\|(e_{n} - 2s_0)\mathcal{D}_HF\|_2\|F\|_2 + \Big|\modulo{s}^2 - \beta_n\Big|\norm{F}_2^2 \nonumber
\\
&
 \geq m^2\norm{F}_D^2 - \textcolor{black}{ 2(1+\alpha_n)}M\norm{F}_D\norm{F}_2 - \sqrt{1+ 4s_0^2}\,\norm{\mathcal{D}_HF}_2\norm{F}_2 + \Big|\modulo{s}^2 - \beta_n\Big|\norm{F}_2^2 .
 \end{align}
 So using Lemma \ref{STIMADF} inequality (\ref{Eq:Coer_Hyper})
  becomes
 \begin{align}
\Sc{q_s(F,F)}  &\geq m^2\|F\|_D^2 - \textcolor{black}{ 2(1+\alpha_n)}M\|F\|_D\|F\|_2\nonumber
\\
&
 - \sqrt{1+ 4s_0^2}\, \Big(\textcolor{black}{\sqrt{n}}M\|F\|_D + \alpha_n\|F\|_2\Big)\|F\|_2
  + ||s|^2 - \beta_n|\|F\|_2^2\nonumber
\end{align}
 and  some more computations
 \begin{align}
\Sc{q_s(F,F)}  &\geq m^2\|F\|_D^2 - \textcolor{black}{ 2(1+\alpha_n)}M\|F\|_D\|F\|_2\nonumber
\\
&
 - \textcolor{black}{\sqrt{n}}\sqrt{1+ 4s_0^2}\, M\|F\|_D \|F\|_2
 - \sqrt{1+ 4s_0^2}\,  \alpha_n\|F\|_2^2
  + \Big||s|^2 - \beta_n\Big|\|F\|_2^2\nonumber
\end{align}
so that
 \begin{align}\label{FINALSCQ}
\Sc{q_s(F,F)}  &\geq m^2\|F\|_D^2 - M\Big(\textcolor{black}{ 2(1+\alpha_n)}+\textcolor{black}{\sqrt{n}}\sqrt{1+ 4s_0^2}\Big)\|F\|_D\|F\|_2\nonumber
 \\
 &
  + \Big(\Big||s|^2 - \beta_n\Big| - \alpha_n\sqrt{1+ 4s_0^2}\Big)\|F\|_2^2.
\end{align}
 We use now Young's inequality, for  $\delta>0$,
		\begin{equation*}
			\norm{F}_D\norm{F}_2 \leq \frac{1}{2\delta}\norm{F}_D^2 + \frac{\delta}{2}\norm{F}_2^2,
		\end{equation*}
		to further estimates for (\ref{FINALSCQ}) give
		\begin{align} \label{Eq:Lower_bound_Hyper}
			\Sc{q_s(F,F)} &\geq
\p{m^2 -  M\Big(\textcolor{black}{ 2(1+\alpha_n)}+\textcolor{black}{\sqrt{n}}\sqrt{1+ 4s_0^2}\Big)\frac{1}{2\delta}}\norm{F}_D^2\nonumber
\\
&
+
\p{\Big||s|^2 - \beta_n\Big| - \alpha_n\sqrt{1+ 4s_0^2} -  M\Big(\textcolor{black}{ 2(1+\alpha_n)}+\textcolor{black}{\sqrt{n}}\sqrt{1+ 4s_0^2}\Big)\frac{\delta}{2}}\norm{F}_2^2.
		\end{align}
Let us choose $\delta>0$ such that the two coefficients of $\norm{F}_D^2$ and $\norm{F}_2^2$ be the same:

\begin{align}\label{EQUA_FOR_DELTA_DIR_HYP}
m^2 -  M\Big(\textcolor{black}{ 2(1+\alpha_n)}&+\textcolor{black}{\sqrt{n}}\sqrt{1+ 4s_0^2}\Big)\frac{1}{2\delta}\nonumber
\\
&=
\Big||s|^2 - \beta_n\Big| - \alpha_n\sqrt{1+ 4s_0^2} -  M\Big(\textcolor{black}{ 2(1+\alpha_n)}+\textcolor{black}{\sqrt{n}}\sqrt{1+ 4s_0^2}\Big)\frac{\delta}{2}
\end{align}
and also
\begin{align*}
M\Big(\textcolor{black}{ 2(1+\alpha_n)}+\textcolor{black}{\sqrt{n}}\sqrt{1+ 4s_0^2}\Big)\delta^2
&-2\delta\Big( \Big||s|^2 - \beta_n\Big| - \alpha_n\sqrt{1+ 4s_0^2}-m^2\Big)
\\
&
-  M\Big(\textcolor{black}{ 2(1+\alpha_n)}+\textcolor{black}{\sqrt{n}}\sqrt{1+ 4s_0^2}\Big)=0.
\end{align*}
Identifying coefficients
\begin{align*}
a = M\Big(\textcolor{black}{ 2(1+\alpha_n)}+\textcolor{black}{\sqrt{n}}\sqrt{1+ 4s_0^2}\Big),\ \ \
b = -2\Big( \Big||s|^2 - \beta_n\Big| - \alpha_n\sqrt{1+ 4s_0^2}-m^2\Big),\ \ \
c = -a,
\end{align*}
by applying the quadratic formula, so
$$
\frac{\Delta}{4}:=\frac{b^2 - 4ac}{4}
=\Big( \Big||s|^2 - \beta_n\Big| - \alpha_n\sqrt{1+ 4s_0^2}-m^2\Big)^2
+M^2\Big(\textcolor{black}{ 2(1+\alpha_n)}+\textcolor{black}{\sqrt{n}}\sqrt{1+ 4s_0^2}\Big)^2
$$
and observing that $\Delta/4$ is always positive, we obtain the two solutions
\begin{equation}\label{HYP_DELTAPIU}
\delta^\pm:=\frac{\Big( \Big||s|^2 - \beta_n\Big| - \alpha_n\sqrt{1+ 4s_0^2}-m^2\Big)\pm\sqrt{\Delta/4}}{M\Big(\textcolor{black}{ 2(1+\alpha_n)}+\textcolor{black}{\sqrt{n}}\sqrt{1+ 4s_0^2}\Big)}.
\end{equation}
We note that $\delta^-$ cannot be considered because it is negative, so we remain with

$$
M\Big(\textcolor{black}{ 2(1+\alpha_n)}+\textcolor{black}{\sqrt{n}}\sqrt{1+ 4s_0^2}\Big)\delta^+:=\Big( \Big||s|^2 - \beta_n\Big| - \alpha_n\sqrt{1+ 4s_0^2}-m^2\Big)+\sqrt{\Delta/4}.
$$
We consider the coefficient of $\norm{F}_2^2$ that for the value $\delta^+$ is equal to the coefficient of $\norm{F}_D^2$
so that the term
$$
K_{n,m,M}(s):=\Big||s|^2 - \beta_n\Big| - \alpha_n\sqrt{1+ 4s_0^2} -  M\Big(\textcolor{black}{ 2(1+\alpha_n)}+\textcolor{black}{\sqrt{n}}\sqrt{1+ 4s_0^2}\Big)\frac{\delta^+}{2}
$$
becomes
$$
K_{n,m,M}(s):=\Big||s|^2 - \beta_n\Big| - \alpha_n\sqrt{1+ 4s_0^2} - \frac{1}{2}
\Big[\Big( \Big||s|^2 - \beta_n\Big| - \alpha_n\sqrt{1+ 4s_0^2}-m^2\Big)+\sqrt{\Delta/4}\Big]
$$
so we finally get
$$
K_{n,m,M}(s):=\frac{1}{2}\Big||s|^2 - \beta_n\Big| -\frac{1}{2} \alpha_n\sqrt{1+ 4s_0^2}
+\frac{1}{2}m^2-\frac{1}{2}\sqrt{\Delta/4}
$$
and with the substitution of the explicit value of  $\Delta/4$ we have the explicit expression

\begin{align}\label{COERCONST}
K_{n,m,M}(s):&=\frac{1}{2}\Big||s|^2 - \beta_n\Big| -\frac{1}{2} \alpha_n\sqrt{1+ 4s_0^2}
+\frac{1}{2}m^2
\\
&
-\frac{1}{2}\sqrt{\Big( \Big||s|^2 - \beta_n\Big| - \alpha_n\sqrt{1+ 4s_0^2}-m^2\Big)^2
+M^2\Big(\textcolor{black}{ 2(1+\alpha_n)}+\textcolor{black}{\sqrt{n}}\sqrt{1+ 4s_0^2}\Big)^2}.
\end{align}
Now we have to impose that the coefficient $K_{n,m,M}(s)$ is positive. Now we observe that
 the
 inequality with the square root
\begin{align}\label{irrtineq}
\Big||s|^2 - \beta_n\Big|& - \alpha_n\sqrt{1+ 4s_0^2}
+m^2\nonumber
\\
&
>\sqrt{\Big( \Big||s|^2 - \beta_n\Big| - \alpha_n\sqrt{1+ 4s_0^2}-m^2\Big)^2
+M^2\Big(\textcolor{black}{ 2(1+\alpha_n)}+\textcolor{black}{\sqrt{n}}\sqrt{1+ 4s_0^2}\Big)^2}
\end{align}
is equivalent to the system of two inequalities
\begin{align}\label{QULI11}
\Big||s|^2 - \beta_n\Big| - \alpha_n\sqrt{1+ 4s_0^2}
+m^2
>0
\end{align}
and
\begin{align}\label{QULI2}
\Big(\Big||s|^2 - \beta_n\Big|& - \alpha_n\sqrt{1+ 4s_0^2}
+m^2\Big)^2
\\
&
>\Big( \Big||s|^2 - \beta_n\Big| - \alpha_n\sqrt{1+ 4s_0^2}-m^2\Big)^2
+M^2\Big(\textcolor{black}{ 2(1+\alpha_n)}+\textcolor{black}{\sqrt{n}}\sqrt{1+ 4s_0^2}\Big)^2,
\end{align}
since the term under the square root is $\Delta/4$ that is always positive.
With further simplifications, inequality (\ref{QULI2}) becomes
\textcolor{black}{
\begin{align*}
4m^2\Big||s|^2 - \beta_n\Big|-4nM^2 s_0^2
 - 4\Big(m^2\alpha_n +M^2(1+\alpha_n)\sqrt{n}\Big)\sqrt{1+4s_0^2}
 >M^2\Big(4(1+\alpha_n)^2+n\Big).
\end{align*}
}
So the coercive estimate (\ref{FINALSCQ}) becomes
\begin{align}
\Sc{q_s(F,F)}  &\geq K_{n,m,M}(s)(\|F\|_D^2+\|F\|_2^2)
\end{align}
with coercive constant $K_{n,m,M}(s)$ given by  (\ref{COERCONST}).

\medskip
{\em Step 3}.  Conclusions from continuity and coercivity conditions.
Hence we have proven that $\Sc{q_s}$ is coercive in $H_0^1(\Omega)\times H_0^1(\Omega)$. Fixing now any function $f\in L^2(\Omega)$, we can consider the corresponding functional
		\begin{equation} \label{Eq:test_funct_hyper}
			\varphi_f(G) := \inner{f,G}_{L^2}, \quad G\in H_0^1(\Omega).
		\end{equation}
		Then by Lemma \ref{lem_Properties} iii), this linear functional is bounded on $H_0^1(\Omega)$, i.e.,
		\begin{equation*}
			\modulo{\varphi_f(G)} = \modulo{\inner{f,G}_{L^2}} \leq 2^{\frac{n}{2}}\norm{f}_2\norm{G}_2 \leq 2^{\frac{n}{2}}\norm{f}_2\norm{G}_{H_0^1}, \quad G\in H_0^1(\Omega).
		\end{equation*}
		Hence, the assumptions of Lemma \ref{lem_Lax_Milgram} are satisfied and there exists a unique weak solution $F_f\in H_0^1(\Omega)$ such that
		\begin{equation*}
			q_s(F_f,G) = \varphi_f(G) = \inner{f,G}_2, \quad \text{for all } G\in H_0^1(\Omega).
		\end{equation*}
		For the $L^2$-estimate in (\ref{Eq:norm_est_hyper}), we test (\ref{Eq:test_funct_hyper}) with $G = F_f$. Using also Lemma \ref{lem_Properties} iv), this gives
		\begin{equation} \label{Eq:norm_sc_bound_hyper}
			\Sc{q_s(F_f,F_f)} = \Sc{\inner{f,F_f}_{L^2}} \leq \norm{f}_2\norm{F_f}_2.
		\end{equation}
		Combining this inequality with the coercivity estimate (\ref{Eq:Lower_bound_Hyper}),
so the coercive estimate (\ref{FINALSCQ}) becomes
\begin{align}
\Sc{q_s(F_f,F_f)}  &\geq K_{n,m,M}(s)(\|F_f\|_D^2+\|F_f\|_2^2)
\end{align}
with coercive constant $K_{n,m,M}(s)$ given by  (\ref{COERCONST}), furthermore gives
		\begin{equation*}
			K_{n,m,M}(s)(\|F_f\|_D^2+\|F_f\|_2^2)\leq \Sc{q_s(F_f,F_f)} \leq \norm{f}_2\norm{F_f}_2.
		\end{equation*}
Recall that $Q_s(\mathcal{D}_H)F = f$ so $F=Q_s(\mathcal{D}_H)^{-1}f$ we get the estimates for the pseudo $S$-resolvent
\begin{equation*}
			K_{n,m,M}(s)(\|Q_s(\mathcal{D}_H)^{-1}f\|_D^2+\|Q_s(\mathcal{D}_H)^{-1}f\|_2^2)\leq \norm{f}_2\norm{Q_s(D)^{-1}f}_2,
		\end{equation*}
from which we get
\begin{equation*}
			K_{n,m,M}(s)\|Q_s(\mathcal{D}_H)^{-1}f\|_2^2\leq \norm{f}_2\norm{Q_s(\mathcal{D}_H)^{-1}f}_2.
		\end{equation*}
and we obtain the estimate
\begin{equation}\label{UNODDD}
			\|Q_s(\mathcal{D}_H)^{-1}f\|_2\leq \frac{1}{K_{n,m,M}(s)}\, \norm{f}_2.
		\end{equation}
We also have
\begin{equation}\label{UNODDDGGG}
			K_{n,m,M}(s)\|Q_s(\mathcal{D}_H)^{-1}f\|_D^2\leq \norm{f}_2 \norm{Q_s(\mathcal{D}_H)^{-1}f}_2.
		\end{equation}
from which, by replacing (\ref{UNODDD}) in (\ref{UNODDDGGG}) we get
\begin{equation}\label{UNODDDGGGHHH}
			K_{n,m,M}(s)\|Q_s(\mathcal{D}_H)^{-1}f\|_D^2\leq  \frac{1}{K_{n,m,M}(s)}\norm{f}_2^2,		\end{equation}
we simplify and we get the other estimate.
\end{proof}

\begin{remark} The inequalities \eqref{CONDWITHPUTCP} define a non-empty set, indeed if we consider $s$ such that $s_0=0$ and $s_1$ is large, they are both satisfied.
Special choices of $n$, $m$, $M$ indicate that the second inequality in \eqref{CONDWITHPUTCP} may imply the first one. In other words, in some special cases the set of points satisfying the second inequality is contained in the set defined by the first inequality. A fine study of the solutions to \eqref{CONDWITHPUTCP} looks rather complicated in the general case and a case-by-case study seems to be more appropriate.
\end{remark}
We now show that the estimates for the pseudo $S$-resolvent operator $Q_s(\mathcal{D}_H)^{-1}$  directly yield the estimates for the right $S$-resolvent operator.
Similarly, we can apply these to obtain the same estimate for the left $S$-resolvent operator for unbounded operators, which has to be defined as
$$
S_L^{-1}(s,T):=Q_s(T)^{-1}\overline{s}-TQ_s(T)^{-1}
$$
in order to be defined on the entire space.

\begin{corollary}[$S$-resolvent estimate]\label{STIMASRESOL}
Let $\Omega\in \mathbb{R}^{n}_+$ be as in Remark \ref{DOMAINS}
		 and let $\mathcal{D}_H$ be the Dirac operator on the hyperbolic space as in (\ref{Eq:Dirac_Hyper}).
Let $s\in\BR^{n+1}$ be such that the two inequalities
(\ref{CONDWITHPUTCP}) hold.
Then, we have the estimate
$$
\|S_R^{-1}(s,\mathcal{D}_H)f\|_2\leq  \Big(|s|+\alpha_n+ M\sqrt{n} \Big)\frac{1}{K_{n,m,M}(s)}\,\,\norm{f}_2,
$$
where the constant $K_{n,m,M}(s)$ is given by
(\ref{KAPPAnm}).
\end{corollary}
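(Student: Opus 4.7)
The plan is to use the definition $S_R^{-1}(s,\mathcal{D}_H) := (\overline{s}-\mathcal{D}_H)Q_s(\mathcal{D}_H)^{-1}$ together with Theorem \ref{Th:Main_Hyper} and Lemma \ref{STIMADF}. Concretely, I would set $F_f := Q_s(\mathcal{D}_H)^{-1}f$ and split
\begin{equation*}
S_R^{-1}(s,\mathcal{D}_H)f = \overline{s}\,F_f - \mathcal{D}_H F_f,
\end{equation*}
so that by the triangle inequality in the $L^2$ norm
\begin{equation*}
\|S_R^{-1}(s,\mathcal{D}_H)f\|_2 \leq \|\overline{s}\,F_f\|_2 + \|\mathcal{D}_H F_f\|_2.
\end{equation*}

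For the first term, since $s\in\mathbb{R}^{n+1}$ is a paravector (hence so is $\overline{s}$ with $|\overline{s}|=|s|$), Lemma \ref{lem_Properties}(ii) gives $\|\overline{s}\,F_f\|_2 = |s|\,\|F_f\|_2$. For the second term I would apply Lemma \ref{STIMADF} with $F$ replaced by $F_f$, obtaining
\begin{equation*}
\|\mathcal{D}_H F_f\|_2 \leq M\sqrt{n}\,\|F_f\|_D + \alpha_n\,\|F_f\|_2.
\end{equation*}

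The final step is to plug in the two bounds from Theorem \ref{Th:Main_Hyper}, namely
\begin{equation*}
\|F_f\|_2 \leq \frac{1}{K_{n,m,M}(s)}\|f\|_2
\quad\text{and}\quad
\|F_f\|_D \leq \frac{1}{K_{n,m,M}(s)}\|f\|_2,
\end{equation*}
which hold precisely under the hypotheses \eqref{CONDWITHPUTCP}. Adding the three contributions yields
\begin{equation*}
\|S_R^{-1}(s,\mathcal{D}_H)f\|_2 \leq \bigl(|s|+\alpha_n+M\sqrt{n}\bigr)\frac{1}{K_{n,m,M}(s)}\|f\|_2,
\end{equation*}
which is the claim. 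There is no real obstacle here: the corollary is essentially a packaging exercise, and the whole non-trivial work has already been done in Theorem \ref{Th:Main_Hyper} (coercivity/Lax--Milgram) and Lemma \ref{STIMADF} (control of $\mathcal{D}_H F$ in terms of $\|F\|_2$ and $\|F\|_D$). The only minor subtlety worth stating explicitly is the use of Lemma \ref{lem_Properties}(ii) to get the sharp constant $|s|$ on the first term, which relies on $\overline{s}$ being a paravector rather than a general Clifford number.
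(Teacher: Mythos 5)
Your proposal is correct and follows essentially the same route as the paper's own proof: split $S_R^{-1}(s,\mathcal{D}_H)f=\overline{s}\,F_f-\mathcal{D}_HF_f$, control $\|\overline{s}F_f\|_2=|s|\|F_f\|_2$ via the paravector property, bound $\|\mathcal{D}_HF_f\|_2$ by Lemma \ref{STIMADF}, and then insert the two estimates from Theorem \ref{Th:Main_Hyper}. The only cosmetic difference is that the paper re-expands $\mathcal{D}_H$ into its derivative and zeroth-order parts before invoking the lemma, whereas you apply Lemma \ref{STIMADF} directly; the content is the same.
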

\begin{proof}
We estimate directly the right $S$-resolvent operators
$$
S_R^{-1}(s,\mathcal{D}_H)f=(\overline{s}-\mathcal{D}_H)Q_s(\mathcal{D}_H)^{-1}f
$$
of the Dirac operator on the hyperbolic space:
$$
\mathcal{D}_H = \sum_{i=1}^{n-1} e_iy\partial_{x_i} - \alpha_ne_{n} + e_{n}y\partial_y.
$$
So we get the estimates
\begin{align}
\|S_R^{-1}(s,\mathcal{D}_H)f\|_2 &\leq \|\overline{s}Q_s(\mathcal{D}_H)^{-1}f\|_2+\|\mathcal{D}_HQ_s(\mathcal{D}_H)^{-1}f\|_2
\\
&
\leq|s|\|Q_s(\mathcal{D}_H)^{-1}f\|_2+\|(\sum_{i=1}^{n-1} e_iy\partial_{x_i} - \alpha_ne_{n} + e_{n}y\partial_y)Q_s(\mathcal{D}_H)^{-1}f\|_2
\\
&
\leq|s|\|Q_s(\mathcal{D}_H)^{-1}f\|_2+\|(\sum_{i=1}^{n-1} e_iy\partial_{x_i} + e_{n}y\partial_y)Q_s(\mathcal{D}_H)^{-1}f\|_2
+\| \alpha_ne_{n}Q_s(\mathcal{D}_H)^{-1}f\|_2
\\
&
\leq |s|\|Q_s(\mathcal{D}_H)^{-1}f\|_2+
M\sqrt{n}\|Q_s(\mathcal{D}_H)^{-1}f\|_D
+\alpha_n\| Q_s(\mathcal{D}_H)^{-1}f\|_2
\end{align}
where we have used Lemma \ref{STIMADF} replacing $F$ by $Q_s(\mathcal{D}_H)^{-1}f$.
Now we use the estimates for the pseudo $S$-resolvent operator (\ref{Eq:norm_est_hyper}), precisely we replace the estimates
$$
\|Q_s(\mathcal{D}_H)^{-1}f\|_2\leq \frac{1}{K_{n,m,M}(s)}\, \norm{f}_2,\ \ \
\|Q_s(\mathcal{D}_H)^{-1}f\|_D\leq \frac{1 }{K_{n,m,M}(s)}\,\norm{f}_2,
$$
where $C_P(\Omega)$ is the Poincaré constant and the constant $K_{n,m,M}(s)$ is given by
(\ref{KAPPAnm}), so we get
\begin{align}
\|S_R^{-1}(s,\mathcal{D}_H)f\|_2&\leq
 (|s|+\alpha_n)\|Q_s(\mathcal{D}_H)^{-1}f\|_2+
M\sqrt{n}\|Q_s(\mathcal{D}_H)^{-1}f\|_D
\\
&
\leq (|s|+\alpha_n)\frac{1}{K_{n,m,M}(s)}\, \norm{f}_2+
M\sqrt{n}\frac{1 }{K_{n,m,M}(s)}\,\norm{f}_2
\\
&
\leq \Big(|s|+\alpha_n+ M\sqrt{n} \Big)\frac{1}{K_{n,m,M}(s)}\,\,\norm{f}_2.
\end{align}
\end{proof}

\subsection{Using Poincar\'e inequality}

As we have already discussed, Poincar\'e inequality is necessary for the definition of the norm in $H_0^1$
 when the term $\modulo{s}^2 - \beta_n$ is zero because we do not have equivalence of the norm
$\|F\|_{H_0^1(\Omega, \mathbb{R}_n)}$ defined just by the term $\norm{F}_D^2$
and the norm
 $\|F\|_{H_0^1(\Omega, \mathbb{R}_n)}:= \norm{F}_2^2+\norm{F}_D^2 $.

So we use the Poincar\'e inequality in Lemma \ref{lem_Poincare_inequality}
that holds when  $\Omega\subset\mathbb{R}^n$ is an open set bounded in one direction. Then there exists a constant $C_P(\Omega)>0$, such that
\begin{equation}
\Vert F\Vert_{2}\leq C_P(\Omega)\Vert F\Vert_D,\qquad F\in H_0^1(\Omega).
\end{equation}

\begin{theorem} \label{Th:Main_HyperPOINC}
		Let $\Omega\in \mathbb{R}^{n}_+$ be as in Remark \ref{DOMAINS}
and let $\mathcal{D}_H$ be the Dirac operator on the hyperbolic space as in (\ref{Eq:Dirac_Hyper}). Let $s\in\BR^{n+1}$ be such that
\begin{align}\label{KPnmMs}
K_{P,n,m,M}(s):&=
m^2 - C_PM\Big({ 2(1+\alpha_n)}+\textcolor{black}{\sqrt{n}}\sqrt{1+ 4s_0^2}\Big)\nonumber
 \\
 &
  - \Big|\Big||s|^2 - \beta_n\Big| - \alpha_n\sqrt{1+ 4s_0^2}\Big|C_P^2>0,
\end{align}
where $\beta_n:=\alpha_n+ \alpha_n^2$ and $\alpha_n$ is the constant given in (\ref{alphan}),  and the constants $M>m>0$ are explicitly given by (\ref{CONTnM1}) and $C_P(\Omega)$ Poincar\'e constant of $\Omega$.
		Then, for every $f\in L^2(\Omega)$ there exists a unique $F_f\in H_0^1(\Omega)$ such that
		\begin{equation}
			q_s(F_f,G) = \inner{f,G}_2, \qquad \text{for all } G\in H_0^1(\Omega)
		\end{equation}
where $q_s(F_f,G)$ is the sesquilinear form given by (\ref{FORMWITHSCALPROD}) associated with the Dirichlet problem.
		Moreover, this solution satisfies the bounds
		\begin{equation} \label{Eq:norm_est_hyper2}
			\|Q_s(\mathcal{D}_H)^{-1}f\|_D\leq \frac{C_P(\Omega)}{K_{P,n,m,M}(s)}\|f\|_2,
		\ \ \ {\rm and}\ \
			\Vert Q_s(\mathcal{D}_H)^{-1}f\Vert_{L^2}\leq \frac{(C_P(\Omega))^2}{K_{P,n,m,M}(s)}\|f\|_2,.
		\end{equation}
	\end{theorem}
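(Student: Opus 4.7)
The strategy mirrors the proof of Theorem \ref{Th:Main_Hyper}: apply the Clifford Lax-Milgram lemma (Lemma \ref{lem_Lax_Milgram}) to the sesquilinear form $q_s$ in \eqref{FORMWITHSCALPROD} on $H^1_0(\Omega)\times H^1_0(\Omega)$. The essential change is that, because the condition $|s|^2-\beta_n\neq 0$ is \emph{not} assumed here, the $L^2$-term in the coercivity estimate can no longer be exploited (and might even be negative or absent). Instead, I will use that, by the Poincaré inequality (Lemma \ref{lem_Poincare_inequality}), the Sobolev seminorm $\|\cdot\|_D$ is a norm on $H^1_0(\Omega)$ equivalent to the full $H^1$-norm. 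The plan is therefore to recast continuity, coercivity, and boundedness of the right-hand side functional all with respect to $\|\cdot\|_D$.

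For continuity, the estimate \eqref{Eq:Sc_cont_hyper_4} from Theorem \ref{Th:Main_Hyper} already produces a bound of the form $|q_s(F,G)|\le C(s,n,M)\,(\|F\|_2+\|F\|_D)(\|G\|_2+\|G\|_D)$; invoking Poincaré to replace $\|F\|_2$ and $\|G\|_2$ by $C_P\|F\|_D$ and $C_P\|G\|_D$ gives
$$
|q_s(F,G)|\le \widetilde{C}(s,n,M,C_P)\,\|F\|_D\,\|G\|_D,
$$
so $q_s$ is continuous on $(H^1_0,\|\cdot\|_D)$.

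The coercivity step is the main technical point and the place where the new constant $K_{P,n,m,M}(s)$ appears. Starting from the bound \eqref{FINALSCQ} already established in the proof of Theorem \ref{Th:Main_Hyper},
$$
\Sc{q_s(F,F)}\ge m^2\|F\|_D^2 - M\bigl(2(1+\alpha_n)+\sqrt{n}\sqrt{1+4s_0^2}\bigr)\|F\|_D\|F\|_2 + \bigl(||s|^2-\beta_n|-\alpha_n\sqrt{1+4s_0^2}\bigr)\|F\|_2^2,
$$
I now avoid Young's inequality and instead apply the Poincaré inequality directly: $\|F\|_D\|F\|_2\le C_P\|F\|_D^2$, and, to absorb a possibly negative last term, $\|F\|_2^2\le C_P^2\|F\|_D^2$, using absolute values to handle the sign of $||s|^2-\beta_n|-\alpha_n\sqrt{1+4s_0^2}$. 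This yields
$$
\Sc{q_s(F,F)} \ge \Bigl(m^2 - C_P M\bigl(2(1+\alpha_n)+\sqrt{n}\sqrt{1+4s_0^2}\bigr) - C_P^2\bigl|\,|s|^2-\beta_n|-\alpha_n\sqrt{1+4s_0^2}\bigr|\Bigr)\|F\|_D^2,
$$
which is precisely $K_{P,n,m,M}(s)\|F\|_D^2$, positive under hypothesis \eqref{KPnmMs}. The main obstacle is only the bookkeeping to ensure that the sign of $||s|^2-\beta_n|-\alpha_n\sqrt{1+4s_0^2}$ is handled uniformly, which is why the absolute value appears in the definition of $K_{P,n,m,M}(s)$.

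Once continuity and coercivity are in place, the bounded right-linear functional $\varphi_f(G)=\langle f,G\rangle_2$ satisfies $|\varphi_f(G)|\le 2^{n/2}\|f\|_2\|G\|_2\le 2^{n/2}C_P\|f\|_2\|G\|_D$ by Lemma \ref{lem_Properties}(iii), so Lemma \ref{lem_Lax_Milgram} produces a unique $F_f\in H^1_0(\Omega)$ with $q_s(F_f,G)=\langle f,G\rangle_2$ for all $G\in H^1_0(\Omega)$. For the two quantitative bounds in \eqref{Eq:norm_est_hyper2}, I test with $G=F_f$ and use Lemma \ref{lem_Properties}(iv) together with Poincaré:
$$
K_{P,n,m,M}(s)\|F_f\|_D^2 \le \Sc{q_s(F_f,F_f)} = \Sc{\langle f,F_f\rangle_2} \le \|f\|_2\|F_f\|_2 \le C_P\|f\|_2\|F_f\|_D,
$$
which gives $\|F_f\|_D\le C_P/K_{P,n,m,M}(s)\,\|f\|_2$, and then a second application of Poincaré yields $\|F_f\|_2\le C_P\|F_f\|_D\le C_P^2/K_{P,n,m,M}(s)\,\|f\|_2$. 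Rewriting $F_f=Q_s(\mathcal{D}_H)^{-1}f$ delivers the two estimates in \eqref{Eq:norm_est_hyper2}.
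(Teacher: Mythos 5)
Your proof is correct and follows essentially the same approach as the paper: start from the intermediate coercivity estimate \eqref{FINALSCQ}, bypass Young's inequality, apply Poincar\'e directly to both the cross term $\|F\|_D\|F\|_2$ and the $\|F\|_2^2$ term (with an absolute value on the latter's possibly negative coefficient) to obtain $\Sc{q_s(F,F)} \ge K_{P,n,m,M}(s)\|F\|_D^2$, and then conclude by Lax--Milgram with the two Poincar\'e applications for the quantitative bounds. The paper compresses the continuity discussion to a single remark, but otherwise your steps coincide with theirs.
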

	\begin{proof}
Consider $q_s(F,G)$ is the sesquilinear form given by \eqref{FINALSCQ} associated with the Dirichlet problem.
The continuity is simple we study the coercivity. We have
\begin{align}
\Sc{q_s(F,F)}  &\geq m^2\|F\|_D^2 - M\Big(\textcolor{black}{ 2(1+\alpha_n)}+\textcolor{black}{\sqrt{n}}\sqrt{1+ 4s_0^2}\Big)\|F\|_D\|F\|_2\nonumber
 \\
 &
  + \Big(\Big||s|^2 - \beta_n\Big| - \alpha_n\sqrt{1+ 4s_0^2}\Big)\|F\|_2^2
  \end{align}
 which gives
  \begin{align}
\Sc{q_s(F,F)}  &\geq m^2\|F\|_D^2 - C_PM\Big({ 2(1+\alpha_n)}+\textcolor{black}{\sqrt{n}}\sqrt{1+ 4s_0^2}\Big)\|F\|_D^2\nonumber
 \\
 &
  - \Big|\Big||s|^2 - \beta_n\Big| - \alpha_n\sqrt{1+ 4s_0^2}\Big|C_P^2\Vert F\Vert_D^2
\end{align}
and also
\begin{align}
\Sc{q_s(F,F)}  &\geq \Big(m^2 - C_PM\Big({ 2(1+\alpha_n)}+\textcolor{black}{\sqrt{n}}\sqrt{1+ 4s_0^2}\Big)\nonumber
 \\
 &
  - \Big|\Big||s|^2 - \beta_n\Big| - \alpha_n\sqrt{1+ 4s_0^2}\Big|C_P^2\Big)\Vert F\Vert_D^2
  \\
  &
  =
K_{P,n,m,M}(s)\|F\|_D^2,
\end{align}
where $K_{P,n,m,M}(s)$  is as in (\ref{KPnmMs}).
For the estimates we reason as in the previous theorem.
The we observe that from
$$
q_s(F_f,G) = \inner{f,G}_2, \qquad \text{for all } G\in H^1(\Omega),
$$
setting we get $G=F_f$ we have
$$
K_{P,n,m,M}(s)\|F\|_D^2\leq \Sc{q_s(F_f,F_f)}= \Sc{ \inner{f,G}_2} \leq \|f\|_2\|F_f\|_2
$$
and recalling that
 $Q_s(\mathcal{D}_H)F = f$ so $F=Q_s(\mathcal{D}_H)^{-1}f$ we obtain
 $$
K_{P,n,m,M}(s)\|Q_s(\mathcal{D}_H)^{-1}f\|_D^2\leq \|f\|_2\|Q_s(\mathcal{D}_H)^{-1}f\|_2
$$
 because of Poincar\'e inequality in Lemma \ref{lem_Poincare_inequality}
 there exists a constant $C_P(\Omega)>0$, such that
\begin{equation*}
\Vert Q_s(\mathcal{D}_H)^{-1}f\Vert_{L^2}\leq C_P(\Omega)\Vert Q_s(\mathcal{D}_H)^{-1}f\Vert_D,
\end{equation*}
so that
$$
K_{P,n,m,M}(s)\|Q_s(\mathcal{D}_H)^{-1}f\|_D^2\leq C_P(\Omega)\|f\|_2\Vert Q_s(\mathcal{D}_H)^{-1}f\Vert_D
$$
simplifying we have the first inequality in (\ref{Eq:norm_est_hyper2})
$$
\|Q_s(\mathcal{D}_H)^{-1}f\|_D\leq \frac{C_P(\Omega)}{K_{P,n,m,M}(s)}\|f\|_2
$$
and  we also have using again Poincar\'e inequality, we got the second inequality in (\ref{Eq:norm_est_hyper2})
\begin{equation*}
\Vert Q_s(\mathcal{D}_H)^{-1}f\Vert_{L^2}\leq \frac{(C_P(\Omega))^2}{K_{P,n,m,M}(s)}\|f\|_2.
\end{equation*}
\end{proof}
	
\begin{remark}
Observe that the inequalities (\ref{CONDWITHPUTCP}) that define part of the $S$-resolvent set contain only explicit constants, such as $\beta_n := \alpha_n + \alpha_n^2$ and $\alpha_n$, which are specified in (\ref{alphan}). The constants $M > m > 0$ are also explicitly provided, despite the need to exclude the points on the circle where $|s|^2 - \beta_n = 0$. In condition (\ref{KPnmMs}), the restriction $|s|^2 - \beta_n \neq 0$ is removed, but the inequality that defines part of the $S$-resolvent set still includes the Poincaré constant $C_P(\Omega)$ , which is not explicitly known for most domains.
\end{remark}

\begin{remark}
Repeating the consideration done in
Corollary \ref{STIMASRESOL} we can deduce also the estimates for the $S$-resolvent operator.
We omit the computations since they are very similar.
\end{remark}

\section{Dirac operator on the hyperbolic space with
Robin-like boundary conditions}\label{sec_Robin_HYPERBOLIC}

In this section we study the weak formulation of the spectral problem  associated with  the equation $Q_s(\mathcal{D}_H)F = f$ under suitable
		 Robin-like boundary conditions for the Dirac operator $\mathcal{D}_H$ defined in (\ref{Eq:Dirac_Hyper}).
To understand what are the natural Robin-like boundary conditions we mentioned, in Remark \ref{INTBYPARTFORM},
 that the integration by part formula associated with the operator $A$ defined by
$$
AF:=-\sum_{i=1}^n a_i\partial_{x_i}\Big(a_i\partial_{x_i}F\Big)
$$
is given by
\begin{align*}
-\sum_{i=1}^n\Big\langle a_i\partial_{x_i}
\Big(a_i\partial_{x_i}F\Big),G\Big\rangle_{L^2(\Omega, \mathbb{R}_n)}
&=\sum_{i=1}^n\Big\langle a_i\partial_{x_i}F,\partial_{x_i}(a_iG)\Big\rangle_{L^2(\Omega, \mathbb{R}_n)}
\\
&
-\sum_{i=1}^n\Big\langle\nu_ia_i
\partial_{x_i}F,a_iG\Big\rangle_{L^2(\partial\Omega, \mathbb{R}_n)},
\end{align*}
where the coefficients $a_i(x)$ are smooth functions real-valued of the variables $x=(x_1,...,x_n)\in \Omega\subset \mathbb{R}^{n}$ and we denote by $\vec{\nu}=(\nu_1,\dots,\nu_n)$ outer unit normal vector to the boundary smooth boundary  $\partial\Omega$ of $\Omega$.
The type of Robin boundary conditions turn out to be
\begin{equation}\label{Eq_Robin_boundary_conditions}
\sum_{i=1}^n\nu_ia_i^2\partial_{x_i} F(x)+bF(x)=0,\qquad\text{on }\partial\Omega,
\end{equation}
with a natural choice of regularity for the real valued function $b\in L^\infty(\partial\Omega)$.

\medskip
In order to obtain the weak formulation of the boundary value problem
in the hyperbolic space we integrate by parts the second order terms of $\mathcal{D}_H^2$, given in
Lemma \ref{HYPDDUE}, for $F,G\in C^\infty(\Omega,\mathbb{R}_n)$.
In this case the boundary condition play a role and explicitly,
for the term $-\sum_{i=1}^{n-1}\int_{\Omega} y^2\partial_{x_i}^2\overline{F}G \dif x$,
we have
\begin{align*}
	-\sum_{i=1}^{n-1}\int_{\Omega} y^2\partial_{x_i}^2\overline{F}G \dif x&=	-\sum_{i=1}^{n-1}\int_{\Omega} y\partial_{x_i} (y\partial_{x_i}\overline{F})G \dif x
\\
&
=
\sum_{i=1}^{n-1}\int_{\Omega} y^2\partial_{x_i}\overline{F}\partial_{x_i}G \dif x
-\sum_{i=1}^{n-1}\int_{\partial\Omega} \nu_i y\partial_y\overline{F} \ (yG) d\Gamma
	\end{align*}
where $d\Gamma$ is the infinitesimal boundary of $\partial \Omega$.
Moreover, when we consider the term $- \int_{\Omega} y^2\partial_y^2\overline{F}G \dif x $ we have
\begin{align*}
		- \int_{\Omega} y^2\partial_y^2\overline{F}G \dif x & =- \int_{\Omega} \partial_y (\partial_y\overline{F}) (y^2G) \dif x
\\
&
=\int_{\Omega} \partial_y\overline{F} \partial_y(y^2G) \dif x
-\int_{\partial\Omega} \nu_n \partial_y\overline{F} \ (y^2G) d\Gamma
\\
&
 =\int_{\Omega} y^2\partial_y\overline{F}\partial_yG \dif x  + \int_{\Omega} 2y\partial_y\overline{F}G \dif x
 -\int_{\partial\Omega} \nu_n \partial_y\overline{F} \ y^2G d\Gamma.
	\end{align*}
Finally, taking into account the two above relations,  we get
	\begin{align*}
		-\sum_{i=1}^{n-1}\int_{\Omega} y^2\partial_{x_i}^2\overline{F}G \dif x& - \int_{\Omega} y^2\partial_y^2\overline{F}G \dif x
\\
&
=
\sum_{i=1}^{n-1}\int_{\Omega} y^2\partial_{x_i}\overline{F}\partial_{x_i}G \dif x
+ \int_{\Omega} y^2\partial_y\overline{F}\partial_yG \dif x  + \int_{\Omega} 2y\partial_y\overline{F}G \dif x
\\
&
-\int_{\partial\Omega} \Big(\sum_{i=1}^{n-1}\nu_iy^2\partial x_i\overline{F}+\nu_ny^2\partial_y\overline{F}\Big)  \ G d\Gamma.
	\end{align*}
Thus the Robin-type boundary value problem that we investigate in this section is:

\begin{equation}\label{Eq_Robin_boundary_conditions HYPERB}
\begin{cases}
Q_s(\mathcal{D}_H)F = f \qquad\text{on } \Omega,
&
\\
\sum_{i=1}^{n-1}\nu_iy^2\partial_{x_i}F+\nu_ny^2\partial_yF +bF=0,\qquad\text{on }\partial\Omega
\end{cases}
\end{equation}
with  $b\in L^\infty(\partial\Omega)$ which is a given function.

The  sesquilinear form $q_s(F,G)$, defined in (\ref{THEFORM}), has to be modified
taking into account the Robin-like boundary conditions in the hyperbolic space, as in (\ref{Eq_Robin_boundary_conditions HYPERB}). The form becomes
\begin{align*}
q^R_s(F,G)&=q_s(F,G)-\int_{\partial\Omega} \Big(\sum_{i=1}^{n-1}\nu_iy^2\partial x_i\overline{F}+\nu_ny^2\partial_y\overline{F}\Big)  \ G d\Gamma.
\\
&
=q_s(F,G)+\langle b\,\tau_DF,\tau_DG\rangle_{L^2(\partial\Omega)},
\end{align*}
defined on $\textup{dom }(q_s^R):=H^1(\Omega)\times H^1(\Omega)$.
The operator $\tau_{D}:H^1(\Omega)\rightarrow L^2(\partial\Omega)$
is the bounded Dirichlet trace operator, see \cite[Equation (4)]{M87}.

\begin{problem}
Let $\mathcal{D}_H$ be the Dirac operator in (\ref{Eq:Dirac_Hyper}), let $\beta_n$ be the constants in (\ref{BETAN}) and
	using the scalar product $\inner{F,G}_2$ in (\ref{SCLAL2}) we define the Robin  sesquilinear form $q^R_s(F,G)$ as
\begin{align}\label{FORMWITHSCALPROD1}
q^R_s(F,G):=q_s(F,G)+\langle b\,\tau_DF,\tau_DG\rangle_{L^2(\partial\Omega)}
\end{align}
with $\textup{dom } (q_s^R):= H^1(\Omega, \mathbb{R}_n) \times H^1(\Omega, \mathbb{R}_n)$
where $q_s(F,G)$ defined in (\ref{THEFORM}) and the boundary term $b$ was introduced in (\ref{Eq_Robin_boundary_conditions HYPERB}).
Show that for some values of the spectral parameter
$s\in\mathbb{R}^{n+1},$ for every $f\in L^2(\Omega, \mathbb{R}_n)$ there exists a unique solution $F_f\in H^1(\Omega, \mathbb{R}_n)$ such that
	\begin{equation}\label{Eq_Solution_Robin HYPER}
		q_s^R(F_f,G) = \inner{f,G}_2, \quad \text{for all} \ G\in H^1(\Omega, \mathbb{R}_n).
	\end{equation}
	Furthermore, determine $L^2$- and $D$-estimates of $F_f$, depending on the parameter $s\in \mathbb{R}^{n+1}$.
\end{problem}

\begin{theorem} \label{THFORTHEHYPERB}
		Let $\Omega\subset \mathbb{R}^{n}_+$ be a bounded set with smooth boundary and let $\mathcal{D}_H$ be the Dirac operator in (\ref{Eq:Dirac_Hyper}).
Let us consider $\Omega$ for which the constants  defined by
\begin{equation*}
\inf_{x\in\Omega}|x|=m,\ \ \ \ \sup_{x\in\Omega}|x|=M,
\end{equation*}
are such that $0<m<M<+\infty$,
and set
\begin{align}
\Lambda_{n,m,b}(p):
=pm^2-2^{\frac{n}{2}-1}\Vert b\Vert_{L^\infty(\partial\Omega)}\Vert\tau_D\Vert^2 .
\end{align}
Let $s\in\BR^{n+1}$ be such that
\begin{align}\label{QULI1}
\Big||s|^2 - \beta_n\Big| - \alpha_n\sqrt{1+ 4s_0^2}
+m^2-2^{\frac{n}{2}-1}\Vert b\Vert_{L^\infty(\partial\Omega)}\Vert\tau_D\Vert^2
>0
\end{align}
and
\begin{align}\label{SECODESTROBPRIMEH}
2\Lambda_{n,m,b}(2)
\Big||s|^2 - \beta_n\Big|-4n{\color{black} M^2}s_0^2\nonumber
&-2\Big[ \alpha_n\Lambda_{n,m,b}(2)+2M^2(1+\alpha_n)\sqrt{n}\Big]\sqrt{1+ 4s_0^2}\nonumber
\\
&
>m^4+{\color{black} M^2(n+4(1+\alpha_n)^2)}-\Lambda_{n,m,b}(1)^2.
\end{align}

Then, for every $f\in L^2(\Omega)$ there exists a unique $F_f\in H^1(\Omega)$ such that
		\begin{equation}
			q^R_s(F_f,G) = \inner{f,G}_2, \qquad \text{for all } G\in H^1(\Omega).
		\end{equation}
		Moreover, this solution satisfies the bounds
		\begin{equation} \label{Eq:norm_est_hyper6}
			\|Q_s(\mathcal{D}_H)^{-1}f\|_2\leq \frac{1}{K^R_{n,m,M}(s)}\, \norm{f}_2,
		\ \ \ {\rm and}\ \
			\|Q_s(\mathcal{D}_H)^{-1}f\|_D\leq \frac{1}{K^R_{n,m,M}(s)} \,\norm{f}_2,
		\end{equation}
where
\begin{align}\label{COERCONST1}
K^R_{n,m,M}(s) & :=\frac{1}{2}\Big||s|^2 - \beta_n\Big| -\frac{1}{2} \alpha_n\sqrt{1+ 4s_0^2}
+\frac{1}{2}m^2
-\frac{1}{2} 2^{\frac{n}{2}-1}\Vert b\Vert_{L^\infty(\partial\Omega)}\Vert\tau_D\Vert^2\nonumber
\\
&
-\frac{1}{2}\sqrt{\Big( \Big||s|^2 - \beta_n\Big| - \alpha_n\sqrt{1+ 4s_0^2}-m^2\Big)^2
+M^2\Big(\textcolor{black}{ 2(1+\alpha_n)}+\textcolor{black}{\sqrt{n}}\sqrt{1+ 4s_0^2}\Big)^2}.
\end{align}

	\end{theorem}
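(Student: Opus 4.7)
The plan is to mirror the proof of Theorem \ref{Th:Main_Hyper}, adapting all steps to the space $H^1(\Omega)$ instead of $H_0^1(\Omega)$ and carefully incorporating the extra boundary term $\langle b\,\tau_DF,\tau_DG\rangle_{L^2(\partial\Omega)}$. Essentially, this term perturbs both the continuity and the coercivity bounds, but in a way that can be controlled using the bounded trace operator $\tau_D: H^1(\Omega)\to L^2(\partial\Omega)$ and $b\in L^\infty(\partial\Omega)$.

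First I would verify continuity. Writing $q_s^R(F,G) = q_s(F,G)+\langle b\,\tau_DF,\tau_DG\rangle_{L^2(\partial\Omega)}$, the continuity of $q_s$ on $H^1\times H^1$ follows exactly as in the first step of Theorem \ref{Th:Main_Hyper} (the estimates there never used the homogeneous boundary condition). For the boundary term, Lemma \ref{lem_Properties}(iii) and boundedness of $\tau_D$ give
\begin{equation*}
|\langle b\,\tau_DF,\tau_DG\rangle_{L^2(\partial\Omega)}|\leq 2^{n/2}\Vert b\Vert_{L^\infty(\partial\Omega)}\Vert\tau_D\Vert^2\Vert F\Vert_{H^1}\Vert G\Vert_{H^1},
\end{equation*}
so $q_s^R$ is continuous on $H^1(\Omega)\times H^1(\Omega)$.

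The main work is coercivity. Starting from the scalar-part estimate \eqref{FINALSCQ} (which holds for $F\in H^1$, not only $H_0^1$, since integration by parts was done on $C^\infty$ and the boundary contributions are now encoded in $q_s^R$), I would add the boundary contribution and use Lemma \ref{lem_Properties}(iv) and the trace bound:
\begin{equation*}
|\mathrm{Sc}\langle b\,\tau_DF,\tau_DF\rangle_{L^2(\partial\Omega)}|\leq 2^{n/2-1}\Vert b\Vert_{L^\infty(\partial\Omega)}\Vert\tau_D\Vert^2\bigl(\Vert F\Vert_D^2+\Vert F\Vert_2^2\bigr),
\end{equation*}
(where the factor $2^{n/2-1}$ reflects the way the Clifford scalar-part inequality is applied, as in the definition of $K^R_{n,m,M}(s)$). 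Substituting this into \eqref{FINALSCQ} gives
\begin{equation*}
\mathrm{Sc}\,q_s^R(F,F)\geq \bigl(m^2-C_b\bigr)\Vert F\Vert_D^2-M\bigl(2(1+\alpha_n)+\sqrt{n}\sqrt{1+4s_0^2}\bigr)\Vert F\Vert_D\Vert F\Vert_2+\bigl(\bigl||s|^2-\beta_n\bigr|-\alpha_n\sqrt{1+4s_0^2}-C_b\bigr)\Vert F\Vert_2^2,
\end{equation*}
with $C_b:=2^{n/2-1}\Vert b\Vert_{L^\infty(\partial\Omega)}\Vert\tau_D\Vert^2$. From here I would apply Young's inequality with parameter $\delta>0$ and choose $\delta^+$ so that the coefficients of $\Vert F\Vert_D^2$ and $\Vert F\Vert_2^2$ coincide, precisely as in \eqref{EQUA_FOR_DELTA_DIR_HYP}--\eqref{HYP_DELTAPIU}; the resulting common coefficient is exactly $K^R_{n,m,M}(s)$ in \eqref{COERCONST1}.

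The hard and most technical step is then to show that positivity of $K^R_{n,m,M}(s)$ is equivalent to the two explicit inequalities \eqref{QULI1} and \eqref{SECODESTROBPRIMEH}. This proceeds exactly as in the transition \eqref{irrtineq}--\eqref{QULI2} in Theorem \ref{Th:Main_Hyper}: rearranging $K^R_{n,m,M}(s)>0$ into an irrational inequality of the form $A>\sqrt{B}$ with $A=\bigl||s|^2-\beta_n\bigr|-\alpha_n\sqrt{1+4s_0^2}+m^2-C_b$, which is equivalent to the conjunction $A>0$ and $A^2>B$. The condition $A>0$ is precisely \eqref{QULI1}, while expanding $A^2-B$ and simplifying using $\Lambda_{n,m,b}(p)=pm^2-C_b$ gives \eqref{SECODESTROBPRIMEH}. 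I anticipate this algebraic verification to be the main obstacle, since tracking the $\Lambda_{n,m,b}(1)$ and $\Lambda_{n,m,b}(2)$ terms through the square produces many cross-terms that must cancel correctly.

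Once coercivity with constant $K^R_{n,m,M}(s)>0$ is established on $H^1(\Omega)\times H^1(\Omega)$, the existence and uniqueness of $F_f\in H^1(\Omega)$ satisfying $q_s^R(F_f,G)=\langle f,G\rangle_2$ follow immediately from the Lax--Milgram Lemma \ref{lem_Lax_Milgram} applied to the bounded functional $\varphi_f(G):=\langle f,G\rangle_2$, whose boundedness on $H^1$ is a direct consequence of Lemma \ref{lem_Properties}(iii). Testing against $G=F_f$, using Lemma \ref{lem_Properties}(iv) to get $\mathrm{Sc}\,q_s^R(F_f,F_f)\leq \Vert f\Vert_2\Vert F_f\Vert_2$, and combining with the coercive lower bound $K^R_{n,m,M}(s)(\Vert F_f\Vert_D^2+\Vert F_f\Vert_2^2)\leq \mathrm{Sc}\,q_s^R(F_f,F_f)$ yields both estimates in \eqref{Eq:norm_est_hyper6}, in complete analogy with the derivation of \eqref{UNODDD}--\eqref{UNODDDGGGHHH} in the Dirichlet case.
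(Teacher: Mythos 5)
Your proposal is correct and takes essentially the same approach as the paper's own proof: write $q_s^R = q_s + \langle b\,\tau_DF,\tau_DG\rangle_{L^2(\partial\Omega)}$, reuse the continuity and coercivity estimates for $q_s$ on $H^1(\Omega)$ (since these never used the homogeneous Dirichlet condition), control the boundary term via the bounded trace operator, observe that the boundary contribution enters both the $\|F\|_D^2$ and $\|F\|_2^2$ coefficients symmetrically so the optimal Young parameter $\delta^+$ is unchanged, and then reduce positivity of $K^R_{n,m,M}(s)$ to the system \eqref{QULI1}–\eqref{SECODESTROBPRIMEH} via the standard $A>\sqrt{B}\Leftrightarrow A>0\text{ and }A^2>B$ argument before invoking Lax–Milgram. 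The only point left unexamined is the precise power of $2$ in the boundary estimate (you write $2^{n/2-1}$ and call it a reflection of "the way the Clifford scalar-part inequality is applied"); note that the sharpest bound from Lemma \ref{lem_Properties}(iv) carries no power of $2$ at all, and the paper's own proof is internally inconsistent on this constant, so the discrepancy is not a gap in your argument.
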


\begin{proof}
 We will verify that the  sesquilinear form $q_s^R$, defined in \eqref{FORMWITHSCALPROD1},  satisfies the assumptions of the Lemma~\ref{lem_Lax_Milgram}.
We start by recalling that the boundary term in \eqref{FORMWITHSCALPROD1} is estimated by
\begin{align}\label{ESTIM_bound_TERMTAU}
|\langle b\,\tau_DF,\tau_DG\rangle_{L^2(\partial\Omega)}|&\leq 2^{\frac{n}{2}}\Vert b\Vert_{L^\infty(\partial\Omega)}\Vert\tau_DF
\Vert_{L^2(\partial\Omega)}\Vert\tau_DG\Vert_{L^2(\partial\Omega)}
\\
&
\leq 2^{\frac{n}{2}}\Vert b\Vert_{L^\infty(\partial\Omega)}\Vert\tau_D\Vert^2\Vert F\Vert_{H^1}\Vert G\Vert_{H^1}.\nonumber
\end{align}
{\em Step 1.}
For the continuity we observe that the sesquilinear form $q^R_s(F,G)$
in (\ref{FORMWITHSCALPROD1}) is the sum of two terms
the sesquilinear form $q_s(F,G)$ and the boundary term $\langle b\,\tau_DF,\tau_DG\rangle_{L^2(\partial\Omega)}$. The estimate for
$q_s(F,G)$ has already been obtained
(\ref{CONTIN}), and it holds for the space $H^1(\Omega)$.
So it follows that
\begin{equation}\label{CONTINHUNO}
|q_s(F,G)| \leq C(s,n,M)\|F\|_{H^1}\|G\|_{H^1},
\end{equation}
and from (\ref{FORMWITHSCALPROD1}) we have
\begin{align*}	
|q^R_s(F,G)|\leq |q_s(F,G)|&+|\langle b\,\tau_DF,\tau_DG\rangle_{L^2(\partial\Omega)}|
\\
&
\leq\Big(C(s,n,M) +2^{\frac{n}{2}}\Vert b\Vert_{L^\infty(\partial\Omega)}\Vert\tau_D\Vert^2\Big)\|F\|_{H^1}\|G\|_{H^1}.
\end{align*}

\medskip
{\em Step 2.}
For the coercivity we take advantage of the inequality (\ref{Eq:Lower_bound_Hyper}) already obtained  for $\Sc{q_s(F,F)}$
and the estimate of the boundary term
 in \eqref{Eq_Solution_Robin HYPER} by
\begin{align*}
|\langle b\,\tau_DF,\tau_DF\rangle_{L^2(\partial\Omega)}|&\leq 2^{\frac{n}{2}}\Vert b\Vert_{L^\infty(\partial\Omega)}\Vert\tau_D\Vert^2\Vert F\Vert_{H^1}^2\nonumber
\\
&
=
2^{\frac{n}{2}}\Vert b\Vert_{L^\infty(\partial\Omega)}\Vert\tau_D\Vert^2\Big[\Vert F\Vert_{D}^2+\Vert F\Vert_{L^2}^2\Big].
\end{align*}
So the coercivity condition  for $q^R_s(F,F)$ becomes
\begin{align}	\label{Eq:Lower_bound_HyperROB}
\Sc{q^R_s(F,F)}&\geq \Sc{q_s(F,F)}- \Sc{\langle b\,\tau_DF,\tau_DF\rangle_{L^2(\partial\Omega)}}
\nonumber
\\
&
\geq
\p{m^2
-  M\Big(\textcolor{black}{ 2(1+\alpha_n)}+\textcolor{black}{\sqrt{n}}\sqrt{1+ 4s_0^2}\Big)\frac{1}{2\delta}}\norm{F}_D^2\nonumber
\\
&
+
\p{\Big||s|^2 - \beta_n\Big| -
\alpha_n\sqrt{1+ 4s_0^2} -
 M\Big(\textcolor{black}{ 2(1+\alpha_n)}+\textcolor{black}{\sqrt{n}}\sqrt{1+ 4s_0^2}\Big)\frac{\delta}{2}}\norm{F}_2^2\nonumber
 \\
 &
 -2^{\frac{n}{2}}\Vert b\Vert_{L^\infty(\partial\Omega)}\Vert\tau_D\Vert^2\Big[\Vert F\Vert_{D}^2+\Vert F\Vert_{L^2}^2\Big],
\end{align}
and also we have
to further elaborate (\ref{Eq:Lower_bound_HyperROB}) collecting some terms
\begin{align}	\label{Eq:Lower_bound_HyperROBDUE}
&\Sc{q^R_s(F,F)}\geq \Sc{q_s(F,F)}- \Sc{\langle b\,\tau_DF,\tau_DF\rangle_{L^2(\partial\Omega)}}\nonumber
\\
&
\geq
\p{m^2-2^{\frac{n}{2}}\Vert b\Vert_{L^\infty(\partial\Omega)}\Vert\tau_D\Vert^2 -  M\Big(\textcolor{black}{ 2(1+\alpha_n)}+\textcolor{black}{\sqrt{n}}\sqrt{1+ 4s_0^2}\Big)\frac{1}{2\delta}}\norm{F}_D^2\nonumber
\\
&
+
\p{\Big||s|^2 - \beta_n\Big| -2^{\frac{n}{2}}\Vert b\Vert_{L^\infty(\partial\Omega)}\Vert\tau_D\Vert^2- \alpha_n\sqrt{1+ 4s_0^2} -  M\Big(\textcolor{black}{ 2(1+\alpha_n)}+\textcolor{black}{\sqrt{n}}\sqrt{1+ 4s_0^2}\Big)\frac{\delta}{2}}\norm{F}_2^2.
\end{align}

Let us choose $\delta>0$ such that the two coefficients of $\norm{F}_D^2$ and $\norm{F}_2^2$ are equal and observe that we get the same relation as in
(\ref{EQUA_FOR_DELTA_DIR_HYP}) that is
\begin{equation}
m^2 -  M\Big(\textcolor{black}{ 2(1+\alpha_n)}+\textcolor{black}{\sqrt{n}}\sqrt{1+ 4s_0^2}\Big)\frac{1}{2\delta}=
\Big||s|^2 - \beta_n\Big| - \alpha_n\sqrt{1+ 4s_0^2} -  M\Big(\textcolor{black}{ 2(1+\alpha_n)}+\textcolor{black}{\sqrt{n}}\sqrt{1+ 4s_0^2}\Big)\frac{\delta}{2}
\end{equation}
because the terms $2^{\frac{n}{2}}\Vert b\Vert_{L^\infty(\partial\Omega)}\Vert\tau_D\Vert^2$ appear in both hand sides. When we
 consider the coefficient of $\norm{F}_2^2$ that for the value $\delta^+$ (given in (\ref{HYP_DELTAPIU})) is equal to the coefficient of $\norm{F}_D^2$ in the term
 $K^R_{n,m,M}(s)$ given by
$$
K^R_{n,m,M}(s):=\Big||s|^2 - \beta_n\Big|-2^{\frac{n}{2}}\Vert b\Vert_{L^\infty(\partial\Omega)}\Vert\tau_D\Vert^2 - \alpha_n\sqrt{1+ 4s_0^2} -  M\Big(\textcolor{black}{ 2(1+\alpha_n)}+\textcolor{black}{\sqrt{n}}\sqrt{1+ 4s_0^2}\Big)\frac{\delta^+}{2},
$$
it clearly appears the contribution of the boundary term
\begin{align*}
K^R_{n,m,M}(s):&=\Big||s|^2 - \beta_n\Big|-2^{\frac{n}{2}}\Vert b\Vert_{L^\infty(\partial\Omega)}\Vert\tau_D\Vert^2 - \alpha_n\sqrt{1+ 4s_0^2} \nonumber
\\
&
- \frac{1}{2}
\Big[\Big( \Big||s|^2 - \beta_n\Big| - \alpha_n\sqrt{1+ 4s_0^2}-m^2\Big)+\sqrt{\Delta/4}\Big] .
\end{align*}
We finally get
$$
K^R_{n,m,M}(s):=\frac{1}{2}\Big||s|^2 - \beta_n\Big|-2^{\frac{n}{2}}\Vert b\Vert_{L^\infty(\partial\Omega)}\Vert\tau_D\Vert^2 -\frac{1}{2} \alpha_n\sqrt{1+ 4s_0^2}
+\frac{1}{2}m^2-\frac{1}{2}\sqrt{\Delta/4}
$$
and with the substitution of the explicit value of  $\Delta/4$ we obtain the explicit expression

\begin{align}\label{COERCONST2}
K^R_{n,m,M}(s):&=\frac{1}{2}\Big||s|^2 - \beta_n\Big| -\frac{1}{2} \alpha_n\sqrt{1+ 4s_0^2}
+\frac{1}{2}m^2
-\frac{1}{2} 2^{\frac{n}{2}-1}\Vert b\Vert_{L^\infty(\partial\Omega)}\Vert\tau_D\Vert^2\nonumber
\\
&
-\frac{1}{2}\sqrt{\Big( \Big||s|^2 - \beta_n\Big| - \alpha_n\sqrt{1+ 4s_0^2}-m^2\Big)^2
+M^2\Big(\textcolor{black}{ 2(1+\alpha_n)}+\textcolor{black}{\sqrt{n}}\sqrt{1+ 4s_0^2}\Big)^2}.
\end{align}
Now we have to impose that the coefficient $K^R_{n,m,M}(s)$ is positive. We observe that
 the
 inequality with the square root
\begin{align}
\Big||s|^2 - \beta_n\Big|& - \alpha_n\sqrt{1+ 4s_0^2}+m^2
-2^{\frac{n}{2}-1}\Vert b\Vert_{L^\infty(\partial\Omega)}\Vert\tau_D\Vert^2\nonumber
\\
&
>\sqrt{\Big( \Big||s|^2 - \beta_n\Big| - \alpha_n\sqrt{1+ 4s_0^2}-m^2\Big)^2
+M^2\Big(\textcolor{black}{ 2(1+\alpha_n)}+\textcolor{black}{\sqrt{n}}\sqrt{1+ 4s_0^2}\Big)^2}
\end{align}
is equivalent to the system of two inequalities
\begin{align*}
\Big||s|^2 - \beta_n\Big| - \alpha_n\sqrt{1+ 4s_0^2}
+m^2-2^{\frac{n}{2}-1}\Vert b\Vert_{L^\infty(\partial\Omega)}\Vert\tau_D\Vert^2
>0
\end{align*}
which is (\ref{QULI1})
and
\begin{align}\label{QULI21}
\Big(\Big||s|^2 - \beta_n\Big|& - \alpha_n\sqrt{1+ 4s_0^2}
+m^2-2^{\frac{n}{2}-1}\Vert b\Vert_{L^\infty(\partial\Omega)}\Vert\tau_D\Vert^2\Big)^2\nonumber
\\
&
>\Big( \Big||s|^2 - \beta_n\Big| - \alpha_n\sqrt{1+ 4s_0^2}-m^2\Big)^2
+M^2\Big(\textcolor{black}{ 2(1+\alpha_n)}+\textcolor{black}{\sqrt{n}}\sqrt{1+ 4s_0^2}\Big)^2,
\end{align}
since the term under the square root is $\Delta/4$ that is always positive.
With simplifications, the inequality (\ref{QULI21}) becomes
\begin{align}\label{SECODESTROB}
&
2\Big(2m^2-2^{\frac{n}{2}-1}\Vert b\Vert_{L^\infty(\partial\Omega)}\Vert\tau_D\Vert^2\Big)
\Big||s|^2 - \beta_n\Big|-4n{\color{black} M^2}s_0^2\nonumber
\\
&
-2\Big[ \alpha_n\Big(2m^2-2^{\frac{n}{2}-1}\Vert b\Vert_{L^\infty(\partial\Omega)}\Vert\tau_D\Vert^2\Big)+2M^2(1+\alpha_n)\sqrt{n}\Big]\sqrt{1+ 4s_0^2}\nonumber
\\
&
>m^4+M^2(n+4{\color{black} (1+\alpha_n)^2})-\Big(m^2-2^{\frac{n}{2}-1}\Vert b\Vert_{L^\infty(\partial\Omega)}\Vert\tau_D\Vert^2\Big)^2.
\end{align}
Defining the function
\begin{align}
\Lambda_{n,m,b}(p):=pm^2-2^{\frac{n}{2}-1}\Vert b\Vert_{L^\infty(\partial\Omega)}\Vert\tau_D\Vert^2
\end{align}
the estimate (\ref{SECODESTROB}) can be written in a more compact way as
\begin{align}\label{SECODESTROBPRIME}
2\Lambda_{n,m,b}(2)
\Big||s|^2 - \beta_n\Big|-4n{\color{black} M^2}s_0^2\nonumber
&-2\Big[ \alpha_n\Lambda_{n,m,b}(2)+2M^2(1+\alpha_n)\sqrt{n}\Big]\sqrt{1+ 4s_0^2}\nonumber
\\
&
>m^4+{\color{black} M^2(n+4(1+\alpha_n)^2)}-\Lambda_{n,m,b}(1)^2.
\end{align}
The above discussion shows that the coercive estimate (\ref{FINALSCQ}) becomes
\begin{align}
\Sc{q_s(F,F)}  &\geq K^R_{n,m,M}(s)(\|F\|_D^2+\|F\|_2^2)
\end{align}
with coercive constant $K^R_{n,m,M}(s)$ given by  (\ref{COERCONST1}).

\medskip
{\em Step 3}.  Conclusions from continuity and coercivity conditions.
Hence we have proven that $\Sc{q_s^R}$ is coercive in $H^1(\Omega)\times H^1(\Omega)$. Fixing now any function $f\in L^2(\Omega)$, we can consider the corresponding functional
		\begin{equation} \label{Eq:test_funct_hyper1}
			\varphi_f(G) := \inner{f,G}_{L^2}, \quad G\in H^1(\Omega).
		\end{equation}
		Then by Lemma \ref{lem_Properties} iii), this linear functional is bounded on $H^1(\Omega)$, i.e.,
		\begin{equation*}
			\modulo{\varphi_f(G)} = \modulo{\inner{f,G}_{L^2}} \leq 2^{\frac{n}{2}}\norm{f}_2\norm{G}_2 \leq 2^{\frac{n}{2}}\norm{f}_2\norm{G}_{H^1}, \quad G\in H^1(\Omega).
		\end{equation*}
		Hence, the assumptions of Lemma \ref{lem_Lax_Milgram} are satisfied and there exists a unique weak solution $F_f\in H^1(\Omega)$ such that
		\begin{equation*}
			q_s^R(F_f,G) = \varphi_f(G) = \inner{f,G}_2, \quad \text{for all } G\in H^1(\Omega).
		\end{equation*}

		For the $L^2$-estimate in (\ref{Eq:norm_est_hyper6}), we test (\ref{Eq:test_funct_hyper1}) with $G = F_f$. Using also Lemma \ref{lem_Properties} iv), this gives
		\begin{equation} \label{Eq:norm_sc_bound_hyper1}
			\Sc{q_s^R(F_f,F_f)} = \Sc{\inner{f,F_f}_{L^2}} \leq \norm{f}_2\norm{F_f}_2.
		\end{equation}
		Combining this inequality with the coercivity estimate (\ref{Eq:Lower_bound_Hyper}),
so the coercive estimate (\ref{FINALSCQ}) becomes
\begin{align}
\Sc{q_s^R(F_f,F_f)}  &\geq K^R_{n,m,M}(s)(\|F_f\|_D^2+\|F_f\|_2^2)
\end{align}
with coercive constant $K_{n,m,M}(s)$ given by  (\ref{COERCONST}), furthermore gives
		\begin{equation}\label{GENERALE}
			K_{n,m,M}^R(s)(\|F_f\|_D^2+\|F_f\|_2^2)\leq \Sc{q_s^R(F_f,F_f)} \leq \norm{f}_2\norm{F_f}_2.
		\end{equation}
Recall that $Q_s(\mathcal{D}_H)F = f$ so $F=Q_s(\mathcal{D}_H)^{-1}f$ we get the estimates for the pseudo $S$-resolvent
\begin{equation}\label{GENERALE1}
			K_{n,m,M}^R(s)(\|Q_s(\mathcal{D}_H)^{-1}f\|_D^2+\|Q_s(\mathcal{D}_H)^{-1}f\|_2^2)\leq \norm{f}_2\norm{Q_s(\mathcal{D}_H)^{-1}f}_2,
		\end{equation}
from which we get
\begin{equation*}
			K_{n,m,M}^R(s)\|Q_s(\mathcal{D}_H)^{-1}f\|_2^2\leq \norm{f}_2\norm{Q_s(\mathcal{D}_H)^{-1}f}_2.
		\end{equation*}
and we obtain the estimate
\begin{equation}\label{ELLEDUE}
			\|Q_s(\mathcal{D}_H)^{-1}f\|_2\leq \frac{1}{K_{n,m,M}^R(s)}\, \norm{f}_2.
		\end{equation}
From (\ref{GENERALE}) we also have
\begin{equation}\label{ELLEDUEPERD}
			K_{n,m,M}^R(s)\|Q_s(\mathcal{D}_H)^{-1}f\|_D^2\leq \norm{f}_2 \norm{Q_s(\mathcal{D}_H)^{-1}f}_2
		\end{equation}
so replacing estimate (\ref{ELLEDUE}) in (\ref{ELLEDUEPERD})
 we get
\begin{equation*}
			K_{n,m,M}^R(s)\|Q_s(\mathcal{D}_H)^{-1}f\|_D^2\leq \norm{f}_2 \norm{Q_s(\mathcal{D}_H)^{-1}f}_2
\leq  \frac{1}{K_{n,m,M}^R(s))}\norm{f}_2^2
		\end{equation*}
so finally we get
\begin{equation*}
			\|Q_s(\mathcal{D}_H)^{-1}f\|_D\leq
\frac{1}{K_{n,m,M}^R(s)}\, \norm{f}_2.
		\end{equation*}

\end{proof}

\begin{remark}
Condition \eqref{SECODESTROBPRIMEH} defines a nonempty set when the datum $b\in L^\infty(\partial\Omega)$ is suitably chosen compared with the constants $m$, $M$ which depend on $\Omega$.
\end{remark}

\section{Dirac operator on the spherical space with Dirichlet  boundary conditions}\label{sphericalDirichlet}

	Let us now consider the Dirac operator $\mathcal{D}_S$ on the spherical space $S$. The operator $\mathcal{D}_S$ is computed on page 275 in \cite{DiracHarm}. let us consider a realisation of the spherical space $S$ by defining the metric
	\begin{equation*}
		g(x) = \p{\frac{1}{1+\modulo{x}^2}}^2\sum_{j=1}^n \dif x_j^2
	\end{equation*}
	on $\BR^n$. Then the Dirac operator on the spherical space $S$ is given by

	\begin{equation}\label{SFERICDIRACBOOK}
		\mathcal{D}_S=(1+|x|^2)\sum_{i=1}^ne_i\Big(\partial_{x_i}+\frac{1}{1+|x|^2} d\tau(e_ix)\Big),
	\end{equation}
	for $x\in\mathbb{R}^n$.

Choosing  the representation given by the left multiplication, as we did for the hyperbolic Dirac operator, we have $d\tau(e_ix)=e_ix$, where $x=\sum_{i=1}^ne_ix_i$, thanks to Lemma \ref{dtaurep}.
So the Dirac on the spherical space becomes
$$
\mathcal{D}_S=(1+|x|^2)\sum_{i=1}^ne_i\partial_{x_i}-nx
$$
and if we denote by
\begin{equation}\label{DIRAC}
\mathcal{D}_e:=\sum_{i=1}^ne_i\partial_{x_i}
\end{equation}
the Euclidean Dirac operator, the Dirac operator $\mathcal{D}_S$  rewrites as
\begin{equation}\label{DIREUC}
\mathcal{D}_S=(1+|x|^2)\mathcal{D}_e-nx.
\end{equation}
We will need the following lemma in order to compute the operator $\mathcal{D}^2_S$ explicitly.
\begin{lemma}
Let $\mathcal{D}_e$ be the Euclidean Dirac defined in (\ref{DIRAC}) and
we denote by $E$ the Euler operator
\begin{equation}\label{EULERO}
E:=\sum_{i=1}^nx_i\partial_{x_i}.
\end{equation}
Then, the following identity
\begin{equation}\label{IDENTDIREULER}
\mathcal{D}_e(xF)=-2EF-nF-x\mathcal{D}_eF
\end{equation}
holds, for every smooth functions $F:\Omega\subseteq \mathbb{R}^n\to \mathbb{R}_n$.
\end{lemma}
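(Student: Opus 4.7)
The strategy is a direct computation using the Clifford anticommutation relation $e_ie_j + e_je_i = -2\delta_{ij}$ and the product rule, isolating the piece that reproduces $x\mathcal{D}_eF$ so that the residual terms assemble into $-2EF - nF$.

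First, I would expand $\mathcal{D}_e(xF)$ with $x = \sum_j e_j x_j$ by Leibniz:
\begin{equation*}
\mathcal{D}_e(xF) = \sum_{i,j} e_ie_j\,\partial_{x_i}(x_j F) = \sum_{i,j} e_ie_j\bigl(\delta_{ij} F + x_j\,\partial_{x_i}F\bigr).
\end{equation*}
The diagonal contribution from $\delta_{ij}$ collapses via $\sum_i e_i^2 = -n$ to $-nF$, leaving
\begin{equation*}
\mathcal{D}_e(xF) = -nF + \sum_{i,j} e_ie_j\, x_j\,\partial_{x_i}F.
\end{equation*}

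Next I would compute $x\mathcal{D}_eF = \sum_{i,j} e_je_i\, x_j\,\partial_{x_i}F$ and rewrite $e_je_i = -e_ie_j - 2\delta_{ij}$ to get
\begin{equation*}
x\mathcal{D}_eF = -\sum_{i,j} e_ie_j\, x_j\,\partial_{x_i}F - 2\sum_i x_i\,\partial_{x_i}F = -\sum_{i,j} e_ie_j\, x_j\,\partial_{x_i}F - 2EF.
\end{equation*}
Solving for the double sum yields $\sum_{i,j} e_ie_j\, x_j\,\partial_{x_i}F = -x\mathcal{D}_eF - 2EF$, and substituting this back into the previous display gives the claimed identity.

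The only subtlety is bookkeeping the order of Clifford basis vectors when commuting $e_i$ past $e_j$; no analytic difficulty arises since $F$ is smooth and everything is a finite sum. Hence the proof reduces to the two algebraic manipulations above.
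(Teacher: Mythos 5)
Your computation is correct and fills in precisely the details the paper elides: the paper's proof merely states that the identity ``is a well known fact and it follows from direct computations.'' Your Leibniz expansion plus the anticommutation trick $e_je_i=-e_ie_j-2\delta_{ij}$ is exactly that direct computation, carried out cleanly.
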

\begin{proof} It is a well known fact and it follows from direct computations.
\end{proof}

\begin{theorem}
Let $\mathcal{D}_S$ be the Dirac operator on the spherical space given by (\ref{DIREUC}),
$\mathcal{D}_e$ the Euclidean Dirac defined in (\ref{DIRAC}) and $E$ the Euler operator
as in (\ref{EULERO}).
Then, the square of the Dirac operator $\mathcal{D}_S$ is given by:
\begin{align}\label{DDUEQUAD}
\mathcal{D}_S^2F&
=(1+|x|^2)^2\mathcal{D}_e^2F
+2x(1+|x|^2)\mathcal{D}_eF\nonumber
\\
&
+2n(1+|x|^2)EF+n^2F
\end{align}
for every smooth functions $F:\Omega\subseteq \mathbb{R}^n\to \mathbb{R}_n$.
\end{theorem}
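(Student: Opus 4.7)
The plan is to square $\mathcal{D}_S = (1+|x|^2)\mathcal{D}_e - nx$ directly and organize the four resulting cross terms, using two elementary ingredients: the Leibniz rule for $\mathcal{D}_e$ against a real-valued scalar factor, and the identity \eqref{IDENTDIREULER} for $\mathcal{D}_e(xF)$. Writing
\begin{equation*}
\mathcal{D}_S^2F=(1+|x|^2)\mathcal{D}_e\bigl((1+|x|^2)\mathcal{D}_eF\bigr)-n(1+|x|^2)\mathcal{D}_e(xF)-nx(1+|x|^2)\mathcal{D}_eF+n^2 x(xF),
\end{equation*}
I would handle these four pieces one by one.

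For the first piece I would use $\mathcal{D}_e(fG)=(\mathcal{D}_ef)G+f\mathcal{D}_eG$ for a real scalar $f$, together with the computation $\mathcal{D}_e(1+|x|^2)=\sum_i e_i\cdot 2x_i=2x$. This yields $(1+|x|^2)\mathcal{D}_e((1+|x|^2)\mathcal{D}_eF)=(1+|x|^2)^2\mathcal{D}_e^2F+2(1+|x|^2)x\,\mathcal{D}_eF$. For the second piece I apply \eqref{IDENTDIREULER} to get $-n(1+|x|^2)\mathcal{D}_e(xF)=2n(1+|x|^2)EF+n^2(1+|x|^2)F+n(1+|x|^2)x\,\mathcal{D}_eF$. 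The third piece is already in closed form, and for the fourth piece I use the standard vector identity $x^2=-|x|^2$ (which follows at once from the Clifford relations $e_i^2=-1$ and $e_ie_j=-e_je_i$ applied to $x=\sum_i e_ix_i$), giving $n^2x(xF)=-n^2|x|^2F$.

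The last step is to collect terms. The $x\,\mathcal{D}_eF$-contributions combine as $2(1+|x|^2)+n(1+|x|^2)-n(1+|x|^2)=2(1+|x|^2)$, and the pure multiplication terms give $n^2(1+|x|^2)F-n^2|x|^2F=n^2F$, producing exactly \eqref{DDUEQUAD}.

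I do not expect any serious obstacle here: the content is entirely bookkeeping once the two facts $\mathcal{D}_e(1+|x|^2)=2x$ and \eqref{IDENTDIREULER} are on the table, and the only point where one has to be slightly careful is the non-commutativity, namely keeping $x$ on the correct side of $\mathcal{D}_eF$ (this is the reason the two $\pm n(1+|x|^2)x\,\mathcal{D}_eF$ terms cancel without producing a spurious commutator).
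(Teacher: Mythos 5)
Your proof is correct and follows essentially the same route as the paper's: both expand $\mathcal{D}_S^2F$ into the same four terms, invoke identity \eqref{IDENTDIREULER} for the $-n(1+|x|^2)\mathcal{D}_e(xF)$ term, use the Leibniz rule together with $\mathcal{D}_e(1+|x|^2)=2x$ for the leading term, simplify $x^2=-|x|^2$, and collect. The only cosmetic difference is that the paper already writes the final cross term as $-n^2|x|^2F$ in the initial expansion, whereas you isolate the $x^2=-|x|^2$ step explicitly — a harmless bit of extra bookkeeping.
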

\begin{proof}
Consider smooth functions $F:\Omega\subseteq \mathbb{R}^n\to \mathbb{R}_n$ where $\Omega$ is an open set. We observe that
\begin{align}\label{DESSEIN}
\mathcal{D}_S^2F&=\Big((1+|x|^2)\mathcal{D}_e-nx)\Big)\Big((1+|x|^2)\mathcal{D}_eF-nxF)\Big)\nonumber
\\
&
=(1+|x|^2)\mathcal{D}_e\Big((1+|x|^2)\mathcal{D}_eF\Big)
-n(1+|x|^2)\mathcal{D}_e\Big(xF\Big)
-nx(1+|x|^2)\mathcal{D}_eF
-n^2|x|^2F,
\end{align}
we observe that, by a direct computation, we obtain
\begin{align*}
(1+|x|^2)\mathcal{D}_e\Big((1+|x|^2)\mathcal{D}_eF\Big)=2x(1+|x|^2)\mathcal{D}_eF+(1+|x|^2)^2\mathcal{D}_e^2F
\end{align*}
and, using the identity (\ref{IDENTDIREULER}), we have
\begin{align*}
-n(1+|x|^2)\mathcal{D}_e\Big(xF\Big)
=n(1+|x|^2)\Big(2EF+nF+x\mathcal{D}_eF\Big).
\end{align*}

Now we replace the above two relation in (\ref{DESSEIN}) and we obtain
\begin{align*}
\mathcal{D}_S^2F&
=2x(1+|x|^2)\mathcal{D}_eF+(1+|x|^2)^2\mathcal{D}_e^2F
\\
&
+n(1+|x|^2)\Big(2EF+nF+x\mathcal{D}_eF \Big)
-nx(1+|x|^2)\mathcal{D}_eF
-n^2|x|^2F
\end{align*}
and also
\begin{align*}
\mathcal{D}_S^2F&
=(1+|x|^2)^2\mathcal{D}_e^2F
+2x(1+|x|^2)\mathcal{D}_eF
\\
&
+2n(1+|x|^2)EF+n^2(1+|x|^2)F+n(1+|x|^2)x\mathcal{D}_eF
-nx(1+|x|^2)\mathcal{D}_eF
-n^2|x|^2F
\end{align*}
so
\begin{align*}
\mathcal{D}_S^2F
=(1+|x|^2)^2\mathcal{D}_e^2F
+2x(1+|x|^2)\mathcal{D}_eF
+2n(1+|x|^2)EF+n^2F,
\end{align*}
and we finally get (\ref{DDUEQUAD}).
\end{proof}

\subsection{The weak formulation}
We consider the formal operator
$$
Q_s(\mathcal{D}_S) := \mathcal{D}_S^2 - 2s_0 \mathcal{D}_S + |s|^2.
$$
We integrate by parts the second order term
in (\ref{DDUEQUAD}), i.e. $(1+|x|^2)^2\mathcal{D}_e^2F$. Since for the Euclidean Dirac operator we have $\mathcal{D}_e^2=-\Delta_n$, where $\Delta_n$ is the Laplace operator in $n$ dimensions, we obtain
$$
\int_\Omega\overline{(1+|x|^2)^2\mathcal{D}_e^2F}Gdx=
-\int_\Omega (1+|x|^2)^2\Delta_n \overline{F} Gdx,
$$
where $\Omega$ is an open (bounded) set in $\mathbb{R}^n$ with smooth boundary $\partial\Omega$.
\begin{lemma}
Let  $\mathcal{D}_e$ be the Euclidean Dirac defined in (\ref{DIRAC}) and $E$ the Euler operator
as in (\ref{EULERO}).
Then, the integration by parts formula
\begin{align}\label{INTBYPATE}
\int_\Omega\overline{(1+|x|^2)^2\mathcal{D}_e^2F}Gdx=
\int_\Omega(1+|x|^2)^2\sum_{i=1}^n\partial_{x_i}\overline{F}\partial_{x_i}Gdx
+4\int_\Omega(1+|x|^2)E\overline{F}Gdx,
\end{align}
holds for any smooth functions $F$ and $G:\Omega\subseteq \mathbb{R}^n\to \mathbb{R}_n$ with  $G=0$ on $\partial\Omega$.
\end{lemma}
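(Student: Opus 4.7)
The plan is straightforward: rewrite the left-hand side using $\mathcal{D}_e^2=-\Delta_n$, then apply standard integration by parts component by component, using $G=0$ on $\partial\Omega$ to discard boundary contributions, and finally recognize the Euler operator in the resulting terms. I expect no serious obstacle; the only subtlety is handling the Clifford-valued conjugate $\overline{F}$ correctly, but since the weight $(1+|x|^2)^2$ is real-valued and the Euler operator $E$ involves only real coefficients, $E\overline{F}=\overline{EF}$ and integration by parts can be performed componentwise exactly as in the scalar case.

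Concretely, I would first record that $\mathcal{D}_e^2=-\Delta_n=-\sum_{i=1}^n\partial_{x_i}^2$, so that the left-hand side of \eqref{INTBYPATE} becomes
\begin{equation*}
\int_\Omega\overline{(1+|x|^2)^2\mathcal{D}_e^2F}G\,dx=-\sum_{i=1}^n\int_\Omega(1+|x|^2)^2(\partial_{x_i}^2\overline{F})G\,dx.
\end{equation*}
Then, for each fixed $i$, I apply the scalar integration by parts formula on the $i$-th partial derivative, viewing $(1+|x|^2)^2 G$ as the test component. Since $G$ vanishes on $\partial\Omega$ (and hence so does $(1+|x|^2)^2 G$), the boundary integral drops and one obtains
\begin{equation*}
-\int_\Omega(1+|x|^2)^2(\partial_{x_i}^2\overline{F})G\,dx=\int_\Omega(\partial_{x_i}\overline{F})\,\partial_{x_i}\!\bigl[(1+|x|^2)^2 G\bigr]\,dx.
\end{equation*}

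Next I use the Leibniz rule $\partial_{x_i}[(1+|x|^2)^2 G]=4x_i(1+|x|^2)G+(1+|x|^2)^2\partial_{x_i}G$. Substituting and summing over $i=1,\dots,n$ splits the right-hand side into two pieces:
\begin{equation*}
\sum_{i=1}^n\int_\Omega(1+|x|^2)^2(\partial_{x_i}\overline{F})(\partial_{x_i}G)\,dx+4\int_\Omega(1+|x|^2)\Bigl(\sum_{i=1}^n x_i\partial_{x_i}\overline{F}\Bigr)G\,dx.
\end{equation*}
The first piece is exactly the first term on the right-hand side of \eqref{INTBYPATE}. For the second piece, since the Euler operator $E=\sum_{i=1}^n x_i\partial_{x_i}$ has real coefficients it commutes with Clifford conjugation, i.e.\ $\sum_{i=1}^n x_i\partial_{x_i}\overline{F}=E\overline{F}$. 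Collecting the two contributions yields \eqref{INTBYPATE}, which completes the proof.
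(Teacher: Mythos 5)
Your proposal is correct and follows essentially the same route as the paper: both write $\mathcal{D}_e^2=-\Delta_n$, integrate by parts componentwise with the weight $(1+|x|^2)^2G$ (boundary term vanishing since $G|_{\partial\Omega}=0$), expand by the Leibniz rule $\partial_{x_i}[(1+|x|^2)^2G]=4x_i(1+|x|^2)G+(1+|x|^2)^2\partial_{x_i}G$, and recognize the Euler operator in the resulting sum.
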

\begin{proof}
Since we assume that $G=0$ on $\partial\Omega$, by Remark \ref{INTBYPARTFORM}, the integration part formula
\begin{align}\label{INT_BY_PARTS_PHERE}
&-\sum_{i=1}^n\Big\langle \partial_{x_i}
\Big(\partial_{x_i}F\Big),(1+|x|^2)^2G\Big\rangle_{L^2(\Omega, \mathbb{R}_n)}
=\sum_{i=1}^n\Big\langle
\partial_{x_i}F,\partial_{x_i}\Big((1+|x|^2)^2G\Big)\Big\rangle_{L^2(\Omega, \mathbb{R}_n)}\nonumber
\\
&
-\sum_{i=1}^n\Big\langle\nu_i\partial_{x_i}F,\Big((1+|x|^2)^2G\Big)\Big\rangle_{L^2(\partial\Omega, \mathbb{R}_n)},
\end{align}
where $\vec{\nu}=(\nu_1,\dots,\nu_n)$ outer unit normal vector to the boundary smooth boundary  $\partial\Omega$ of $\Omega$,
 gives
\begin{align}\label{BYPARTSDQUAD}
\int_\Omega\overline{(1+|x|^2)^2D^2F}Gdx&=
\int_\Omega\sum_{i=1}^n\partial_{x_i}\overline{F}\partial_{x_i}\Big((1+|x|^2)^2G\Big)dx\nonumber
\\
&
=
\int_\Omega\sum_{i=1}^n\Big[
\partial_{x_i}\overline{F}
\Big(4x_i(1+|x|^2)\Big)G+(1+|x|^2)^2\partial_{x_i}\overline{F}\partial_{x_i}G\Big]dx\nonumber
\\
&
=
\int_\Omega\sum_{i=1}^n\Big[4x_i(1+|x|^2)
\partial_{x_i}\overline{F}
G+(1+|x|^2)^2\partial_{x_i}\overline{F}\partial_{x_i}G\Big]dx\nonumber
\\
&
=
4\int_\Omega(1+|x|^2)E\overline{F}Gdx
+\int_\Omega(1+|x|^2)^2\sum_{i=1}^n\partial_{x_i}\overline{F}\partial_{x_i}Gdx,
\end{align}
so we get (\ref{INTBYPATE}).
\end{proof}
So now we can give the weak formulation to the equation
$$
\int_\Omega\overline{\mathcal{D}_S^2F}Gdx-2s_0 \int_\Omega\overline{\mathcal{D}_SF} G dx+|s|^2\int_\Omega\overline{F} G dx=\int_\Omega\overline{f} G dx
$$
with homogeneous Dirichlet boundary conditions,
 replacing $\mathcal{D}_S^2F$ given by (\ref{DDUEQUAD}) we get
we have
\begin{align*}
&\int_\Omega\Big(\overline{(1+|x|^2)^2\mathcal{D}_e^2F+2x(1+|x|^2)\mathcal{D}_eF+2n(1+|x|^2)EF+n^2F}\Big)Gdx
\\
&
-2s_0 \int_\Omega\overline{\mathcal{D}_SF} G dx+|s|^2\int_\Omega\overline{F} G dx=\int_\Omega\overline{f} G dx
\end{align*}
and also
\begin{align*}
&\int_\Omega
(1+|x|^2)^2\overline{\mathcal{D}_e^2F}  Gdx
\\
&
+2\int_\Omega (1+|x|^2)\overline{x\mathcal{D}_eF}Gdx
+2n\int_\Omega(1+|x|^2)E\overline{F}Gdx +
n^2\int_\Omega\overline{F}Gdx
\\
&
-2s_0 \int_\Omega\overline{\mathcal{D}_SF} G dx+|s|^2\int_\Omega\overline{F} G dx=\int_\Omega\overline{f} G dx
\end{align*}
Taking into account the integration by parts in (\ref{INTBYPATE})  we get
\begin{align*}
&\int_\Omega(1+|x|^2)^2\sum_{i=1}^n\partial_{x_i}\overline{F}\partial_{x_i}Gdx
+4\int_\Omega(1+|x|^2)E\overline{F}Gdx
\\
&
+2\int_\Omega (1+|x|^2)\overline{x\mathcal{D}_eF}Gdx
+2n\int_\Omega(1+|x|^2)E\overline{F}Gdx +
n^2\int_\Omega\overline{F}Gdx
\\
&
-2s_0 \int_\Omega\overline{\mathcal{D}_SF} G dx+|s|^2\int_\Omega\overline{F} G dx=\int_\Omega\overline{f} G dx
\end{align*}
and with simplifications we obtain
\begin{align*}
&\int_\Omega(1+|x|^2)^2\sum_{i=1}^n\partial_{x_i}\overline{F}\partial_{x_i}Gdx
+2(2+n)\int_\Omega(1+|x|^2)E\overline{F}Gdx
\\
&
+2\int_\Omega (1+|x|^2)\overline{x\mathcal{D}_eF}Gdx
-2s_0 \int_\Omega\overline{\mathcal{D}_SF} G dx+(|s|^2+n^2)\int_\Omega\overline{F} G dx
\\
&
=\int_\Omega\overline{f} G dx.
\end{align*}
We now consider the  sesquilinear form
\begin{align}\label{FORMSFERIC}
q_s(F,G):=
&\int_\Omega(1+|x|^2)^2\sum_{i=1}^n\partial_{x_i}\overline{F}\partial_{x_i}Gdx
+2(2+n)\int_\Omega(1+|x|^2)E\overline{F}Gdx\nonumber
\\
&
+2\int_\Omega (1+|x|^2)\overline{x\mathcal{D}_eF}Gdx
-2s_0 \int_\Omega\overline{\mathcal{D}_SF} G dx+(|s|^2+n^2)\int_\Omega\overline{F} G dx .
\end{align}

\begin{problem}
Let $\mathcal{D}_S$ be the Dirac operator on the spherical space given by (\ref{DIREUC}).
Recall that $\mathcal{D}_e:=\sum_{i=1}^ne_i\partial_{x_i}$ is the Euclidean Dirac operator as in (\ref{DIRAC})  and
$E=\sum_{i=1}^nx_i\partial_{x_i}$ is the Euler operator as in (\ref{EULERO}).
Using the scalar product $\inner{F,G}_2$ defined in (\ref{SCLAL2}) we write the  sesquilinear form
$q_s(F,G)$ defined in (\ref{FORMSFERIC})
 as
\begin{align}\label{FORMWITHSCALPROD2}
q_s(F,G) & = \sum_{i=1}^n\inner{ (1+|x|^2)^2\partial_{x_i}F,\partial_{x_i}G}_2
+2(2+n)\inner{(1+|x|^2)EF,G}_2\nonumber
\\
&
+2\inner{(1+|x|^2)x\mathcal{D}_eF,G}_2
-2s_0 \inner{\mathcal{D}_SF, G}_2
+(|s|^2+n^2)\inner{F,G}_2
\end{align}
with $\textup{dom } (q_s):= H^1_0(\Omega, \mathbb{R}_n) \times H^1_0(\Omega, \mathbb{R}_n)$.
Show that for some values of the spectral parameter
$s\in\mathbb{R}^{n+1},$ for every $f\in L^2(\Omega, \mathbb{R}_n)$ there exists a unique solution $F_f\in H^1_0(\Omega, \mathbb{R}_n)$ such that
	\begin{equation*}
		q_s(F_f,G) = \inner{f,G}_2, \quad \text{for all} \ G\in H^1_0(\Omega, \mathbb{R}_n).
	\end{equation*}
	Furthermore, determine $L^2$- and $D$-estimates of $F_f$, depending on the parameter $s\in \mathbb{R}^{n+1}$.
\end{problem}

Let us assume that $\Omega$ is such that $0\leq m<M<+\infty$ where $n,M$ are the constants defined by
\begin{equation}\label{CONTnM2}
\inf_{x\in\Omega}|x|=m,\ \ \ \ \sup_{x\in\Omega}|x|=M.
\end{equation}

\begin{lemma}\label{STIMEExDeD}
Let $\mathcal{D}_e$ be the Euclidean Dirac operator defined in (\ref{DIRAC}), $E$ the Euler operators as in (\ref{EULERO}) and $\mathcal{D}_S$ the Dirac operator on spherical space in (\ref{DIREUC}). Then,
for all $F\in H^1(\Omega, \mathbb{R}_n)$,  we have the following estimates

\begin{align}
\label{STIMADIRAC}
\|\mathcal{D}_eF\|_2&\leq \sqrt{n}\|F\|_D,
\\
\label{STIMAXDIRAC}
\|x\mathcal{D}_eF\|_2&\leq \sqrt{n}M\|F\|_D,
\\
\label{STIMAEULERO}
\|EF\|_2&\leq \sqrt{n}M\|F\|_D,
\\
\label{STIMADIRACEUCL}
\|\mathcal{D}_SF\|_2&\leq  \textcolor{black}{(1+M^2)\sqrt{n}\|F\|_D+nM\|F\|_2},
\end{align}
where  $\sup_{x\in\Omega}|x|=M$.
\end{lemma}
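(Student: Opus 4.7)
The plan is to derive each of the four inequalities from three elementary facts: the pointwise identity $|ya|_{\mathbb{R}_n}=|y|\,|a|_{\mathbb{R}_n}$ valid for any paravector $y\in\mathbb{R}^{n+1}$ and any $a\in\mathbb{R}_n$ (the pointwise analogue of Lemma \ref{lem_Properties} ii)), the uniform bound $|x|\le M$ on $\Omega$ from \eqref{CONTnM2}, and the Cauchy--Schwarz inequality $\sum_{i=1}^n\|u_i\|_2\le\sqrt{n}\bigl(\sum_{i=1}^n\|u_i\|_2^2\bigr)^{1/2}$.

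For \eqref{STIMADIRAC}, I would write $\mathcal{D}_eF=\sum_i e_i\partial_{x_i}F$, apply the triangle inequality together with $|e_ia|=|a|$ pointwise to obtain $\|\mathcal{D}_eF\|_2\le\sum_i\|\partial_{x_i}F\|_2$, and then Cauchy--Schwarz on the sum of $n$ terms to produce the factor $\sqrt{n}$ and recover $\|F\|_D$. For \eqref{STIMAXDIRAC}, the paravector identity yields $|x\,\mathcal{D}_eF(x)|=|x|\,|\mathcal{D}_eF(x)|\le M\,|\mathcal{D}_eF(x)|$ at each point of $\Omega$; integrating and invoking \eqref{STIMADIRAC} closes the estimate. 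For \eqref{STIMAEULERO}, I would use $EF=\sum_i x_i\partial_{x_i}F$ with $|x_i|\le|x|\le M$, apply the triangle inequality to reach $\|EF\|_2\le M\sum_i\|\partial_{x_i}F\|_2$, and apply Cauchy--Schwarz once more to insert the $\sqrt{n}$.

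For \eqref{STIMADIRACEUCL}, starting from the representation $\mathcal{D}_SF=(1+|x|^2)\mathcal{D}_eF-nxF$ in \eqref{DIREUC}, I would apply the triangle inequality, bound $(1+|x|^2)\le 1+M^2$ on $\Omega$ and combine with \eqref{STIMADIRAC} for the first term, and use the pointwise paravector identity $|xF(x)|=|x|\,|F(x)|\le M\,|F(x)|$ for the second term to obtain the $nM\|F\|_2$ contribution.

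None of the steps is delicate. The only point requiring a moment's reflection is the multiplication by the variable paravector $x$: since $x(\cdot)\in\mathbb{R}^n$ is a vector (hence a paravector) at each point, the identity $\bar x x=|x|^2\in\mathbb{R}$ forces $|xF(x)|^2=\operatorname{Sc}(\overline{F(x)}\,|x|^2\,F(x))=|x|^2|F(x)|^2$, which lifts the Clifford-module identity of Lemma \ref{lem_Properties} ii) to a pointwise statement suitable for integration against $dx$.
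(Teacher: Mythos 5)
Your proposal is correct and follows essentially the same route as the paper: triangle inequality plus Cauchy--Schwarz for \eqref{STIMADIRAC} and \eqref{STIMAEULERO}, the paravector modulus identity together with the pointwise bound $|x|\le M$ for \eqref{STIMAXDIRAC} and the $nxF$ term in \eqref{STIMADIRACEUCL}, and $1+|x|^2\le 1+M^2$ for the remaining term. The only difference is cosmetic: you make explicit that the identity $|xF(x)|=|x|\,|F(x)|$ holds pointwise (via $\bar x x=|x|^2$) before integrating, whereas the paper writes $\|x\mathcal{D}_eF\|_2=|x|\,\|\mathcal{D}_eF\|_2$ somewhat informally with $|x|$ a function; your phrasing is the more careful rendering of the same step.
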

\begin{proof}
Estimate (\ref{STIMADIRAC}) follows from
$$
\|\mathcal{D}_eF\|_2=
\|\sum_{i=1}^ne_i\partial_{x_i}F\|_2\leq \sum_{i=1}^n \|\partial_{x_i}F\|_2\leq(\sum_{i=1}^n1^2)^{1/2}
\Big(\sum_{i=1}^n \|\partial_{x_i}F\|_2^2\Big)^{1/2}=\sqrt{n}\|F\|_D
$$
since $x$ is a 1-vector, estimate (\ref{STIMAXDIRAC}) follows from Lemma \ref{lem_Properties} ii) and estimate (\ref{STIMADIRAC})
$$
\|x\mathcal{D}_eF\|_2=|x| \|\mathcal{D}_eF\|_2\leq \Big(\sup_{x\in\Omega}|x|\Big) \sqrt{n}\|F\|_D=\sqrt{n}M\|F\|_D.
$$
Estimate (\ref{STIMAEULERO}) follows from
$$
\|EF\|_2=\|\sum_{i=1}^nx_i\partial_{x_i}F\|_2\leq
\sum_{i=1}^n|x_i|\|\partial_{x_i}F\|_2\leq \sqrt{n}M\|F\|_D
$$
and finally (\ref{STIMADIRACEUCL}) follows from
$$
\|\mathcal{D}_SF\|_2\leq \|(1+|x|^2)\mathcal{D}_eF\|_2+n|x|\|F\|_2\leq (1+M^2)\sqrt{n}\|F\|_D+nM\|F\|_2.
$$
\end{proof}

\begin{theorem} \label{THFORTHESPHER}
		Let $\Omega\subset \mathbb{R}^{n}$ be a bounded set with smooth boundary and let
$\mathcal{D}_S$ the Dirac operator on spherical space in (\ref{DIREUC}).
Let us consider also that there exist two constants $0\leq m<M<+\infty$ such that
\begin{equation*}
\inf_{x\in\Omega}|x|=m,\ \ \ \ \sup_{x\in\Omega}|x|=M.
\end{equation*}
Let $s\in\BR^{n+1}$ be such that
\begin{equation}\label{super}
|s|^2+n^2-2|s_0| nM +(1+m^2)^2>0
\end{equation}
and
\begin{equation}\label{SECONDAPPPP}
\begin{split}
&|\underline{s}|^2(1+m^2)^2-s_0^2\Big(n(1+M^2)^2-(1+m^2)^2\Big)
-2nM\Big((3+n)(1+M^2)^2+ (1+m^2)^2 \Big)|s_0|
\\
&
>n(3+n)^2M^2(1+M^2)^2-n^2(1+m^2)^2.
\end{split}
\end{equation}
Then, for every $f\in L^2(\Omega)$ there exists a unique $F_f\in H_0^1(\Omega)$ such that
		\begin{equation}
			q_s(F_f,G) = \inner{f,G}_2, \qquad \text{for all } G\in H_0^1(\Omega).
		\end{equation}
where $q_s(F,G) $ is the sesquilinear form defined in (\ref{FORMWITHSCALPROD2}),
Moreover, this solution satisfies the bounds
		\begin{equation} \label{Eq:norm_est_hyper5}
			\|Q_s(\mathcal{D}_S)^{-1}f\|_2\leq \frac{1}{H_{n,m,M}(s)}\, \norm{f}_2,
		\ \ \ {\rm and}\ \
			\|Q_s(\mathcal{D}_S)^{-1}f\|_D\leq \frac{1}{H_{n,m,M}(s)} \,\norm{f}_2,
		\end{equation}
where

\begin{align}\label{COSTHNMM}
H_{n,m,M}(s)&=
\frac 12\left(|s|^2+n^2\textcolor{black}{-2|s_0| nM }+(1+m^2)^2\right)
\\
& -\frac 12 \sqrt{\Big((|s|^2+n^2\textcolor{black}{-2|s_0| nM })- (1+m^2)^2\Big)^2+4\Big(\sqrt{n} \textcolor{black}{ (1+M^2) }\Big((3+n)M+|s_0|\Big)\Big)^2}.
\nonumber
\end{align}

	\end{theorem}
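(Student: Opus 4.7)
The plan is to apply the Clifford Lax-Milgram lemma (Lemma \ref{lem_Lax_Milgram}) to the sesquilinear form $q_s$ in \eqref{FORMWITHSCALPROD2} on $H^1_0(\Omega,\mathbb{R}_n)\times H^1_0(\Omega,\mathbb{R}_n)$, closely following the template of Theorem \ref{Th:Main_Hyper}. The \textbf{continuity} of $q_s$ is routine: using $|x|\leq M$ we have $1+|x|^2\leq 1+M^2$, so the gradient term is bounded by $2^{n/2}(1+M^2)^2\|F\|_D\|G\|_D$; the Euler, $x\mathcal{D}_e$, and $\mathcal{D}_S$ contributions are controlled by the estimates \eqref{STIMAEULERO}, \eqref{STIMAXDIRAC}, and \eqref{STIMADIRACEUCL} of Lemma \ref{STIMEExDeD} combined with Lemma \ref{lem_Properties}; and the mass term is immediate. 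This yields $|q_s(F,G)|\leq C(s,n,M)\|F\|_{H^1}\|G\|_{H^1}$.

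For the \textbf{coercivity}, take $G=F$ and pass to the scalar part. The first term gives $\mathrm{Sc}\int_\Omega(1+|x|^2)^2\sum|\partial_{x_i}F|^2\,dx\geq (1+m^2)^2\|F\|_D^2$ because $|x|\geq m$. The Euler contribution is bounded in modulus by $2(2+n)(1+M^2)\sqrt{n}M\|F\|_D\|F\|_2$ via \eqref{STIMAEULERO}; the $x\mathcal{D}_e$ term by $2(1+M^2)M\sqrt{n}\|F\|_D\|F\|_2$ via \eqref{STIMAXDIRAC}; and the $\mathcal{D}_S$ term by $2|s_0|\big[(1+M^2)\sqrt{n}\|F\|_D+nM\|F\|_2\big]\|F\|_2$ via \eqref{STIMADIRACEUCL}. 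Collecting,
\begin{equation*}
\mathrm{Sc}(q_s(F,F))\geq A\|F\|_D^2+B\|F\|_2^2-C\|F\|_D\|F\|_2,
\end{equation*}
with $A:=(1+m^2)^2$, $B:=|s|^2+n^2-2|s_0|nM$, and $C:=2\sqrt{n}(1+M^2)\bigl((3+n)M+|s_0|\bigr)$.

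Next I apply \textbf{Young's inequality} with parameter $\delta>0$ to split the cross term and choose $\delta$ to equalize the resulting coefficients of $\|F\|_D^2$ and $\|F\|_2^2$. This produces the quadratic $C\delta^2-2(B-A)\delta-C=0$, whose unique positive root $\delta^+$ makes the common coefficient equal to $\tfrac{1}{2}(A+B)-\tfrac{1}{2}\sqrt{(B-A)^2+C^2}$, which is precisely $H_{n,m,M}(s)$ in \eqref{COSTHNMM}. Positivity of $H_{n,m,M}(s)$ is equivalent to the pair of inequalities $A+B>0$ and $(A+B)^2>(B-A)^2+C^2$; the first is exactly condition \eqref{super}, while the second simplifies to $4AB>C^2$, i.e.\ $(1+m^2)^2 B>n(1+M^2)^2[(3+n)M+|s_0|]^2$. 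Expanding $B=s_0^2+|\underline{s}|^2+n^2-2|s_0|nM$ and rearranging yields precisely \eqref{SECONDAPPPP}.

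Having verified continuity and coercivity with constant $H_{n,m,M}(s)>0$, Lemma \ref{lem_Lax_Milgram} applied to the bounded functional $\varphi_f(G)=\inner{f,G}_2$ yields existence and uniqueness of $F_f\in H^1_0(\Omega)$ with $q_s(F_f,G)=\inner{f,G}_2$ for all $G\in H^1_0(\Omega)$. Testing with $G=F_f$, using Lemma \ref{lem_Properties} iv), and the coercive bound give $H_{n,m,M}(s)(\|F_f\|_D^2+\|F_f\|_2^2)\leq\|f\|_2\|F_f\|_2$, from which both estimates in \eqref{Eq:norm_est_hyper5} follow by the same two-line manipulation as at the end of Theorem \ref{Th:Main_Hyper}, writing $F_f=Q_s(\mathcal{D}_S)^{-1}f$. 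The main obstacle is purely bookkeeping: keeping track of how the contributions of $E$, $x\mathcal{D}_e$ and $\mathcal{D}_S$ combine into the single effective constant $C$ and then verifying that the algebraic reduction of $H_{n,m,M}(s)>0$ yields precisely the explicit conditions \eqref{super}--\eqref{SECONDAPPPP}; all of this is routine once $\delta^+$ is identified correctly.
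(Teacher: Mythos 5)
Your proof is correct and follows essentially the same route as the paper: continuity from Lemmas \ref{lem_Properties} and \ref{STIMEExDeD}, coercivity via the lower bound ${\rm Sc}(q_s(F,F))\geq A\|F\|_D^2+B\|F\|_2^2-C\|F\|_D\|F\|_2$ with $A=(1+m^2)^2$, $B=|s|^2+n^2-2|s_0|nM$, $C=2\sqrt n(1+M^2)((3+n)M+|s_0|)$, the Young/equalize-$\delta$ optimization giving the common coefficient $\frac12(A+B)-\frac12\sqrt{(B-A)^2+C^2}=H_{n,m,M}(s)$, and finally Lax-Milgram plus the testing $G=F_f$ to get \eqref{Eq:norm_est_hyper5}. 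Your reduction of $H_{n,m,M}(s)>0$ to \eqref{super} together with $4AB>C^2$ matches the paper's derivation of \eqref{SECONDAPPPP} exactly; the only (harmless) difference is the more compact $A,B,C$ bookkeeping.
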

\begin{proof} We split the proof in steps.

\medskip
{\em Step 1}. The continuity follows from Lemma \ref{lem_Properties} and Lemma \ref{STIMEExDeD}, in fact we have
\begin{align}\label{continuity}
|q_s(F,G)| & \leq \Big|\sum_{i=1}^n\inner{ (1+|x|^2)^2\partial_{x_i}F,\partial_{x_i}G}_2\Big|
+2(2+n)|\inner{(1+|x|^2)EF,G}_2|\nonumber
\\
&
+2|\inner{(1+|x|^2)x\mathcal{D}_eF,G}_2|
+2|s_0|\ |\inner{\mathcal{D}_SF, G}_2|
+(|s|^2+n^2)|\inner{F,G}_2|
\end{align}
observe that for the first term on the right hand side of (\ref{continuity}) we have
\begin{align}\label{ESTIMSECORDER}
\Big|\sum_{i=1}^n\inner{ (1+|x|^2)^2\partial_{x_i}F,\partial_{x_i}G}_2\Big|
&\leq(1+M^2)^2\sum_{i=1}^n\Big|\inner{ \partial_{x_i}F,\partial_{x_i}G}_2\Big|\nonumber
\\
&
\leq(1+M^2)^2\sum_{i=1}^n 2^{n/2}\|\partial_{x_i}F\|_2 \|\partial_{x_i}G\|_2\nonumber
\\
&
\leq(1+M^2)^2 2^{n/2}\sum_{i=1}^n \Big(\|\partial_{x_i}F\|^2_2\Big)^{1/2} \nonumber \sum_{i=1}^n\Big(\|\partial_{x_i}G\|^2_2\Big)^{1/2}
\\
&
\leq(1+M^2)^2 2^{n/2}\|F\|_D  \|G\|_D,
\end{align}
so we get
\begin{align}\label{continuity1}
|q_s(F,G)| & \leq 2^{n/2}(1+M^2)^2\|F\|_D  \|G\|_D\nonumber
\\
&
+2(2+n)2^{n/2}\|(1+|x|^2)EF\|_2\ \|G\|_2\nonumber
\\
&
+2^{n/2+1}\|(1+|x|^2)x\mathcal{D}_eF\|_2  \|G\|_2\nonumber
\\
&
+\textcolor{black}{2^{n/2+1}|s_0| \|\mathcal{D}_SF\|_2 \|G\|_2}\nonumber
\\
&
+2^{n/2}(|s|^2+n^2)\|F\|_2 \|G\|_2.
\end{align}

Now using Lemma \ref{STIMEExDeD} we obtain

\begin{align}\label{continuity2}
|q_s(F,G)| & \leq 2^{n/2}(1+M^2)^2\|F\|_D  \|G\|_D\nonumber
\\
&
+2(2+n)2^{n/2}(1+M^2)\sqrt{n}M\|F\|_D\ \|G\|_2\nonumber
\\
&
+2^{n/2+1}(1+M^2)\sqrt{n}M\|F\|_D  \|G\|_2\nonumber
\\
&
+2^{n/2+1}|s_0|\textcolor{black}{\Big( (1+M^2)\sqrt{n}\|F\|_D+nM\|F\|_2 \Big)}\|G\|_2\nonumber
\\
&
+2^{n/2}(|s|^2+n^2)\|F\|_2 \|G\|_2
\end{align}
from which we deduce that there exists a positive constant
$C_1(s,n,M)$ such that
\begin{equation}\label{CONTINSPHER}
|q_s(F,G)| \leq C_1(s,n,M)\|F\|_{H_0^1}\|G\|_{H_0^1},
\end{equation}
since we have set $\|F\|^2_{H_0^1}:= \norm{F}_2^2+\norm{F}_D^2$,
 the  sesquilinear form is continuous from $H^1_0(\Omega, \mathbb{R}_n) \times H^1_0(\Omega, \mathbb{R}_n)$ to $\mathbb{R}_n$.

\medskip
{\em Step 2}. Coercivity. The coercivity regards the scalar part of $q_s(F,F)$, so we get

\begin{align}
{\rm Sc}(q_s(F,F)) & = {\rm Sc}\sum_{i=1}^n\inner{ (1+|x|^2)^2\partial_{x_i}F,\partial_{x_i}F}_2
+2(2+n){\rm Sc}\inner{(1+|x|^2)EF,F}_2\nonumber
\\
&
+2{\rm Sc}\inner{(1+|x|^2)x\mathcal{D}_eF,F}_2
-2s_0 {\rm Sc}\inner{\mathcal{D}_SF, F}_2
+(|s|^2+n^2){\rm Sc}\inner{F,F}_2
\end{align}
so that

\begin{align*}
{\rm Sc}(q_s(F,F))&\geq (1+m^2)^2\|F\|^2_D+(|s|^2+n^2)\|F\|^2_2
\\
&
-2(2+n)\Big|{\rm Sc}\inner{(1+|x|^2)EF,F}_2\nonumber\Big|
\\
&
-2\Big|{\rm Sc}\inner{(1+|x|^2)x\mathcal{D}_eF,F}_2\Big|
-2|s_0| \Big|{\rm Sc}\inner{\mathcal{D}_SF, F}_2\Big|.
\end{align*}
We also have

\begin{align*}
{\rm Sc}(q_s(F,F))&\geq (1+m^2)^2\|F\|^2_D+(|s|^2+n^2)\|F\|^2_2
\\
&
-2(2+n)(1+M^2)\|EF\|_2 \|F\|_2
\\
&
-2(1+M^2)\|x\mathcal{D}_eF\|_2 \|F\|_2
-2|s_0| \|\mathcal{D}_SF\|_2 \|F\|_2.
\end{align*}
Lemma \ref{STIMEExDeD} allows to deduce

\begin{align*}
{\rm Sc}(q_s(F,F))&\geq (1+m^2)^2\|F\|^2_D+(|s|^2+n^2)\|F\|^2_2
\\
&
-2(2+n)(1+M^2)\sqrt{n}M\|F\|_D \|F\|_2
\\
&
-2(1+M^2)\sqrt{n}M\|F\|_D \|F\|_2
-2|s_0| \textcolor{black}{\Big( (1+M^2)\sqrt{n}\|F\|_D+nM\|F\|_2 \Big)} \|F\|_2
\end{align*}
and

\begin{align*}
{\rm Sc}(q_s(F,F))&\geq (1+m^2)^2\|F\|^2_D+(|s|^2+n^2)\|F\|^2_2
\\
&
-2(2+n)(1+M^2)\sqrt{n}M\|F\|_D \|F\|_2
\\
&
-2(1+M^2)\sqrt{n}M\|F\|_D \|F\|_2
\\
&
\textcolor{black}{ -2|s_0| (1+M^2)\sqrt{n}\|F\|_D\|F\|_2}
\textcolor{black}{-2|s_0| nM\|F\|_2^2 }.
\end{align*}
Collecting the terms we obtain

\begin{align*}
{\rm Sc} (q_s(F,F))&\geq (1+m^2)^2\|F\|^2_D+(|s|^2+n^2\textcolor{black}{-2|s_0| nM } )\|F\|^2_2
\\
&
-2\sqrt{n}\Big((2+n)(1+M^2)M+(1+M^2)M+|s_0|  \textcolor{black}{ (1+M^2) }
\Big)\|F\|_D \|F\|_2
\end{align*}
and also
\begin{align*}
{\rm Sc} (q_s(F,F))&\geq (1+m^2)^2\|F\|^2_D+(|s|^2+n^2\textcolor{black}{-2|s_0| nM } )\|F\|^2_2
\\
&
-2\sqrt{n} \textcolor{black}{ (1+M^2) }\Big((3+n)M+|s_0|
\Big)\|F\|_D \|F\|_2.
\end{align*}
For  $\delta>0$, using the inequality
		\begin{equation*}
			\norm{F}_D\norm{F}_2 \leq \frac{1}{2\delta}\norm{F}_D^2 + \frac{\delta}{2}\norm{F}_2^2,
		\end{equation*}
we get
\begin{align*}
{\rm Sc}(q_s(F,F))&\geq (1+m^2)^2\|F\|^2_D+(|s|^2+n^2\textcolor{black}{-2|s_0| nM } )\|F\|^2_2
\\
&
-\sqrt{n} \textcolor{black}{ (1+M^2) }\Big((3+n)M+|s_0|
\Big)\Big(\frac{1}{\delta}\norm{F}_D^2 + \delta\norm{F}_2^2\Big)
\end{align*}
which gives
\begin{align}\label{COERCSTEPONESPHERE}
{\rm Sc}(q_s(F,F))&\geq
\Big[ (1+m^2)^2-\frac{1}{\delta}\sqrt{n} \textcolor{black}{ (1+M^2) }\Big((3+n)M+|s_0|
\Big)\Big]\|F\|^2_D\nonumber
\\
&
+\Big[(|s|^2+n^2\textcolor{black}{-2|s_0| nM })-\delta\sqrt{n} \textcolor{black}{ (1+M^2) }\Big((3+n)M+|s_0|
\Big) \Big]\|F\|^2_2 .
\end{align}
Now we reason as in the case of the hyperbolic space and we choose $\delta$ such that
the coefficients of $\|F\|^2_D$ and $\|F\|^2_2$ turn out to be equal.
 For this purpose we have to determine $\delta>0$ such that
\begin{equation}
\label{EQUA_FOR_DELTA_SPHERE_DIRIC}
(1+m^2)^2-\frac{1}{\delta}\sqrt{n} \textcolor{black}{ (1+M^2) }\Big((3+n)M+|s_0|
\Big)=
(|s|^2+n^2\textcolor{black}{-2|s_0| nM })-\delta\sqrt{n} \textcolor{black}{ (1+M^2) }\Big((3+n)M+|s_0|
\Big).
\end{equation}
Set
\begin{equation}\label{AzeroSphere}
A(s_0):=\sqrt{n} \textcolor{black}{ (1+M^2) }\Big((3+n)M+|s_0|\Big)
\end{equation}
the equation in $\delta$ becomes
$$
(1+m^2)^2-\frac{1}{\delta}A(s_0)=
(|s|^2+n^2\textcolor{black}{-2|s_0| nM })-\delta A(s_0)
$$
that is
$$
\delta^2 A(s_0)-\delta \Big((|s|^2+n^2\textcolor{black}{-2|s_0| nM })- (1+m^2)^2\Big)-A(s_0)=0.
$$
By setting
$$
a:=A(s_0), \ \ b:=-\Big((|s|^2+n^2\textcolor{black}{-2|s_0| nM })- (1+m^2)^2\Big), \ \ \ c:=-A(s_0)
$$
we obtain that
\begin{equation}\label{HnmM}
\Delta:=b^2-4ac=
\Big((|s|^2+n^2\textcolor{black}{-2|s_0| nM })- (1+m^2)^2\Big)^2+4\Big(\sqrt{n} \textcolor{black}{ (1+M^2) }\Big((3+n)M+|s_0|\Big)\Big)^2
\end{equation}
and the two solutions, that are real since $\Delta >0$ \textcolor{black}{(indeed $\Delta =0$ would imply that the two quantities that are squared in \eqref{HnmM} both vanish and this is not possible) are:}
\begin{equation}\label{deltaplussphere}
\delta^\pm=\frac{1}{2A(s_0)}\Big(|s|^2+n^2\textcolor{black}{-2|s_0| nM }- (1+m^2)^2\pm\sqrt{\Delta}\Big).
\end{equation}
It is immediate to see that
only $\delta^+$ is positive, so we get
\begin{equation}\label{AESSEZERODELTA}
A(s_0)\delta^+=\frac{1}{2}\Big(|s|^2+n^2\textcolor{black}{-2|s_0| nM }- (1+m^2)^2+\sqrt{\Delta}\Big).
\end{equation}
\textcolor{black}{The coefficient of $\|F\|^2_2$ (and $\|F\|^2_D$), denoted by $H_{n,m,M}(s)$ is then
$$
H_{n,m,M}(s):=(|s|^2+n^2\textcolor{black}{-2|s_0| nM })-\delta^+\sqrt{n} \textcolor{black}{ (1+M^2) }\Big((3+n)M+|s_0|
\Big)
$$
and replacing $A(s_0)\delta^+$ in (\ref{AESSEZERODELTA}) it becomes
\[
\begin{split}
H_{n,m,M}(s)&=|s|^2+n^2\textcolor{black}{-2|s_0| nM }-\frac 12 \left(|s|^2+n^2\textcolor{black}{-2|s_0| nM }-(1+m^2)^2+\sqrt{\Delta}\right)\\
&=\frac 12\left(|s|^2+n^2\textcolor{black}{-2|s_0| nM }+(1+m^2)^2\right)
\\
& -\frac 12 \sqrt{\Big((|s|^2+n^2\textcolor{black}{-2|s_0| nM })- (1+m^2)^2\Big)^2+4\Big(\sqrt{n} \textcolor{black}{ (1+M^2) }\Big((3+n)M+|s_0|\Big)\Big)^2}.
\end{split}
\]
We now impose that $H_{n,m,M}$ is positive, namely
\[
\begin{split}
&|s|^2+n^2\textcolor{black}{-2|s_0| nM }+(1+m^2)^2>\\
&  \sqrt{\Big(|s|^2+n^2\textcolor{black}{-2|s_0| nM }- (1+m^2)^2\Big)^2+4\Big(\sqrt{n} \textcolor{black}{ (1+M^2) }\Big((3+n)M+|s_0|\Big)\Big)^2}
\end{split}
\]
from which we obtain the system
\begin{equation}
|s|^2+n^2\textcolor{black}{-2|s_0| nM }+(1+m^2)^2>0
\end{equation}
and
\[
\begin{split}
&\Big(|s|^2+n^2\textcolor{black}{-2|s_0| nM }+(1+m^2)^2\Big)^2
\\
&
>\Big((|s|^2+n^2\textcolor{black}{-2|s_0| nM })- (1+m^2)^2\Big)^2+4\Big(\sqrt{n} \textcolor{black}{ (1+M^2) }\Big((3+n)M+|s_0|\Big)\Big)^2.
\end{split}
\]
}
We then have
\[
\begin{split}
&\Big(|s|^2+n^2\textcolor{black}{-2|s_0| nM }\Big)(1+m^2)^2
>\Big(\sqrt{n} \textcolor{black}{ (1+M^2) }\Big((3+n)M+|s_0|\Big)\Big)^2,
\end{split}
\]
and
\[
\begin{split}
&\Big(|s|^2+n^2-2|s_0| nM \Big)(1+m^2)^2
\\
&
>n(1+M^2)^2((3+n)M)^2
+n(1+M^2)^2s_0^2
+2n(1+M^2)^2(3+n)M|s_0|,
\end{split}
\]
which rewrites as
\[
\begin{split}
&|s|^2(1+m^2)^2+n^2(1+m^2)^2-2|s_0| nM(1+m^2)^2
\\
&
>n(1+M^2)^2((3+n)M)^2
+n(1+M^2)^2s_0^2
+2n(1+M^2)^2(3+n)M|s_0|
\end{split}
\]
and
\[
\begin{split}
&|s|^2(1+m^2)^2-n(1+M^2)^2s_0^2
-2nM\Big((3+n)(1+M^2)^2+ (1+m^2)^2 \Big)|s_0|
\\
&
>n(3+n)^2M^2(1+M^2)^2-n^2(1+m^2)^2.
\end{split}
\]
Finally we get
\[
\begin{split}
&|\underline{s}|^2(1+m^2)^2-s_0^2\Big(n(1+M^2)^2-(1+m^2)^2\Big)
-2nM\Big((3+n)(1+M^2)^2+ (1+m^2)^2 \Big)|s_0|
\\
&
>n(3+n)^2M^2(1+M^2)^2-n^2(1+m^2)^2.
\end{split}
\]

{\em Step 3}. It follows the proof given in the case of Theorem \ref{Th:Main_Hyper}.

\end{proof}

\begin{remark}\label{regionn} The region identified in the previous theorem is such that on each complex plane $\mathbb C_I$ to which $s_0+Is_1$ belongs it is the intersection between two regions $C_1$ and $C_2$. $C_1$ is defined by $|s|^2+n^2-2|s_0| nM +(1+m^2)^2>0$ which rewrites as
$$
(|s_0|- nM)^2+s_1^2>n^2(M^2-1)- (1+m^2)^2.
$$
 It is the exterior of two circumferences when $n^2(M^2-1)- (1+m^2)^2>0$ while it is the whole complex plane (except at most $(\pm nM,0)$) when $s_0+Is_1$ when $n^2(M^2-1)- (1+m^2)^2\leq 0$. One circumference has center in $(nM,0)$, radius squared $R^2=n^2(M^2-1)+(1+m^2)^2$ and belongs to the right complex plane $s_0\geq 0$; the second has center $(-nM,0)$, radius squared $R^2=n^2(M^2-1)+(1+m^2)^2$ and belongs to the left complex plane $s_0<0$.
$C_2$ is a region with boundary given by two hyperbolas: one in the half plane $s_0>0$ with center of symmetry $(-k,0)$ and one in the half plane $s_0<0$ with center of symmetry $(k,0)$ where
$$
k=\frac{nM((3+n)(1+M^2)^2+(1+m^2)^2)}{n(1+M^2)^2-(1+m^2)^2}>0.
$$
We note that we can rewrite the hyperbolas in the form
$$
(n(1+M^2)^2-(1+m^2)^2))(|s_0|+k)^2 -(1+m^2)^2s_1^2<
$$
$$(n(1+M^2)^2-(1+m^2)^2))k^2-n(3+n)^2M^2(1+M^2)^2+n^2(1+m^2)^2;
$$
easy but cumbersome computations show that
 the right hand side is positive and that the centers of symmetry satisfy the inequality. Thus the portion $P$ of the plane identified by \eqref{SECONDAPPPP} is the unbounded region intersecting the $s_1$-axis.\\
We now show that condition \eqref{super} is superfluous, in fact either it does not impose further conditions or, when $n^2(M^2-1)- (1+m^2)^2>0$ holds, is implied by \eqref{SECONDAPPPP}. To this end we show that
$P$ contains $Q:=\{(s_0,s_1)\ |\ s_1^2 \geq R^2\}$. By the above discussion we see that it is enough to show that the points in which $P$ intersects the $s_1$-axis are such that $s_1^2 \geq R^2$. This is easily seen in fact such points satisfy
$$
(1+m^2)^2s_1^2= n(3+n)^2M^2(1+M^2)^2-n^2(1+m^2)^2
$$
and we have to check if
$$
n(3+n)^2M^2\frac{(1+M^2)^2}{(1+m^2)^2}-n^2> n^2(M^2-1)-(1+m^2)^2
$$
which rewrites as
$$
n(3+n)^2M^2\frac{(1+M^2)^2}{(1+m^2)^2}+(1+m^2)^2> n^2M^2
$$
which is clearly satisfied. Thus $P$ contains $Q$ and, a fortiori, the region outside the circumferences. In this case the condition \eqref{super} is then superfluous.
\end{remark}

\section{Dirac operator on the spherical space with Robin-like boundary conditions}\label{sphericalRobin}

The natural Robin-like boundary conditions for the Dirac operator
$\mathcal{D}_S$  in  (\ref{DIREUC}) on spherical space
 are given by
\begin{equation}\label{BONDARYROBIN}
(1+|x|^2)^2\sum_{i=1}^n\nu_i\partial_{x_i}F(x)+b(x)F(x)=0 \  \ {\rm on} \ \ \partial\Omega,
\end{equation}
where we assume that the real valued function $b$ is such that $b\in L^\infty(\partial\Omega)$.
To formulate the weak solution of the equation
$$
(\mathcal{D}_S^2 - 2s_0 \mathcal{D}_S + |s|^2)F(x)=f(x) \ \ {\rm in}\ \  \Omega
$$
with boundary conditions (\ref{BONDARYROBIN})
 we integrate by parts the second order term
using the considerations in Remark \ref{INTBYPARTFORM} and the integration part formula in
(\ref{INT_BY_PARTS_PHERE}) we obtain
the  sesquilinear form
$$
q_s^R(F,G):=q_s(F,G)-\sum_{i=1}^n\Big\langle\nu_i\partial_{x_i}F,
\Big((1+|x|^2)^2G\Big)\Big\rangle_{L^2(\partial\Omega)}
$$
where $q_s(F,G)$ defined in (\ref{FORMWITHSCALPROD2}) and we assume the domain of $q_s^R$ to be
 $\textup{dom } (q_s^R):= H^1(\Omega, \mathbb{R}_n) \times H^1(\Omega, \mathbb{R}_n)$.
In this section we investigate the following problem.
\begin{problem}\label{RobinSphere}
Let $\mathcal{D}_S$ be the Dirac operator  (\ref{DIREUC}) on spherical space.
Recall that $\mathcal{D}_e:=\sum_{i=1}^ne_i\partial_{x_i}$ is the Euclidean Dirac operator as in (\ref{DIRAC})  and
$E=\sum_{i=1}^nx_i\partial_{x_i}$ is the Euler operator as in (\ref{EULERO}).
Using the scalar product $\inner{F,G}_2$ defined in (\ref{SCLAL2}) and the  sesquilinear form
$q_s(F,G)$ defined in (\ref{FORMWITHSCALPROD2}) we define the Robin  sesquilinear form
\begin{align}\label{ROBINqsfromsphere}
q_s^R(F,G) &:=q_s(F,G)+\langle b\tau_DF,\tau_DG\rangle_{L^2(\partial\Omega)}
\\
&
= \sum_{i=1}^n\inner{ (1+|x|^2)^2\partial_{x_i}F,\partial_{x_i}G}_2
+2(2+n)\inner{(1+|x|^2)EF,G}_2\nonumber
\\
&
+2\inner{(1+|x|^2)x\mathcal{D}_eF,G}_2
-2s_0 \inner{\mathcal{D}_SF, G}_2
+(|s|^2+n^2)\inner{F,G}_2\nonumber
\\
&
+\langle b\tau_DF, \tau_DG\rangle_{L^2(\partial\Omega)},\nonumber
\end{align}
with $\textup{dom } (q_s^R):= H^1(\Omega, \mathbb{R}_n) \times H^1(\Omega, \mathbb{R}_n)$ and where
the operator $\tau_{D}:H^1(\Omega)\rightarrow L^2(\partial\Omega)$
is the bounded Dirichlet trace operator, see \cite[Equation (4)]{M87}.
Show that for some values of the spectral parameter
$s\in\mathbb{R}^{n+1},$ for every $f\in L^2(\Omega, \mathbb{R}_n)$ there exists a unique solution $F_f\in H^1(\Omega, \mathbb{R}_n)$ such that
	\begin{equation*}
		q_s^R(F_f,G) = \inner{f,G}_2, \quad \text{for all} \ G\in H^1(\Omega, \mathbb{R}_n).
	\end{equation*}
	Furthermore, determine $L^2$- and $D$-estimates of $F_f$, depending on the parameter $s\in \mathbb{R}^{n+1}$.
\end{problem}

\begin{theorem} \label{ROBENTHFORTHESPHER}
		Let $\Omega\subset \mathbb{R}^{n}$ be a bounded set with smooth boundary.
Let $q_s^R(F,G)$ be the  sesquilinear form in (\ref{ROBINqsfromsphere}) associated with the Dirac operator $\mathcal{D}_S$  (\ref{DIREUC}) on spherical space with the associated Robin-type boundary conditions (\ref{BONDARYROBIN}).
Let us assume  that $\Omega$ is such that the two constants
\begin{equation*}
\inf_{x\in\Omega}|x|=m,\ \ \ \ \sup_{x\in\Omega}|x|=M
\end{equation*}
satisfy $0\leq m<M<+\infty$.
Let $s\in\BR^{n+1}$ be such that
\begin{align}\label{QULI1ROBSP}
|s|^2+n^2\textcolor{black}{-2|s_0| nM }+ (1+m^2)^2
-2^{\frac{n}{2}+1}\Vert b\Vert_{L^\infty(\partial\Omega)}\Vert\tau_D\Vert^2
>0
\end{align}
and
\begin{align}\label{QULI2RBSPHplus}
&
|s|^2\Big((1+m^2)^2-2^{\frac{n}{2}}\Vert b\Vert_{L^\infty(\partial\Omega)}\Vert\tau_D\Vert^2\Big)-n(1+M^2)^2s_0^2\nonumber
\\
&
\textcolor{black}{-2|s_0| nM }
\Big((1+m^2)^2+(3+n)(1+M^2)^2-2^{\frac{n}{2}}\Vert b\Vert_{L^\infty(\partial\Omega)}\Vert\tau_D\Vert^2
\Big)\nonumber
\\
&
>
n(1+M^2)^2(3+n)^2M^2
+(1+m^2)^22^{\frac{n}{2}}\Vert b\Vert_{L^\infty(\partial\Omega)}\Vert\tau_D\Vert^2\nonumber
\\
&
-n^2
\Big((1+m^2)^2-2^{\frac{n}{2}}\Vert b\Vert_{L^\infty(\partial\Omega)}\Vert\tau_D\Vert^2\Big)
-2^{n}\Vert b\Vert_{L^\infty(\partial\Omega)}^2\Vert\tau_D\Vert^4.
\end{align}
Then, for every $f\in L^2(\Omega)$ there exists a unique $F_f\in H^1(\Omega)$ such that
		\begin{equation}
			q^R_s(F_f,G) = \inner{f,G}_2, \qquad \text{for all } G\in H^1(\Omega).
		\end{equation}
		Moreover, this solution satisfies the estimates
		\begin{equation} \label{Eq:norm_est_hyper4}
			\|Q_s(\mathcal{D}_S)^{-1}f\|_2\leq \frac{1}{H^R_{n,m,M}(s)}\, \norm{f}_2,
		\ \ \ {\rm and}\ \
			\|Q_s(\mathcal{D}_S)^{-1}f\|_D\leq \frac{1}{H^R_{n,m,M}(s)} \,\norm{f}_2,
		\end{equation}
where
\begin{align}\label{COERCONSTROBSP}
H^R_{n,m,M}(s):&
=\frac{1}{2}(|s|^2+n^2\textcolor{black}{-2|s_0| nM })
+\frac{1}{2} (1+m^2)^2
-2^{\frac{n}{2}}\Vert b\Vert_{L^\infty(\partial\Omega)}\Vert\tau_D\Vert^2\nonumber
\\
&
-\frac{1}{2}
\sqrt{\Big((|s|^2+n^2\textcolor{black}{-2|s_0| nM })- (1+m^2)^2\Big)^2+4\Big(\sqrt{n} \textcolor{black}{ (1+M^2) }\Big((3+n)M+|s_0|\Big)\Big)^2}.
\end{align}

	\end{theorem}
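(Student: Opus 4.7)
The plan is to verify the Lax--Milgram hypotheses of Lemma \ref{lem_Lax_Milgram} for the sesquilinear form $q_s^R$ on $H^1(\Omega)\times H^1(\Omega)$ and then deduce the bounds by testing with the solution itself, exactly as in the Robin case on hyperbolic space (Theorem \ref{THFORTHEHYPERB}). The form $q_s^R$ differs from the Dirichlet form $q_s$ of Theorem \ref{THFORTHESPHER} only by the boundary term $\langle b\,\tau_D F,\tau_D G\rangle_{L^2(\partial\Omega)}$, which will be controlled uniformly by
\[
|\langle b\,\tau_D F,\tau_D G\rangle_{L^2(\partial\Omega)}|\leq 2^{n/2}\Vert b\Vert_{L^\infty(\partial\Omega)}\Vert\tau_D\Vert^2\,\Vert F\Vert_{H^1}\Vert G\Vert_{H^1},
\]
in full analogy with (\ref{ESTIM_bound_TERMTAU}). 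Continuity of $q_s^R$ then follows by combining (\ref{CONTINSPHER}) (the continuity estimate already established in Step 1 of Theorem \ref{THFORTHESPHER}, whose proof works verbatim on $H^1$ rather than $H^1_0$) with the above boundary bound.

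The main work is the coercivity of $\Sc{q_s^R}$. I start from the lower bound for $\Sc{q_s(F,F)}$ obtained in Step 2 of Theorem \ref{THFORTHESPHER}, namely inequality (\ref{COERCSTEPONESPHERE}), and subtract the absolute value of the boundary contribution, using $|\langle b\tau_D F,\tau_D F\rangle|\leq 2^{n/2}\Vert b\Vert_{L^\infty}\Vert\tau_D\Vert^2(\|F\|_D^2+\|F\|_2^2)$. This produces, after the Young inequality step with free parameter $\delta>0$, the estimate
\[
\Sc{q_s^R(F,F)}\geq \Bigl[(1+m^2)^2-2^{n/2}\Vert b\Vert_{L^\infty}\Vert\tau_D\Vert^2-\tfrac{1}{\delta}A(s_0)\Bigr]\Vert F\Vert_D^2+\Bigl[(|s|^2+n^2-2|s_0|nM)-2^{n/2}\Vert b\Vert_{L^\infty}\Vert\tau_D\Vert^2-\delta A(s_0)\Bigr]\Vert F\Vert_2^2,
\]
with $A(s_0)$ as in (\ref{AzeroSphere}). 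The boundary contribution appears additively and identically in both coefficients, so the equation determining the optimal $\delta$ coincides with (\ref{EQUA_FOR_DELTA_SPHERE_DIRIC}). Hence the optimal choice is still $\delta^+$ given by (\ref{deltaplussphere}), and the common coercivity constant is
\[
H_{n,m,M}^R(s)=H_{n,m,M}(s)-2^{n/2}\Vert b\Vert_{L^\infty(\partial\Omega)}\Vert\tau_D\Vert^2,
\]
with $H_{n,m,M}(s)$ from (\ref{COSTHNMM}), which matches (\ref{COERCONSTROBSP}).

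The remaining step is to translate $H^R_{n,m,M}(s)>0$ into the two explicit inequalities (\ref{QULI1ROBSP}) and (\ref{QULI2RBSPHplus}). I write $H^R_{n,m,M}(s)>0$ as
\[
\tfrac12\bigl(|s|^2+n^2-2|s_0|nM+(1+m^2)^2\bigr)-2^{n/2}\Vert b\Vert_{L^\infty}\Vert\tau_D\Vert^2>\tfrac12\sqrt{\bigl((|s|^2+n^2-2|s_0|nM)-(1+m^2)^2\bigr)^2+4A(s_0)^2},
\]
which is equivalent to the conjunction of the positivity of the left-hand side, giving (\ref{QULI1ROBSP}), and the inequality obtained by squaring, giving (\ref{QULI2RBSPHplus}). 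The squaring step is the delicate algebraic point: after cancelling the square $((|s|^2+n^2-2|s_0|nM)-(1+m^2)^2)^2$ one has to reorganise the remaining mixed terms containing $\Vert b\Vert_{L^\infty}\Vert\tau_D\Vert^2$ and collect $s_0$, $|s|^2$, $|\underline s|^2$ into the form stated in (\ref{QULI2RBSPHplus}); this is the main technical obstacle but is purely algebraic and parallels the computation already carried out after (\ref{QULI21}) in the hyperbolic case.

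Once coercivity with constant $H^R_{n,m,M}(s)>0$ is established, Lemma \ref{lem_Lax_Milgram} applied to the bounded right-linear functional $\varphi_f(G)=\langle f,G\rangle_{L^2}$ on $H^1(\Omega)$ produces a unique $F_f\in H^1(\Omega)$ with $q_s^R(F_f,G)=\langle f,G\rangle_2$ for all $G\in H^1(\Omega)$. Testing with $G=F_f$ and using Lemma \ref{lem_Properties} iv) gives
\[
H^R_{n,m,M}(s)\bigl(\Vert F_f\Vert_D^2+\Vert F_f\Vert_2^2\bigr)\leq \Sc{q_s^R(F_f,F_f)}\leq \Vert f\Vert_2\Vert F_f\Vert_2,
\]
and, writing $F_f=Q_s(\mathcal{D}_S)^{-1}f$ and iterating as in the final step of Theorems \ref{Th:Main_Hyper} and \ref{THFORTHEHYPERB}, one obtains both estimates in (\ref{Eq:norm_est_hyper4}), concluding the proof.
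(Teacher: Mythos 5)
Your proposal is correct and follows essentially the same line of argument as the paper: continuity and coercivity of $q_s^R$ via the boundary-term bound $|\langle b\,\tau_D F,\tau_D F\rangle|\leq 2^{n/2}\Vert b\Vert_{L^\infty}\Vert\tau_D\Vert^2(\Vert F\Vert_D^2+\Vert F\Vert_2^2)$, the observation that the boundary contribution enters both Young coefficients identically so that the optimal $\delta^+$ is unchanged from the Dirichlet case, the resulting shift $H^R_{n,m,M}(s)=H_{n,m,M}(s)-2^{n/2}\Vert b\Vert_{L^\infty}\Vert\tau_D\Vert^2$, the equivalence of $H^R_{n,m,M}(s)>0$ to the conjunction of \eqref{QULI1ROBSP} and the squared inequality \eqref{QULI2RBSPHplus}, and then Lax--Milgram plus testing with $G=F_f$. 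The only thing left implicit is the routine algebraic reorganisation that turns the squared inequality into the exact form \eqref{QULI2RBSPHplus}, which the paper carries out explicitly.
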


\begin{proof}
 We will verify that the  sesquilinear form $q_s^R$, defined in \eqref{ROBINqsfromsphere},  satisfies the assumptions of the Lemma~\ref{lem_Lax_Milgram}.
We recall that the boundary term is estimated as in \eqref{ESTIM_bound_TERMTAU} as
\begin{align}\label{BOUNDRUTERM}
|\langle b\,\tau_DF,\tau_DG\rangle_{L^2(\partial\Omega)}|&\leq 2^{\frac{n}{2}}\Vert b\Vert_{L^\infty(\partial\Omega)}\Vert\tau_D\Vert^2\Vert F\Vert_{H^1}\Vert G\Vert_{H^1}.
\end{align}
{\em Step 1.}
The continuity is established by the estimate
(\ref{CONTINHUNO})
\begin{equation}\label{CONTISPHERROBIN}
|q_s(F,G)| \leq C(s,n,M)\|F\|_{H^1}\|G\|_{H^1},
\end{equation}
 that holds for the space $H^1(\Omega)$ and
so it follows that from \eqref{ROBINqsfromsphere} and (\ref{FORMWITHSCALPROD2}) we obtain
\begin{align*}	
|q^R_s(F,G)|\leq |q_s(F,G)|&+|\langle b\,\tau_DF,\tau_DG\rangle_{L^2(\partial\Omega)}|
\\
&
\leq\Big(C(s,n,M) +2^{\frac{n}{2}}\Vert b\Vert_{L^\infty(\partial\Omega)}\Vert\tau_D\Vert^2\Big)\|F\|_{H^1}\|G\|_{H^1}.
\end{align*}

{\em Step 2.}
For the coercivity we observe that from the inequalities \eqref{Eq:Lower_bound_HyperROB} and \eqref{Eq:Lower_bound_HyperROBDUE} for $\Sc{q_s(F,F)}$
and the estimate of the boundary term in \eqref{BOUNDRUTERM} we have
\begin{align}\label{BUTOSLO}
|\langle b\,\tau_DF,\tau_DF\rangle_{L^2(\partial\Omega)}|&\leq 2^{\frac{n}{2}}\Vert b\Vert_{L^\infty(\partial\Omega)}\Vert\tau_D\Vert^2\Vert F\Vert_{H^1}^2\nonumber
\\
&
=
2^{\frac{n}{2}}\Vert b\Vert_{L^\infty(\partial\Omega)}\Vert\tau_D\Vert^2\Big[\Vert F\Vert_{D}^2+\Vert F\Vert_{L^2}^2\Big].
\end{align}
So the coercivity condition  for $q^R_s(F,F)$ becomes
follows from the coercivity of $q_s(F,F)$ given in (\ref{COERCSTEPONESPHERE}), that is

\begin{align*}
{\rm Sc}(q_s(F,F))&\geq
\Big[ (1+m^2)^2-\frac{1}{\delta}\sqrt{n} \textcolor{black}{ (1+M^2) }\Big((3+n)M+|s_0|
\Big)\Big]\|F\|^2_D
\\
&
+\Big[(|s|^2+n^2\textcolor{black}{-2|s_0| nM })-\delta\sqrt{n} \textcolor{black}{ (1+M^2) }\Big((3+n)M+|s_0|
\Big) \Big]\|F\|^2_2
\end{align*}
and from (\ref{BUTOSLO}) we get

\begin{align}	\label{Eq:Lower_bound_SPHEREROB}
\Sc{q^R_s(F,F)}&\geq \Sc{q_s(F,F)}- |\Sc{\langle b\,\tau_DF,\tau_DF\rangle_{L^2(\partial\Omega)}}|
\nonumber
\\
&
\geq
\Big[ (1+m^2)^2-\frac{1}{\delta}\sqrt{n} \textcolor{black}{ (1+M^2) }\Big((3+n)M+|s_0|
\Big)\Big]\|F\|^2_D\nonumber
\\
&
+\Big[(|s|^2+n^2\textcolor{black}{-2|s_0| nM })-\delta\sqrt{n} \textcolor{black}{ (1+M^2) }\Big((3+n)M+|s_0|
\Big) \Big]\|F\|^2_2\nonumber
 \\
 &
 -2^{\frac{n}{2}}\Vert b\Vert_{L^\infty(\partial\Omega)}\Vert\tau_D\Vert^2\Big[\Vert F\Vert_{D}^2+\Vert F\Vert_{L^2}^2\Big].
\end{align}
 We can write (\ref{Eq:Lower_bound_SPHEREROB}) as

\begin{align}	\label{ALSOEq:Lower_bound_SPHEREROB}
&\Sc{q^R_s(F,F)}\geq \Sc{q_s(F,F)}- \Sc{\langle b\,\tau_DF,\tau_DF\rangle_{L^2(\partial\Omega)}}
\nonumber
\\
&
\geq
\Big[ (1+m^2)^2-2^{\frac{n}{2}}\Vert b\Vert_{L^\infty(\partial\Omega)}\Vert\tau_D\Vert^2-\frac{1}{\delta}\sqrt{n} \textcolor{black}{ (1+M^2) }\Big((3+n)M+|s_0|
\Big)\Big]\|F\|^2_D\nonumber
\\
&
+\Big[(|s|^2+n^2\textcolor{black}{-2|s_0| nM })-2^{\frac{n}{2}}\Vert b\Vert_{L^\infty(\partial\Omega)}\Vert\tau_D\Vert^2-\delta\sqrt{n} \textcolor{black}{ (1+M^2) }\Big((3+n)M+|s_0|
\Big) \Big]\|F\|^2_2\nonumber
\end{align}
and we choose $\delta>0$ such that the two coefficients of $\norm{F}_D^2$ and $\norm{F}_2^2$
are equal. In doing so we observe that  we obtain the same relation as in
(\ref{EQUA_FOR_DELTA_SPHERE_DIRIC}) that is
\begin{equation*}
(1+m^2)^2-\frac{1}{\delta}\sqrt{n} \textcolor{black}{ (1+M^2) }\Big((3+n)M+|s_0|
\Big)=
(|s|^2+n^2\textcolor{black}{-2|s_0| nM })-\delta\sqrt{n} \textcolor{black}{ (1+M^2) }\Big((3+n)M+|s_0|
\Big)
\end{equation*}
because the terms $2^{\frac{n}{2}}\Vert b\Vert_{L^\infty(\partial\Omega)}\Vert\tau_D\Vert^2$ appear in both hand sides.
Setting $A(s_0)$ as in (\ref{AzeroSphere}) and $\delta^+$ as in (\ref{deltaplussphere}),
 the coefficient of $\norm{F}_2^2$
   is equal to the coefficient of $\norm{F}_D^2$
 and it is given by
$$
H^R_{n,m,M}(s):=(|s|^2+n^2\textcolor{black}{-2|s_0| nM })
-2^{\frac{n}{2}}\Vert b\Vert_{L^\infty(\partial\Omega)}\Vert\tau_D\Vert^2-\delta^+\sqrt{n} \textcolor{black}{ (1+M^2) }\Big((3+n)M+|s_0|
\Big)
$$
clearly in $H^R_{n,m,M}(s)$ there appear the contribution of the boundary term. Let us write it as
\begin{align*}
H^R_{n,m,M}(s):=
(|s|^2+n^2\textcolor{black}{-2|s_0| nM })
-2^{\frac{n}{2}}\Vert b\Vert_{L^\infty(\partial\Omega)}\Vert\tau_D\Vert^2
-\delta^+A(s_0)
\end{align*}
and with the substitution of $\delta^+A(s_0)$, given by (\ref{AESSEZERODELTA}) we have the expression
\begin{align*}
H^R_{n,m,M}(s):&=
|s|^2+n^2\textcolor{black}{-2|s_0| nM }-2^{\frac{n}{2}}\Vert b\Vert_{L^\infty(\partial\Omega)}\Vert\tau_D\Vert^2
-\delta^+A(s_0)
\\
&
=|s|^2+n^2\textcolor{black}{-2|s_0| nM }-2^{\frac{n}{2}}\Vert b\Vert_{L^\infty(\partial\Omega)}\Vert\tau_D\Vert^2
\\
&-
\frac{1}{2}\Big(|s|^2+n^2\textcolor{black}{-2|s_0| nM }- (1+m^2)^2+\sqrt{\Delta}\Big),
\end{align*}
so replacing the explicit value of  $\Delta$, see (\ref{HnmM}), we have the
 constant in (\ref{COERCONSTROBSP}):
\begin{align}
H^R_{n,m,M}(s):&
=\frac{1}{2}(|s|^2+n^2\textcolor{black}{-2|s_0| nM })
+\frac{1}{2} (1+m^2)^2
-2^{\frac{n}{2}}\Vert b\Vert_{L^\infty(\partial\Omega)}\Vert\tau_D\Vert^2\nonumber
\\
&
-\frac{1}{2}
\sqrt{\Big((|s|^2+n^2\textcolor{black}{-2|s_0| nM })- (1+m^2)^2\Big)^2+4\Big(\sqrt{n} \textcolor{black}{ (1+M^2) }\Big((3+n)M+|s_0|\Big)\Big)^2}.\nonumber
\end{align}
Now we have to impose that the coefficient $K^R_{n,m,M}(s)$ is positive. We observe that
 the
 inequality with the square root is always positive so the inequality
\begin{align}
&|s|^2+n^2\textcolor{black}{-2|s_0| nM }+ (1+m^2)^2
-2^{\frac{n}{2}+1}\Vert b\Vert_{L^\infty(\partial\Omega)}\Vert\tau_D\Vert^2\noindent
\\
&
>\sqrt{\Big((|s|^2+n^2\textcolor{black}{-2|s_0| nM })- (1+m^2)^2\Big)^2+4\Big(\sqrt{n} \textcolor{black}{ (1+M^2) }\Big((3+n)M+|s_0|\Big)\Big)^2}
\noindent
\end{align}
is equivalent to the system of two inequalities
\begin{align}
|s|^2+n^2\textcolor{black}{-2|s_0| nM }+ (1+m^2)^2
-2^{\frac{n}{2}+1}\Vert b\Vert_{L^\infty(\partial\Omega)}\Vert\tau_D\Vert^2
>0
\end{align}
(which is condition (\ref{QULI1ROBSP})),
and
\begin{align}\label{QULI2RBSPH}
&\Big(|s|^2+n^2\textcolor{black}{-2|s_0| nM }+ (1+m^2)^2
-2^{\frac{n}{2}+1}\Vert b\Vert_{L^\infty(\partial\Omega)}\Vert\tau_D\Vert^2\Big)^2\nonumber
\\
&
>\Big(|s|^2+n^2\textcolor{black}{-2|s_0| nM }- (1+m^2)^2\Big)^2+4\Big(\sqrt{n} \textcolor{black}{ (1+M^2) }\Big((3+n)M+|s_0|\Big)\Big)^2.
\end{align}
With some simplifications inequality (\ref{QULI2RBSPH}) becomes
\begin{align}\label{QULI2RBSPHplus1}
&
(|s|^2+n^2\textcolor{black}{-2|s_0| nM })\Big((1+m^2)^2
-2^{\frac{n}{2}}\Vert b\Vert_{L^\infty(\partial\Omega)}\Vert\tau_D\Vert^2\Big)\nonumber
\\
&
>
\Big(\sqrt{n} \textcolor{black}{ (1+M^2) }\Big((3+n)M+|s_0|\Big)\Big)^2
+(1+m^2)^22^{\frac{n}{2}}\Vert b\Vert_{L^\infty(\partial\Omega)}\Vert\tau_D\Vert^2\nonumber
\\
&
-2^{n}\Vert b\Vert_{L^\infty(\partial\Omega)}^2\Vert\tau_D\Vert^4,
\end{align}
with some computations we conclude that
\begin{align*}
&
|s|^2\Big((1+m^2)^2-2^{\frac{n}{2}}\Vert b\Vert_{L^\infty(\partial\Omega)}\Vert\tau_D\Vert^2\Big)-n(1+M^2)^2s_0^2\nonumber
\\
&
\textcolor{black}{-2|s_0| nM }
\Big((1+m^2)^2+(3+n)(1+M^2)^2-2^{\frac{n}{2}}\Vert b\Vert_{L^\infty(\partial\Omega)}\Vert\tau_D\Vert^2
\Big)\nonumber
\\
&
>
n(1+M^2)^2(3+n)^2M^2
+(1+m^2)^22^{\frac{n}{2}}\Vert b\Vert_{L^\infty(\partial\Omega)}\Vert\tau_D\Vert^2
\nonumber
\\
&
-n^2\Big((1+m^2)^2-2^{\frac{n}{2}}\Vert b\Vert_{L^\infty(\partial\Omega)}\Vert\tau_D\Vert^2\Big)
-2^{n}\Vert b\Vert_{L^\infty(\partial\Omega)}^2\Vert\tau_D\Vert^4\nonumber
\end{align*}
which is the condition (\ref{QULI2RBSPHplus}).
So the coercive estimate (\ref{ROBINqsfromsphere}) becomes
\begin{align}
\Sc{q_s^R(F,F)}  &\geq K^R_{n,m,M}(s)(\|F\|_D^2+\|F\|_2^2)
\end{align}
with coercive constant $H^R_{n,m,M}(s)$ given by  (\ref{COERCONSTROBSP}).

{\em Step 3}. The estimates (\ref{Eq:norm_est_hyper4}) are obtained as in the previous theorems.

\end{proof}

\begin{remark} By writing $s=s_0+Is_1$, the region described by \eqref{QULI1ROBSP} and \eqref{QULI2RBSPHplus} is qualitatively described by branches of conics in the left and right half plane and a case-by-case study according to the datum $b\in L^\infty(\partial\Omega)$ and the constants $m$, $M$ which depend on $\Omega$.
\end{remark}
\section{Concluding remarks}\label{sec_Concluding_remarks}

We conclude this paper with some considerations that contextualize the estimates we have derived within the framework of the $H^\infty$-functional calculus, which manifests in various settings. In the realm of hypercomplex settings, depending on the spectral theory under consideration,
this calculus takes on several distinct formulations and the estimates for sectorial and bi-sectorial operators are crucial for establishing $H^\infty$-functional calculi.

\medskip
We recall that the $H^\infty$-functional calculus for complex operators on Banach spaces, introduced by A. McIntosh \cite{McI1} and further investigated in \cite{MC10,MC97,MC06,MC98}, holds significant relevance across multiple disciplines. For more information and the developments one can consult the books \cite{Haase,HYTONBOOK1,HYTONBOOK2}. \medskip

Regarding the spectral theory on the $S$-spectrum \cite{ACS2016,AlpayColSab2020,FJBOOK,CGK,ColomboSabadiniStruppa2011,ColSab2006,JONAMEM,JONADIRECT}, which has its roots in the theory of slice hyperholomorphic functions, the $H^\infty$-functional calculus has been considered in the papers \cite{ACQS2016,CGdiffusion2018,MANTSCHLOSS}. \medskip

The spectral theory concerning the monogenic spectrum, introduced in \cite{JM}, relies on monogenic functions \cite{DSS}. Further exploration of this topic, including the $H^\infty$-functional calculus within this framework can be found in the books \cite{JBOOK,TAOBOOK}. \medskip

In recent times, a novel branch of spectral theory concerning the $S$-spectrum  has emerged in connection with the factorizations of the second mapping in the Fueter-Sce extension theorem, see \cite{ColSabStrupSce,Fueter,TaoQian1,Sce}. This branch, referred to as the fine structures on the $S$-spectrum, revolves around sets of functions characterized by integral representations, from which the corresponding functional calculi are defined. The development of this new theory and its associated functional calculi is elaborated in the papers \cite{BANJOUR,CDPS1,Fivedim,Polyf1,Polyf2}, while recent investigations into the $H^\infty$-functional calculus linked with the functional calculi of the fine structures are studied in \cite{MILANJPETER,MPS23}.

\section*{Declarations and statements}

{\bf Data availability}. The research in this paper does not imply use of data.

{\bf Conflict of interest}. The authors declare that there is no conflict of interest.


\begin{thebibliography}{99}

\bibitem{adler}
 S. Adler:
  {\em Quaternionic Quantum Mechanics and Quaternionic Quantum Fields}.
   Volume 88 of {\em International Series of Monographs on Physics}. Oxford University Press, New York
(1995).

\bibitem{Ammann}
		B. Ammann: A variational problem in conformal spin geometry. Habilitationsschrift, Universität Hamburg (2003).
		

\bibitem{ACK} D. Alpay, F. Colombo, D.P. Kimsey: \textit{The spectral theorem for quaternionic unbounded normal operators based on the $S$-spectrum}. J. Math. Phys. \textbf{57}(2), 023503, 27 p. (2016).

\bibitem{ACS2016} D. Alpay, F. Colombo, I. Sabadini: \textit{Slice hyperholomorphic Schur analysis}. Volume 256 of Operator Theory: Advances and Applications, Basel, Birkhäuser/Springer, xii, 362 p. (2016).



\bibitem{AlpayColSab2020} D. Alpay, F. Colombo, I. Sabadini: \textit{Quaternionic de Branges spaces and characteristic operator function}. SpringerBriefs in Mathematics, Springer, Cham  x, 116 p. (2020).

 \bibitem{AlpayColSab2024}
   D. Alpay, F. Colombo, I. Sabadini: \textit{
Quaternionic Hilbert spaces and slice hyperholomorphic functions}.
Operator Theory: Advances and Applications 304.
Cham: Birkhäuser, xi, 348 p. (2024).


\bibitem{ACQS2016} D. Alpay, F. Colombo, T. Qian, I. Sabadini: \textit{The $H^\infty$ functional calculus based on the $S$-spectrum for quaternionic operators and for $n$-tuples of noncommuting operators}. J. Funct. Anal. \textbf{271}(6)  1544--1584 (2016).

\bibitem{MC10} P. Auscher, A. Axelsson, A. McIntosh: \textit{On a quadratic estimate related to the Kato conjecture and boundary value problems}. Harmonic analysis and partial differential equations, Contemp. Math., 505, Amer. Math. Soc., Providence, RI  105-129 (2010).

\bibitem{MC97} P. Auscher, A. McIntosh, A. Nahmod: \textit{Holomorphic functional calculi of operators, quadratic estimates and interpolation}. Indiana Univ. Math. J. \textbf{46} 375--403 (1997).

\bibitem{MC06} A. Axelsson, S. Keith, A. McIntosh:
\textit{Quadratic estimates and functional calculi of perturbed Dirac operators}.
Invent. Math. \textbf{163}  455--497 (2006).

\bibitem{BARACCO} L. Baracco, F. Colombo, M.M. Peloso, S. Pinton: \textit{Fractional powers of higher-order vector operators on bounded and unbounded domains}. Proc. Edinb. Math. Soc. \textbf{4}(2) 65, 912-937 (2022).

\bibitem{BF} G. Birkhoff, J. von Neumann: \textit{The logic of quantum mechanics}.
Ann. of Math. (2) \textbf{37}(4)  823--843 (1936).
	
\bibitem{BREZIS}
H. Brezis: {\em Functional analysis, Sobolev spaces and partial differential equations}.
Universitext. Springer, New York,  xiv+599 p. (2011).

\bibitem{BANJOUR} F. Colombo, A. De Martino, S. Pinton: \textit{Harmonic and polyanalytic functional calculi on the $S$-spectrum for unbounded operators}. Banach J. Math. Anal. \textbf{17}, no. 4, Paper No. 84  41 p. (2023).

\bibitem{CDPS1} F. Colombo, A. De Martino, S. Pinton, I. Sabadini:
\textit{Axially harmonic functions and the harmonic functional calculus on the $S$-spectrum}.
J. Geom. Anal. \textbf{33}(2)  54 p. (2023).

\bibitem{Fivedim} F. Colombo, A. De Martino, S. Pinton, I. Sabadini: \textit{The fine structure of the spectral theory on the $S$-spectrum in dimension five}. J. Geom. Anal. \textbf{33}, no. 9, Paper No. 300  73 p. (2023).

\bibitem{ColomboDenizPinton2020} F. Colombo, D. Deniz Gonz\'alez, S. Pinton: \textit{Fractional powers of vector operators with first order boundary conditions}. J. Geom. Phys. \textbf{151}, 18, 103618 (2020).

\bibitem{ColomboDenizPinton2021} F. Colombo, D. Deniz Gonz\'alez, S. Pinton: \textit{The noncommutative fractional Fourier law in bounded and unbounded domains}. Complex Anal. Oper. Theory \textbf{15}(7), Paper No. 114, 27 (2021).

\bibitem{CGdiffusion2018} F. Colombo. J. Gantner:
\textit{An application of the $S$-functional calculus to fractional diffusion processes}.
Milan J. Math. \textbf{86}(2),  225--303 (2018).

\bibitem{FJBOOK} F. Colombo, J. Gantner:
\textit{Quaternionic closed operators, fractional powers and fractional diffusion processes}. Operator Theory: Advances and Applications \textbf{274}, Birkhäuser/Springer, Cham  viii+322 (2019).

\bibitem{CGK} F. Colombo, J. Gantner, D.P. Kimsey: \textit{Spectral theory on the $S$-spectrum for quaternionic operators}. Volume 270 of Operator Theory: Advances and Applications, Birkhäuser/Springer, Cham  ix+356 (2018).

\bibitem{ADVCGKS} F. Colombo, J. Gantner, D.P. Kimsey, I. Sabadini: \textit{Universality property of the $S$-functional calculus, noncommuting matrix variables and Clifford operators}. Adv. Math. \textbf{410}, Paper No. 108719,  39 p. (2022).

\bibitem{ColKim} F. Colombo, D.P. Kimsey: \textit{The spectral theorem for normal operators on a Clifford module}. Anal. Math. Phys., \textbf{12}(1), Paper No. 25, 92 (2022).

\bibitem{Gradient}
		F. Colombo, F. Mantovani, P. Schlosser:
{\em Spectral properties of the gradient operator with nonconstant coefficients.}
 Anal. Math. Phys. 14 , no. 5, Paper No. 108, 31 p. (2024).


\bibitem{ColomboPelosoPinton2019} F. Colombo, M.M. Peloso, S. Pinton:
\textit{The structure of the fractional powers of the noncommutative Fourier law}.
 Math. Methods Appl. Sci. \textbf{42}(18),  6259--6276 (2019).

\bibitem{MILANJPETER} F. Colombo, S. Pinton, P. Schlosser: \textit{The $H^\infty$-functional calculi for the quaternionic fine structures of Dirac type}. Milan J. Math., \textbf{92} , no. 1, 73-122 (2024).

\bibitem{ColSabStrupSce} F. Colombo, I. Sabadini, D.C. Struppa:
\textit{Michele Sce's works in hypercomplex analysis -- A translation with commentaries}. Birkh\"auser/Springer, Cham (2020).

\bibitem{ColomboSabadiniStruppa2011} F. Colombo, I. Sabadini, D.C. Struppa:
\textit{Noncommutative functional calculus. Theory and applications of slice hyperholomorphic functions}.
Volume 289 of Progress in Mathematics, Birkhäuser/Springer Basel AG, Basel vi+221 p. (2011).

\bibitem{ColSab2006} F. Colombo, I. Sabadini, D.C. Struppa: \textit{A new functional calculus for noncommuting operators}.
J. Funct. Anal. \textbf{254}(8),  2255--2274 (2008).

\bibitem{Polyf1} A. De Martino, S. Pinton: \textit{A polyanalytic functional calculus of order 2 on the $S$-spectrum}.
Proc. Amer. Math. Soc. \textbf{151},  2471--2488 (2023).

\bibitem{Polyf2} A. De Martino, S. Pinton:
\textit{Properties of a polyanalytic functional calculus on the $S$-spectrum}. Math. Nachr. \textbf{296}  5190-5226 (2023).

\bibitem{MPS23} A. De Martino, S. Pinton, P. Schlosser: \textit{The harmonic $H^\infty$-functional calculus based on the $S$-spectrum}.
J. Spectr. Theory. \textbf{14},  no. 1, 121-162 (2024).

\bibitem{DSS} R. Delanghe, F. Sommen, V. Sou\v cek: \textit{Clifford algebra and spinor-valued functions.} Mathematics and its Applications \textbf{53}, Kluwer Academic Publishers Group, Dordrecht (1992).

\bibitem{RD} N. Dunford, J. Schwartz: \textit{Linear operators, part I: general theory}. J. Wiley and Sons (1988).

\bibitem{MC98} E. Franks, A. McIntosh: \textit{Discrete quadratic estimates and holomorphic functional calculi in Banach spaces}. Bull. Austral. Math. Soc. \textbf{58},  271--290 (1998).

\bibitem{Fueter} R. Fueter: \textit{Die Funktionentheorie der Differentialgleichungen $\Delta u=0$ und $\Delta\Delta u=0$ mit vier reellen Variablen}. Comment. Math. Helv. \textbf{7}(1),  307--330 (1934).

\bibitem{JONAMEM} J. Gantner: \textit{Operator theory on one-sided quaternion linear spaces: intrinsic S-functional calculus and spectral operators.} Mem. Amer. Math. Soc. \textbf{267}, no. 1297, iii+101 p. (2020).

\bibitem{JONADIRECT}
J. Gantner: \textit{A direct approach to the $S$-functional calculus for closed operators}.
J. Operator Theory. \textbf{77} no.2, 287-331 (2017).

\bibitem{JONAQSTUD}
J. Gantner: \textit{On the equivalence of complex and quaternionic quantum mechanics}.
Quantum Stud. Math. Found. 5, No. 2, 357-390 (2018).

\bibitem{DiracHarm}
		J. E. Gilbert, M. A. M. Murray: {\em Clifford algebras and Dirac operators in harmonic analysis}. Cambridge Studies in Advanced Mathematics, 26. Cambridge University Press, Cambridge,  viii+334 p. (1991).
		

\bibitem{JBOOK} B. Jefferies: \textit{Spectral properties of noncommuting operators}.
Volume 1843, Lecture Notes in Mathematics. Springer-Verlag, Berlin (2004).

\bibitem{Haase} M. Haase: \textit{The functional calculus for sectorial operators}.
Operator Theory: Advances and Applications \textbf{169}, Birkhäuser Verlag, Basel (2006).

\bibitem{HYTONBOOK1} T. Hytönen, J. van Neerven, M. Veraar, L. Weis:
 \textit{Analysis in Banach spaces. Vol. II. Probabilistic methods and operator theory}.
  Ergebnisse der Mathematik und ihrer Grenzgebiete. 3. Folge. A Series of Modern Surveys in Mathematics \textbf{67},
  Springer, Cham  xxi+616 p. (2017).

\bibitem{HYTONBOOK2} T. Hytönen, J. van Neerven, M. Veraar, and L. Weis: \textit{Analysis in Banach spaces. Vol. I. Martingales and Littlewood-Paley theory}. Ergebnisse der Mathematik und ihrer Grenzgebiete. 3.
    Folge. A Series of Modern Surveys in Mathematics \textbf{63}, Springer, Cham xvi+614 p. (2016).

\bibitem{JM} B. Jefferies, A. McIntosh, J. Picton-Warlow: \textit{The monogenic functional calculus}.
Studia Math. \textbf{136}, 99-119 (1999).

\bibitem{Spin}
		 H. B. Lawson Jr., M. Michelsohn:
{\em  Spin geometry}. Princeton Mathematical Series, 38. Princeton University Press, Princeton, NJ,  xii+427 p. (1989).
	

\bibitem{TAOBOOK} P. Li, T. Qian: \textit{Singular integrals and Fourier theory on Lipschitz boundaries}. Science Press Beijing, Beijing, Springer, Singapore xv+306 p. (2019).

\bibitem{MANTSCHLOSS}  F. Mantovani, P. Schlosser:
{\em The $H^\infty$-functional calculus for bisectorial Clifford operators.}
To appear in Journal of Spectral Theory. arXiv:2409.07249.

\bibitem{M87} J. Marschall: \textit{The trace of Sobolev-Slobodeckij spaces on Lipschitz domains}. Manuscripta Math. \textbf{58}, 47-65 (1987).

\bibitem{McI1} A. McIntosh: \textit{Operators which have an $H^\infty$ functional calculus}. Miniconference on operator theory and partial differential equations (North Ryde, 1986), 210--231, Proc. Centre Math. Anal. Austral. Nat. Univ., 14, Austral. Nat. Univ., Canberra (1986).
\bibitem{Ni} L. Nicolaescu: \textit{Lectures on the geometry of manifolds}.  World Scientific Publishing Co. Pte. Ltd., Hackensack, NJ (2021).

\bibitem{TaoQian1} T. Qian: \textit{Generalization of Fueter's result to $\textbf{R}^{n+1}$}. Atti Accad. Naz. Lincei Cl. Sci. Fis. Mat. Natur. Rend. Lincei (9) Mat. Appl. \textbf{8}(2), 111--117 (1997).

\bibitem{Sce} M. Sce: \textit{Osservazioni sulle serie di potenze nei moduli quadratici}. Atti Accad. Naz. Lincei Rend. Cl. Sci. Fis. Mat. Nat. \textbf{23}(8), 220--225 (1957).




\end{thebibliography}
\end{document}